\newcommand{\BIGOP}[1]{\mathop{\mathchoice%
{\raise-0.22em\hbox{\huge $#1$}}%
{\raise-0.05em\hbox{\Large $#1$}}{\hbox{\large $#1$}}{#1}}}
\newcommand{\bigtimes}{\BIGOP{\times}}
\newcommand{\BIGboxplus}{\mathop{\mathchoice%
{\raise-0.35em\hbox{\huge $\boxplus$}}%
{\raise-0.15em\hbox{\Large $\boxplus$}}{\hbox{\large $\boxplus$}}{\boxplus}}}
\theoremstyle{definition}
\numberwithin{equation}{section}
\newtheorem{thm}{Theorem}[section]
\newtheorem{rem}[thm]{Remark}
\newtheorem{lem}[thm]{Lemma}
\newtheorem{prop}[thm]{Proposition}
\newtheorem{cor}[thm]{Corollary}
\newtheorem{defi}[thm]{Definition}
\newcommand{\cf}{\ensuremath{\mathcal F}}
\newcommand{\tor}{\ensuremath{\mathbb{T}^d}}
\newcommand{\Z}{\mathbb{Z}}
\newcommand{\N}{\mathbb{N}}
\newcommand{\R}{\mathbb{R}}
\newcommand{\C}{\mathbb{C}}
\newcommand{\abss}[1]{\big\vert #1 \big\vert}	
\renewcommand{\a}{\alpha}
\renewcommand{\phi}{\varphi}
\newcommand{\eps}{\varepsilon}
\newcommand{\wt}{\widetilde}
\newcommand\dint{{\,\rm d}}
\newcommand{\supp}{{\rm supp}}
\renewcommand{\S}{\mathcal{S}}
\newcommand{\F}{\mathcal{F}}
\newcommand{\Fd}{\mathbf{F}_d}
\newcommand{\Bspt}{\ensuremath{\mathbf{B}^s_{p,\theta}}}
\newcommand{\Fspt}{\ensuremath{\mathbf{F}^s_{p,\theta}}}
\newcommand{\Fsptw}{\ensuremath{\mathbf{F}^s_{p,2}}}
\newcommand{\Wsp}{\ensuremath{\mathbf{W}^s_p}}
\newcommand{\Bo}{\ensuremath{\mathring{\mathbf B}_{p,\theta}^s}}
\newcommand{\Fo}{\ensuremath{\mathring{\mathbf F}_{p,\theta}^s}}
\newcommand{\Wo}{\ensuremath{\mathring{\mathbf W}_p^s}}
\newcommand{\A}{\ensuremath{\mathbf{A}}}
\newcommand{\B}{\ensuremath{\mathbf{B}}}
\newcommand{\Aspt}{\ensuremath{\mathbf{A}^s_{p,\theta}}}
\newcommand{\Ao}{\ensuremath{\mathring{\mathbf A}_{p,\theta}^s}}
\newcommand{\bproof}{\begin{proof}}
\newcommand{\eproof}{\end{proof}}
\newcommand{\be}{\begin{equation}}
\newcommand{\ee}{\end{equation}}
\newcommand{\beq}{\begin{eqnarray}}
\newcommand{\beqq}{\begin{eqnarray*}}
\newcommand{\eeq}{\end{eqnarray}}
\newcommand{\eeqq}{\end{eqnarray*}}
\newcommand{\bes}{\begin{split}}
\newcommand{\es}{\end{split}}
\title{Change of variable in spaces of mixed smoothness and numerical integration of multivariate functions on the
unit cube} 
\author{ 
Van Kien Nguyen
\and
Mario Ullrich
\and 
Tino Ullrich
}
\begin{document}
\maketitle  
 
\begin{abstract} 
In a recent article by two of the present authors it turned out that Frolov's cubature
formulae are optimal and universal for various settings (Besov-Triebel-Lizorkin spaces) of functions with dominating
mixed smoothness. Those cubature formulae go well together with functions supported inside the unit cube
$[0,1]^d$. The question for the optimal numerical integration of multivariate functions with non-trivial boundary data,
in particular non-periodic functions, arises. In this paper we give a general result that the asymptotic rate of the
minimal worst-case integration error is not affected by
boundary conditions in the above mentioned spaces. In fact, we propose two tailored modifications of Frolov's cubature
formulae suitable for functions supported on the cube (not in the cube) which provide the same minimal
worst-case error
up to a constant. This constant involves the norms of a ``change of variable'' and a ``pointwise multiplication''
mapping, respectively, between the function spaces of
interest. In fact, we complement, extend and improve classical results by Bykovskii, Dubinin and Temlyakov on the
boundedness of change of variable mappings in Besov-Sobolev spaces of mixed smoothness. Our proof technique relies
on a new characterization via integral means of mixed differences and maximal function
techniques, general enough to treat Besov and Triebel-Lizorkin spaces at once. 
The second modification, which only tackles the case of periodic functions, 
is based on a pointwise multiplication and is 
therefore most likely more suitable for applications than the (traditional) 
``change of variable'' approach.
These new theoretical insights are expected to be useful for the design of new (and robust) 
cubature rules for multivariate functions on the cube. 
\end{abstract} 
%

\section{Introduction}
The seminal papers by Korobov \cite{Ko59}, Hlawka \cite{Hl62} and Bakhvalov \cite{Ba63} started the research
for the efficient computation of multivariate integrals $\int_{\Omega}f(x)\,dx$ via cubature formulae of type
\begin{equation}\label{f01}
      Q_n(f) \,:=\, \sum\limits_{i=1}^n \lambda_if(x^i)\,,
\end{equation}
where $X_n:=\{x^1,...,x^n\} \subset \Omega \subset \R^d$ denotes the set of given integration nodes and
$(\lambda_1,...,\lambda_n)$ denotes the vector of integration weights. This field developed into several directions and
attracted a lot of interest in the past 50 years until present, see Temlyakov \cite{Te86,Te90,Te91,Te03}, Dubinin
\cite{Du, Du2}, Skriganov~\cite{Sk94}, Triebel~\cite{Tr10}, Hinrichs et al.\
\cite{Hi10,HiMaOeUl14}, Novak and Wo\'zniakowski~\cite{NW10}, Novak, Krieg \cite{KrNo15},  
Dick and Pillichshammer~\cite{DP}, D\~ung, Ullrich \cite{DU14}, and Markhasin~\cite{Ma13} to mention just a few.
However, several fundamental problems, for instance the construction of optimal (and easy to generate) point sets $X_n$
in \eqref{f01} is still subject of intense research. Frolov's construction \cite{Fr76} dates back to the 1970s and
gives a relatively easy construction of such point sets which perform optimal for several function classes of mixed
smoothness. The recent results in \cite{UU-frolov} will be the starting point for the investigations in this
paper. 

The concept of {\em dominating mixed smoothness} does not only connect discrepancy theory and
optimal numerical integration. Function classes built upon this 
type of multivariate regularity also play an important role in many real-world problems. Several applications are
modeled in Sobolev spaces with dominating mixed smoothness $H^s_{\text{mix}}$, see for instance Yserentant's book
\cite{Y2010} for regularity properties of solutions of the electronic Schr\"odinger equation. However, the Hilbert space
model is not sufficient when it comes to numerical integration issues. We observed in \cite{UU-frolov, HiMaOeUl14} that
it seems to be more appropriate to define smoothness function classes on $L_p$ instead of $L_2$. The case $p=1$ turns out to
be particularly important. To see this let us consider multi-variate {\em kink functions} which often occur in
mathematical finance, e.g., the pricing of a European call option, whose pay-off function possesses a kink at the strike
price~\cite{glasserman2004monte}. 
In general, one can not expect Sobolev regularity higher than $s=3/2$. 
However, when considering Besov regularity we can achieve smoothness $s=2$. 
In a sense, one sacrifices integrability for gaining regularity. Looking at the error bounds in \cite{UU-frolov,
HiMaOeUl14} one observes that certain cubature rules benefit from higher Besov regularity while the integrability $p$
does not enter the picture. 

In the recent paper \cite{UU-frolov} two of the present authors studied the performance of the classical Frolov
cubature formulae \cite{Fr76, MU14} in several settings of classes with dominating mixed smoothness. It
turned out that in
Besov ($\Bo$) and Triebel-Lizorkin ($\Fo$) spaces with dominating mixed smoothness $\mathbf{F}_d := \Ao$, where $\mathbf{A} \in
\{\mathbf{B},\mathbf{F}\}$, of functions supported strictly inside the unit cube $\Omega:=[0,1]^d$, the above mentioned
cubature
formulae provide the optimal rate of convergence among all cubature formulae of type \eqref{f01}
with respect to the minimal worst-case integration error given by 
\begin{equation}\label{eq:minimal}
  \mbox{Int}_n(\Fd) \,:=\, \inf_{Q_n}\,
	e(Q_n,\Fd)\,,\quad n\in \N\,,
\end{equation}
where the infimum is taken over all cubature formulae of the form \eqref{f01} 
and 
\[
e(Q_n,\Fd) \,:=\, \sup\limits_{\|f\|_{\mathbf{F}_d} \leq 1}
	|I(f)-Q_n(f)|\,,\quad n\in \N\,,
\]
for a class $\mathbf{F}_d \subset C(\R^d)$ with $I(f) := \int_{[0,1]^d} f(x)\,dx$.
However, Frolov's method does not seem to be suitable for functions with
non-trivial boundary data, like
multivariate periodic functions $\mathbf{F}_d:=\Aspt(\mathbb{T}^d)$ or non-periodic functions on the cube
$\mathbf{F}_d:=\Aspt([0,1]^d)$, cf.\ Definitions \ref{def:besov}, \ref{def:TL} below. The case of periodic Besov
spaces of dominating mixed smoothness has been studied in \cite{Du, Du2, DU14, HiMaOeUl14}. What concerns optimal
cubature in non-periodic function classes, which are actually most relevant for applications, there are only partial
results known, see for instance \cite{Tr10, DU14} for the correct lower bounds of $\mbox{Int}_n(\Bspt)$. 

A classical approach towards upper bounds has
been proposed by Bykovskii \cite{By85} by first performing a change of
variable to obtain
\begin{equation}\label{f02}
  \int_{[0,1]^d}f(x)\,dx = \int_{[0,1]^d} |\det \psi'(x)| f(\psi(x)) \,dx
\end{equation}
for some differentiable $\psi:[0,1]^d\mapsto[0,1]^d$, 
and use afterwards a cubature formula \eqref{f01} for the right-hand integrand in
\eqref{f02}. The main observation is the fact that this approach results in a modified cubature formula 
\begin{equation}\label{f03}
    Q_n^\psi(f) \,:=\, Q_n\bigl(|\det \psi'|\cdot f\circ\psi\bigr) 
		\,=\, \sum\limits_{i=1}^n \lambda_i\, |\det\psi'(x^i)|\, f(\psi(x^i))\,.
\end{equation}
Arranging the kernel $\psi$ in a way such that the function $|\det \psi'(x)| f(\psi(x))$ preserves mixed smoothness
properties and is supported in $[0,1]^d$ one can now use the above mentioned (optimal) methods to approximate the
integral. Thus, the remaining step in analyzing the method \eqref{f03} is to prove the preservation of mixed
smoothness properties under change of variable, or in other words, to establish the boundedness of mappings 
\begin{equation}\label{f03-1}
 \begin{split}
    T_d^{\psi}:\Aspt &\to \Aspt \\
    f &\mapsto |\det \psi'(x)|f(\psi(x))\,,
 \end{split}
\end{equation}
for suitably chosen $\psi(x)$. A straight-forward choice is certainly given by assuming a tensor product structure. 
That is, e.g., given univariate functions $\psi:\R\to \R$ which are
integrated $C^k(\R)$ bump functions $\varphi$ supported in $[0,1]$ and, 
with a slight abuse of notation, 
define $\psi(x_1,...,x_d):=\psi(x_1)\cdot...\cdot \psi(x_d)$. A natural and simple choice is given by the family of polynomials
\begin{equation}\label{psin}
    \psi_k(t) = \left\{\begin{array}{rcl}
                             \int_0^t \xi^k(1-\xi)^k\,d\xi / \int_0^1 \xi^k(1-\xi)^k \,d\xi &:& t\in [0,1],\\
                             1&:& t>1,\\
                             0&:& t<0\,,
                          \end{array}\right.
\end{equation}
$k\in \N$ and we define 
\begin{equation}\label{tensorpsi}
\psi_k(x) \,=\, \psi_k(x_1)\cdot...\cdot \psi_k(x_d).
\end{equation}
It has been proved by Bykovskii \cite{By85} that $T_d^{\psi_k}$ is bounded in
the Sobolev space with dominating mixed smoothness $\mathbf{W}^s_2$ for $s\in \N$ if $m\geq 2\lfloor s \rfloor+1$ (note
that this
is the same space as $H^s_{\text{mix}}$ mentioned above). This result
has been extended by Temlyakov, see \cite[Thm.\ IV.4.1]{Te93}, to Sobolev spaces $\mathbf{W}^s_p$, $s\in \N$,
$1<p<\infty$, under the condition 
\begin{equation}\label{expl}
      k\geq \Big\lfloor \frac{sp}{p-1}\Big \rfloor+1\,.
\end{equation}
Under the same condition, Temlyakov \cite[Lem.\ IV.4.9]{Te93} showed the boundedness of
$T^{\psi_k}_d:\mathbf{B}^s_{p,\infty}\to \mathbf{B}^s_{p,\infty}$ in the H\"older-Nikol'skij spaces if
$1<p\leq \infty$
and $s>1$. 
Later this was extended by Dubinin \cite{Du, Du2} to the Besov spaces $\Bspt$ if 
$1\leq p,\theta \leq\infty$ and $s>1/p$\,, and by Temlyakov~\cite{Te03} to the spaces 
$\mathbf{W}^s_p$, $s\in \N$, $1\le p\le\infty$. Dubinin and Temlyakov analyzed the change of variable with a
$C^{\infty}$-kernel instead of \eqref{psin}, namely 
\begin{equation}\label{f05_1}
    \psi(t) = \left\{\begin{array}{rcl}
                             \int_0^t e^{-\frac{1}{\xi(1-\xi)}}\,d\xi / \int_0^1 e^{-\frac{1}{\xi(1-\xi)}} \,d\xi &:& t\in [0,1],\\
                             1&:& t>1,\\
                             0&:& t<0\,.
                          \end{array}\right.
\end{equation}
With this particular choice both authors were able to incorporate even the important case $p=1$ (see the discussion
above). Note, that the boundedness is still open for $\mathbf{F}^s_{1,\theta}$, see
Remark~\ref{s>1mixed}, (iii) below. However, the kernel in \eqref{f05_1} has some obvious disadvantages. It involves
terms which get very small and others that get very large at the same time. Therefore, motivated from
numerical robustness issues one is interested in as simple as possible change of variable kernels with the least
(smoothness) requirements. The functions in \eqref{psin} are polynomials and therefore good for our purpose. 
However, from a \emph{tractability} point of view, i.e., if one is interested in the dependence 
of the involved constants on the dimension, every method of this type will be most likely 
intractable. Some results on this issue in the Hilbert space setting can be found in 
Kuo, Sloan and Wo\'zniakowski~\cite{KSW07}, see also Nuyens and Cools~\cite{NC10} for 
numerical experiments.

Our main intention is to extend the mentioned boundedness results on the operators $T^{\psi_k}_d$ to spaces
$\Aspt$, in particular Triebel-Lizorkin spaces of
dominating mixed smoothness $\Fspt$, for the largest possible range of parameters.

\subsection{Contribution and main results} 
The main goal of the present paper is to construct cubature formulae that

\begin{itemize}
  \item are simple, (to some extent) stable and universal (in the sense that the 
  algorithm does not depend on the specific parameters of the spaces)
  \item and that perform optimal in the sense of the minimal worst-case error \eqref{eq:minimal} with respect to
   non-periodic function spaces $\Aspt$ on the unit $d$-cube.
\end{itemize}
The ingredient \eqref{psin} is almost what we want to put into \eqref{f03}, provided we can get rid of the condition
\eqref{expl}. This is possible as we will show in the theorems below. We are able to state the result for spaces
$\Aspt$ on the unit
$d$-cube, where the smoothness (or the parameter $k$ in \eqref{psin}) only depends on the smoothness $s$ of the space
(and not on the integrability $p$). Note, that the result below is not just a
generalization of the results from \cite{By85,Du2,Te93,Te03}. It essentially improves on the stated classical
results for $\Wsp$ and $\mathbf{B}^s_{p,\theta}$ what concerns the choice of the change of variable kernel, which is
an essential ingredient for the resulting modified cubature formula. 

An important special case of our main result in 
Theorem~\ref{changemixed-B} below gives the desired result for the Besov spaces 
$\Bspt$ of dominating mixed smoothness. 

\begin{thm}\label{t2} Let $\Omega=[0,1]^d$, $1 \leq p \leq \infty$, $0< \theta \leq \infty$ 
and $s>0$. Let further $k>\lfloor s\rfloor+2$ ($k>\lfloor s\rfloor+3$ if $p=1$) 
and $\psi_k$ as in \eqref{tensorpsi}. Then,
\begin{itemize}
\item[(i)] $T_d^{\psi_k}:\Bspt \to \Bspt$ is a bounded mapping, and 
\item[(ii)] provided that $\Bspt\subset C(\R^d)$, a corresponding modified cubature formula
\eqref{f03} on $\Bspt$ does not perform asymptotically worse than \eqref{f01} performs on $\Bo$, i.e.,
\[
e(Q_n^{\psi_k},\Bspt) \,\lesssim\, e(Q_n,\Bo)\,,\quad n\in \N\,.
\]
\end{itemize}
\end{thm}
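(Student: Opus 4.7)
My plan is to derive (ii) as an immediate corollary of (i) and to focus the real work on (i). For (ii), the change-of-variable identity \eqref{f02} gives $I(f)=I(g)$ for $g:=T_d^{\psi_k}f$; since $\psi_k'(t)=c_k^{-1}t^k(1-t)^k$ vanishes to order $k$ at $t\in\{0,1\}$, the function $g$ is supported in $[0,1]^d$ and therefore belongs to $\Bo$, so part (i) yields
\[
|I(f)-Q_n^{\psi_k}(f)|=|I(g)-Q_n(g)|\le e(Q_n,\Bo)\,\|g\|_{\Bspt}\lesssim e(Q_n,\Bo)\,\|f\|_{\Bspt}.
\]

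For (i) I would work with an integral-mean characterization of $\|g\|_{\Bspt}$ in terms of mixed moduli of smoothness $\omega_{\bar m}(g,\bar t)_p$ of order $m_j=\lfloor s\rfloor+1$ in each coordinate --- the very characterization flagged in the abstract, designed to treat Besov and Triebel--Lizorkin scales uniformly and to play well with non-linear substitutions. The tensor factorization
\[
T_d^{\psi_k}f(x)=\prod_{j=1}^d\psi_k'(x_j)\cdot f\bigl(\psi_k(x_1),\ldots,\psi_k(x_d)\bigr)
\]
lets one iterate a one-variable analysis coordinate by coordinate. In direction $j$, a discrete Leibniz expansion of $\Delta_{h_j e_j}^{m_j}$ applied to the product $\psi_k'(x_j)\cdot(f\circ\psi_k)(x)$ decomposes the difference into a main term in which all $m_j$ differences fall on $f\circ\psi_k$ (multiplied by the undifferenced factor $\psi_k'(x_j)$), plus remainder terms in which at least one difference is absorbed by $\psi_k'$ and thus yields an additional factor $|h_j|$ through Taylor expansion of $\psi_k'\in C^{k-1}$. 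For the main term, the substitution $u_j=\psi_k(x_j)$ converts the non-uniformly sampled values of $f\circ\psi_k$ in $x_j$ into genuine $m_j$-th differences of $f$ in $u_j$ at scale $\sim\psi_k'(x_j)\,|h_j|$, via a one-dimensional maximal function argument that absorbs the non-uniform spacing. The Jacobian of the substitution is compensated by the surviving factor $\psi_k'(x_j)$ from the Leibniz main term, and the excess vanishing of $\psi_k'\sim\mathrm{dist}(\cdot,\{0,1\})^k$ provides the integrability needed near $\partial[0,1]^d$; a careful bookkeeping of the orders of vanishing against the $m_j$ differences pins down the assumption $k>\lfloor s\rfloor+2$.

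The main technical obstacle will be the endpoint $p=1$. There the coordinate-wise Hardy--Littlewood maximal function used to transfer sampled differences of $f\circ\psi_k$ to genuine differences of $f$ is not bounded on $L_1$, so the Fefferman--Stein-style argument of the previous paragraph breaks down. The natural remedy --- and the source of the stronger assumption $k>\lfloor s\rfloor+3$ in the $p=1$ case --- is to trade one extra power of the vanishing factor $\psi_k'$ for an additional Taylor step that replaces the maximal function by a pointwise averaged bound, which restores boundedness at the cost of one more order of vanishing of $\psi_k'$. A final minor verification is that $g=T_d^{\psi_k}f$ lies in $C(\R^d)$ whenever $\Bspt\hookrightarrow C(\R^d)$, so that the sampled cubature formula in (ii) is well defined; this is immediate from continuity of $\psi_k$ on $\R$ and of $f$.
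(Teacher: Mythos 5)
Your reduction of (ii) to (i) is correct and matches the paper: $I(f)=I(T_d^{\psi_k}f)$, the image $g=T_d^{\psi_k}f$ is supported in $[0,1]^d$ so $g\in\Bo$, and the worst-case bound follows directly from boundedness of $T_d^{\psi_k}$. For (i), however, your plan diverges from the paper's proof in essential ways, and the step you gloss over is exactly where the difficulty lies.

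The paper first applies a Littlewood--Paley decomposition $f=\sum_{\ell}f_{j+\ell}$ into frequency-localized blocks before touching the differences, and then splits the sum over $\ell\gtrless 0$. The reason this is not cosmetic: your ``main term'' step claims that the substitution $u_j=\psi_k(x_j)$ converts $\Delta_{h_j}^{m}(f\circ\psi_k)$ into a genuine $m$-th difference of $f$ ``at scale $\sim\psi_k'(x_j)|h_j|$, via a one-dimensional maximal function argument that absorbs the non-uniform spacing.'' But the sampled points $\psi_k(x_j),\psi_k(x_j+h_j),\dots,\psi_k(x_j+mh_j)$ are not equally spaced, and there is no elementary maximal-function device that turns a divided difference at irregular nodes into an $m$-th finite difference of $f$ at scale $\psi_k'(x_j)h_j$ while preserving the cancellation needed for the smoothness gain. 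The paper circumvents this entirely: for the ``low-frequency'' pieces ($\ell<0$) it uses the B-spline convolution representation to replace $\Delta_h^m$ by $h^m$ times an $m$-th \emph{derivative}, then applies the classical Leibniz rule to the product $\varphi\cdot(f_{j+\ell}\circ\psi)$, and controls the resulting quotients $|\varphi^{(r)}\varphi^{(\alpha)}|/|\varphi|^{1/p}$ by the key technical Lemma~\ref{lem:Linfty}/Corollary~\ref{cor:Linfty}; the pointwise evaluation of the band-limited factor $f_{j+\ell}$ at the shifted argument is then handled by the Peetre maximal function, which is specifically tailored to band-limited functions and is not available for general $f$. For the ``high-frequency'' pieces ($\ell\geq 0$) no difference gain is needed and one uses the crude bound plus the change of variable on the Jacobian factor. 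Your proposal omits the frequency decomposition, omits the derivative-vs-difference conversion, and omits the quotient control on $\varphi$ --- the last being the mechanism by which the exponent $k$ enters, so without it the hypothesis $k>\lfloor s\rfloor+2$ cannot be pinned down.

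Your explanation of the extra ``$+1$'' at $p=1$ is also not quite right. In the paper's Step 3 for $p=1$ the Hardy--Littlewood maximal function is avoided by integrating the B-spline kernel directly (using $\int_{\R} h^{-1}M_m(h^{-1}y)\,dy=1$); the stricter requirement $m=\lfloor s\rfloor+2$ comes from the Peetre maximal inequality, which requires an exponent $a>1/p=1$, combined with the scaling bound $|f_{j+\ell}^{(\gamma)}|\lesssim 2^{(j+\ell)m}P_{\cdot,a}f_{j+\ell}$ which requires $a\le m$; hence $m>1$ is forced even for small $s$. This is acknowledged in Remark~\ref{rem:p1}, which also notes the condition can be relaxed when $s\geq 1$.
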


\noindent
The more restrictive condition in the case $p=1$ is actually not necessary, see Remark~\ref{rem:p1}, 
but a proof of this statement would elongate the paper by some pages, so we leave it out.

An analogous results 
for the Triebel-Lizorkin spaces of dominating mixed
smoothness $\mathbf{F}^s_{p,\theta}$ follows from Theorem~\ref{changemixed-F} below. 
Note that we have to exclude the corner points $p=1,\infty$. 

\begin{thm}\label{t1} Let $\Omega=[0,1]^d$, $1<p<\infty$, $1 < \theta\le \infty$ and $s>0$. 
Let further $k>\lfloor s \rfloor+2$ and
$\psi_k$ as in \eqref{tensorpsi}. Then,
\begin{itemize}
\item[(i)] $T_d^{\psi_k}:\Fspt \to \Fspt$ is a bounded mapping, and
\item[(ii)] provided that $\Fspt\subset C(\R^d)$, a corresponding modified cubature formula
\eqref{f03} on $\Fspt$ does not perform asymptotically worse than \eqref{f01} performs on $\Fo$, i.e.,
\[
e(Q_n^{\psi_k},\Fspt) \,\lesssim_{}\, e(Q_n,\Fo)\,,\quad n\in \N\,.
\]
\end{itemize}
\end{thm}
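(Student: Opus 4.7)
The plan is to prove part (i) via an equivalent norm on $\Fspt$ built from integral means of mixed differences of order $m:=\lfloor s\rfloor+1$, a characterization announced in the abstract and developed earlier in the paper precisely so as to treat Besov and Triebel--Lizorkin scales simultaneously. Schematically,
\[
\|f\|_{\Fspt} \asymp \|f\|_{L_p(\R^d)} + \Bigl\|\Bigl(\sum_{j\in\N_0^d} 2^{\theta s|j|_1}|I_j f|^\theta\Bigr)^{1/\theta}\Bigr\|_{L_p(\R^d)},
\]
where $I_j f(x)$ denotes the average over $h\in[0,1]^d$ of $|\Delta^m_{2^{-j}\odot h} f(x)|$ and $\Delta^m_h$ is the $d$-fold iterated one-directional difference of order $m$.

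Applying this norm to $g:=T_d^{\psi_k}f$ proceeds in two steps. First, the $d$-fold mixed difference of the product $|\det\psi_k'|\cdot(f\circ\psi_k)$ is expanded by a Leibniz-type formula into a finite sum of products of shifted mixed differences of each factor; the hypothesis $k>\lfloor s\rfloor+2$ ensures $|\det\psi_k'|$ has enough classical smoothness together with boundary vanishing to make its mixed differences uniformly bounded multipliers. Second, a chain-rule argument reduces mixed differences of $f\circ\psi_k$ at step $2^{-j}\odot h$ to mixed differences of $f$ at an effectively smaller step $\psi_k'(x)\odot(2^{-j}\odot h)$, evaluated near $\psi_k(x)$. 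Since $\psi_k'$ takes values in $[0,1]$, any such effective step can be absorbed into a dyadic scale via the iterated one-dimensional Hardy--Littlewood maximal operator $M_{\mathrm{mix}}$, yielding
\[
|\Delta^m_{2^{-j}\odot h}(f\circ\psi_k)(x)| \;\lesssim\; \bigl(M_{\mathrm{mix}}|\Delta^m_{2^{-j}\odot h'} f|\bigr)(\psi_k(x))
\]
for some $h'\in[0,1]^d$.

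The main obstacle is pushing $M_{\mathrm{mix}}$ outside the $\ell^\theta(L_p)$-structure of the equivalent norm. This requires the vector-valued Fefferman--Stein type inequality
\[
\Bigl\|\Bigl(\sum_{j\in\N_0^d}|M_{\mathrm{mix}}f_j|^\theta\Bigr)^{1/\theta}\Bigr\|_{L_p(\R^d)}\;\lesssim\;\Bigl\|\Bigl(\sum_{j\in\N_0^d}|f_j|^\theta\Bigr)^{1/\theta}\Bigr\|_{L_p(\R^d)},
\]
which holds precisely in the range $1<p<\infty$ and $1<\theta\leq\infty$; this necessitates the exclusion of the corner cases $p\in\{1,\infty\}$ and $\theta=1$ from the theorem's hypotheses (hence the split with the Besov Theorem~\ref{t2}, where a direct $L_p$ estimate bypasses the vector-valued step). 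A secondary, purely combinatorial difficulty is the bookkeeping of the $d$-variate Leibniz expansion, whose constants depend on $d$ but remain finite for fixed dimension. After this step, a change of variable $y=\psi_k(x)$ (legitimate because the remaining Jacobian is uniformly bounded on the support) returns the norm $\|f\|_{\Fspt}$ on the right-hand side.

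For part (ii), $\psi_k:[0,1]^d\to[0,1]^d$ is a bijection whose Jacobian vanishes to order $k$ on $\partial[0,1]^d$, so $T_d^{\psi_k}f$ extended by zero outside $[0,1]^d$ is supported in the closed cube and lies in $\Fo$ by part (i). The change-of-variable identity \eqref{f02} then yields $I(f)=I(T_d^{\psi_k}f)$, hence
\[
|I(f)-Q_n^{\psi_k}(f)| \;=\; |I(T_d^{\psi_k}f)-Q_n(T_d^{\psi_k}f)| \;\leq\; e(Q_n,\Fo)\cdot\|T_d^{\psi_k}f\|_{\Fo}.
\]
Combining with $\|T_d^{\psi_k}f\|_{\Fo}\lesssim\|f\|_{\Fspt}$ from part (i) and taking the supremum over the unit ball of $\Fspt$ yields the claim.
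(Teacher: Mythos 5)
Your overall scaffolding is aligned with the paper: both use the characterization by rectangular means of mixed differences (Theorems~\ref{diff}, \ref{fdiff}), a Leibniz-type expansion, vector-valued Fefferman--Stein/Peetre maximal inequalities (hence the restriction $1<p<\infty$, $1<\theta\le\infty$), and for part~(ii) the identity $I(f)=I(T_d^{\psi_k}f)$ plus an elementary worst-case argument. However, there are two genuine gaps, the second of which is fatal as written.

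First, your ``chain-rule'' claim
\[
|\Delta^m_{2^{-j}\odot h}(f\circ\psi_k)(x)| \;\lesssim\; \bigl(M_{\mathrm{mix}}|\Delta^m_{2^{-j}\odot h'} f|\bigr)(\psi_k(x))
\]
is not a valid reduction: the points $\psi_k(x),\psi_k(x+h),\dots,\psi_k(x+mh)$ are not equally spaced, so the left-hand side is not an $m$th iterated difference of $f$. The paper circumvents this by decomposing $f=\sum_{\ell}f_{j+\ell}$ into Littlewood--Paley pieces and bounding $|\Delta^m_h(g,x)|\lesssim |h|^m M[g^{(m)}](x)$ (equation~\eqref{bm}), then applying Leibniz/Fa\`a di Bruno to $(T_1^\psi f_{j+\ell})^{(m)}$ and using the Peetre maximal function for each piece; you do not mention the Littlewood--Paley decomposition at all, yet the split $\ell<0$ vs.~$\ell\ge 0$ is structurally necessary in the paper's argument.

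Second, and more seriously, your statement that the change of variable $y=\psi_k(x)$ is ``legitimate because the remaining Jacobian is uniformly bounded on the support'' is false: $\det\psi_k'(x)=\prod_i\varphi(x_i)$ with $\varphi(0)=\varphi(1)=0$, so the Jacobian degenerates on $\partial[0,1]^d$ and after changing variables in $L_p$ you pick up the unbounded weight $|\det\psi_k'|^{-1/p}$. Controlling that degeneracy is exactly the heart of the proof and explains where $k>\lfloor s\rfloor+2$ really comes from: the Leibniz/Fa\`a di Bruno expansion produces factors $\varphi^{(\alpha)}(x_i)Q_\gamma(x_i)$, and Lemma~\ref{lem:Linfty}/Corollary~\ref{cor:Linfty} show these are dominated by $|\varphi(x_i)|^{1/p}$, precisely cancelling the weight. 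Without that cancellation argument the change of variable step does not close, so the proof as written does not establish the $L_p$ bound.
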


\noindent These results are useful in the sense that they ``transfer'' the optimal rate of the minimal worst-case
error from the known situation $\Ao$ to the more difficult situation $\Aspt$. As a direct consequence we obtain the
equivalence
\begin{equation*}
    \mbox{Int}_n(\mathring{\mathbf A}_{p,\theta}^s) 
		\asymp \mbox{Int}_n({\mathbf A}_{p,\theta}^s(\mathbb{T}^d))
		\asymp \mbox{Int}_n({\mathbf A}_{p,\theta}^s([0,1]^d)) 
		\,,\quad n\in \N\,,
\end{equation*}
in the range of parameters given in Theorems \ref{t2}, \ref{t1} whenever the classes $\Aspt$ consist of continuous
functions. Let us mention the important special case 
\begin{equation*}
    \mbox{Int}_n(\Wo) 
		\asymp \mbox{Int}_n(\Wsp(\mathbb{T}^d))
		\asymp \mbox{Int}_n(\Wsp([0,1]^d)) 
		\,,\quad n\in \N\,,
\end{equation*}
for $1<p<\infty$ and $s>1/p$ taking into account that $\mathbf{F}^s_{p,2} = \Wsp$. Note, that this includes the case of
small smoothness, which is present if $2<p < \infty$ and $1/p<s\leq 1/2$, see \cite{Te91}, \cite{UU-frolov}. 

In case, one is only interested in the numerical integration of periodic functions, 
we present an even simpler modification that satisfies the second point above but 
uses only a pointwise multiplication with a certain function. To our best knowledge this method has not been used
before. In contrast to the
considerations above we will now restrict to function spaces on the $d$-torus $\tor$, see Subsection \ref{sect:torus}
below, i.e., we want to numerically integrate functions which are $1$-periodic in every component. There are many
results for $\Wsp(\tor)$ and $\mathbf{B}^s_{p,\infty}(\tor)$ in this direction, see for instance \cite{Te03} and the
references therein. The recent contribution \cite{HiMaOeUl14} especially deals with periodic Besov spaces of dominating
mixed smoothness and analyzes cubature formulae on digital nets. However, the picture is still not complete. Except
for the Fibonacci cubature rule \cite{Te91, DU14} in $d=2$ the discussed methods in \cite{Tr10, HiMaOeUl14} are either
not optimal or the optimal methods only work for specific (restricted) sets of parameters, like for instance small
smoothness $s<2$ in \cite{HiMaOeUl14}. In other words, the cubature formula is optimal for low smoothness but is not
able to benefit from higher regularity. One goal of this paper is to get rid of such conditions. To gain this,
we pursue a similar goal as above, namely the
reduction of the problem to the $\Ao$ setting via a tailored transformation. To this end we use a sufficiently
smooth function $\psi:\R^d \to [0,\infty)$ being compactly supported 
($\supp(\psi) \subset \Omega$)
such that 
\begin{equation}\label{eq:one}
\sum\limits_{\ell\in \Z^d} \psi(x+\ell) = 1\quad,\quad x \in \R^d\,.
\end{equation}
Clearly, we need $\Omega\supsetneq[0,1]^d$.
Then for any $1$-periodic integrable function in each component it holds
\beqq
   \int_{\R^d} \psi(x)f(x)\,dx &=& \sum\limits_{\ell \in \Z^d}
\int_{[0,1]^d}\psi(x+\ell)f(x+\ell)\,dx\\
    &=& \int_{[0,1]^d} f(x)\sum\limits_{\ell \in \Z^d} \psi(x+\ell)\,dx =
\int_{[0,1]^d} f(x)\,dx\,.
\eeqq
Starting with a cubature formula $Q_n$ of type \eqref{f01} for functions with support in $\Omega$ we
modify as
follows 
\begin{equation}\label{f08}
  \wt Q_n^\psi:=Q_n(\psi f) = \sum\limits_{i=1}^n \psi(x^i)\lambda_i f(\{x^i\})\,,
\end{equation}
where $\{x\}=x-\lfloor x\rfloor$ has to be understood component-wise. In contrast to \eqref{f03-1} the remaining step for analyzing
\eqref{f08} on $1$-periodic function spaces is to establish the boundedness of pointwise multiplier mappings
\begin{equation}\label{point_mult}
 \begin{split}
    \wt T_d^{\psi}:\Aspt(\tor) &\to \Aspt(\R^d) \\
    f(x) &\mapsto \psi(x)f(x)\,.
 \end{split}
\end{equation}
There is a rich theory concerning pointwise multiplication for isotropic spaces $B^s_{p,\theta}(\R^d)$,
$F^s_{p,\theta}(\R^d)$, see Runst, Sickel \cite{RuSi96}. To our best knowledge the field is rather open for
spaces of dominating mixed smoothness. In contrast to the theorems above on change of variable we are able to include
the quasi-Banach case
($\min\{p,\theta\}<1$) within this framework. This is why the natural lower smoothness restrictions enter, namely
$s>\sigma_p$ and $s>\sigma_{p,\theta}$, respectively, where
$$
    \sigma_{p}:= \max\Big\{\frac{1}{p}-1,0\Big\} \quad \mbox{and}\quad
\sigma_{p,\theta}:=\max\Big\{\frac{1}{p}-1,\frac{1}{\theta}-1,0\Big\}\,.
$$

\noindent For the $\mathbf{B}$-spaces the result looks as follows.

\begin{thm}\label{pointmult-B} Let $0< p,\theta \leq \infty$, $s>\sigma_p$
and $\psi\in C^k_0(\Omega)$ with $k\geq \lfloor s \rfloor+1$. 
Then,
\begin{itemize}
 \item[(i)] $\wt T^{\psi}_d : \Bspt(\tor) \to \Bspt$ is a bounded mapping, and
 \item[(ii)] provided that $\Bspt\subset C(\R^d)$ and $\psi$ satisfies~\eqref{eq:one}, 
	a corresponding modified cubature formula
\eqref{f08} on $\Bspt(\tor)$ does not perform asymptotically worse
than \eqref{f01} performs on $\Bo(\Omega)$, i.e.,
\[
e(\wt Q_n^\psi,\Bspt(\tor)) \,\lesssim\, e(Q_n,\Bo(\Omega))\,,\quad n\in \N\,.
\]
\end{itemize}
\end{thm}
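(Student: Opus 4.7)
The plan is to establish (i) via a mixed-modulus-of-smoothness characterization of $\Bspt$ combined with an iterated Leibniz rule for finite differences, and to deduce (ii) directly from (i) together with the partition identity~\eqref{eq:one}.

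For (i), fix $m:=\lfloor s\rfloor+1\le k$ and use the characterization, of the type alluded to in the abstract,
\[
\|g\|_{\Bspt} \,\asymp\, \sum_{e\subset\{1,\ldots,d\}}
\Big(\int_{[0,1]^{|e|}}\prod_{j\in e}t_j^{-s\theta}\,\omega_m^e(g,t)_p^{\theta}\prod_{j\in e}\frac{dt_j}{t_j}\Big)^{1/\theta},
\]
with $\omega_m^e(g,t)_p:=\sup_{|h_j|\le t_j,\,j\in e}\|\Delta^m_{h,e}g\|_{L_p(\R^d)}$ the mixed modulus of order $m$ in the directions from $e$ (and the $e=\emptyset$ term being $\|g\|_p$). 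Applying the classical one-dimensional Leibniz formula for finite differences in each direction $j\in e$ and multiplying the expansions, I decompose $\Delta^m_{h,e}(\psi f)$ into a finite sum over multi-indices $\alpha\in\{0,\ldots,m\}^{|e|}$ of products $(\Delta^{\alpha}_{h,e}\psi)(\cdot + \tau_\alpha)\cdot\Delta^{m-\alpha}_{h,e}f$, where differences are taken componentwise (order $0$ meaning the identity) and $\tau_\alpha$ is a harmless shift. Since $|\alpha|_\infty\le m\le k$ and $\psi\in C^k_0(\Omega)$, each $\psi$-factor is pointwise bounded by $\|\psi\|_{C^k}\prod_{j:\alpha_j>0}|h_j|^{\alpha_j}$.

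Substituting into the characterization and treating each direction separately yields three cases. For $\alpha_j=0$ one simply recovers the $j$-direction integral from the characterization of $\Bspt(\tor)$. For $\alpha_j=m$ the factor $|h_j|^m$ combines with $t_j^{-s\theta}$ to give the convergent integral $\int_0^1 t_j^{(m-s)\theta-1}\,dt_j$, using $m>s$. For $0<\alpha_j<m$ a Marchaud-type inequality in direction $j$ followed by a standard Hardy inequality converts the lower-order modulus $\omega^j_{m-\alpha_j}(f,\cdot)_p$ into the $m$-th order one at the cost of an irrelevant constant. Summing over the finitely many $\alpha$ yields $\|\psi f\|_{\Bspt}\lesssim \|\psi\|_{C^k}\|f\|_{\Bspt(\tor)}$ in the Banach regime. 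The quasi-Banach case $\min\{p,\theta\}<1$ is handled by the same scheme after replacing sums with $\theta$-powers and invoking the $p$-triangle inequality in place of Minkowski for $L_p$-norms, the hypothesis $s>\sigma_p$ ensuring convergence of the Marchaud integrals near the origin.

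For (ii), \eqref{eq:one} together with the $1$-periodicity of $f$ gives $I(f)=\int_{\R^d}\psi(x)f(x)\,dx=\int_\Omega (\psi f)(x)\,dx$, so that $I(f)-\wt Q_n^\psi(f) = \int_\Omega (\psi f)(x)\,dx - Q_n(\psi f)$. Since $\psi f$ has support in $\Omega$, part (i) places $\psi f$ in $\Bo(\Omega)$ with $\|\psi f\|_{\Bo(\Omega)}\lesssim\|f\|_{\Bspt(\tor)}$; taking the supremum over the unit ball of $\Bspt(\tor)$ completes the proof. The main obstacle will be the careful bookkeeping of the Leibniz expansion in the full $d$-fold tensor setting, in particular verifying that the simultaneous Marchaud/Hardy manipulations in the ``mixed'' directions ($0<\alpha_j<m$) keep the Hardy exponents in the correct range and interact consistently with the tensorial structure of the modulus characterization, which is exactly what ties the hypotheses $s>\sigma_p$ and $k\ge\lfloor s\rfloor+1$ to the validity of the estimate.
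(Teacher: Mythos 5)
Your plan is genuinely different from the paper's proof, and is plausible in outline, but you should be aware that the quantitative details you defer are exactly where most of the work lives and where the paper chose a different machinery to avoid them.

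The paper works with the \emph{discretized} rectangular-means characterization (Theorem~\ref{diff}) and, crucially, with the Littlewood--Paley decomposition $f=\sum_{\ell} f_{j+\ell}$. It then estimates $\Delta^{m,e}_h(\psi f_{j+\ell})$ directly via Lemma~\ref{ddim}, which exploits the band-limitation of the blocks $f_{j+\ell}$ to produce a Peetre maximal function rather than a difference of $f$. The two signs of $\ell$ are treated with the two factors $\min\{1,|b_ih_i|^m\}$ and $\max\{1,|b_ih_i|^a\}$ in Lemma~\ref{ddim}, and the quasi-Banach case is absorbed by the $\lambda$-interpolation trick (splitting $|\Delta^{m}_h|$ into a $(1-\lambda)$-power controlled by the Peetre maximal function and a $\lambda$-power controlled by the Hardy--Littlewood maximal function) together with the periodic maximal inequalities. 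This scheme was designed to handle the $\mathbf B$- and $\mathbf F$-cases simultaneously and to track the minimal condition $s>\sigma_p$ cleanly through $s-a(1-\lambda)>0$.

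You instead stay entirely on the ``real-variable'' side: Leibniz rule for differences, then a Marchaud + Hardy reduction in each active direction, no Fourier decomposition and no maximal functions. For the $\mathbf B$-scale this is a sensible alternative and morally correct; it is in fact closer to the classical Nikol'skij--Amanov style and arguably more elementary. What it buys is transparency and avoidance of vector-valued maximal inequalities; what it costs is that the key step -- the \emph{mixed} Marchaud inequality in the quasi-Banach regime -- is not a single off-the-shelf lemma. Concretely: after the Leibniz expansion you have, for each $\alpha$ with $0<\alpha_j<m$ in some directions, a mixed difference $\Delta^{(m-\alpha_1,\dots,m-\alpha_{|e|}),e}f$ of \emph{heterogeneous} orders, and you need to upgrade each coordinate to order $m$ by an iterated univariate Marchaud inequality while the other differences are spectators, and then close the loop with a Hardy inequality in the outer parameter $\theta$, which may itself be $<1$. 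Both ingredients exist (the one-dimensional Marchaud for $0<p<1$ with exponent $\min\{p,1\}$ is in DeVore--Lorentz, and the quasi-Banach Hardy bound follows from subadditivity of $x\mapsto x^\sigma$, $\sigma<1$, plus geometric decay of the weight), but you would need to state and prove the iterated, mixed form. Your last paragraph shows you are aware of this; just be warned that these bookkeeping steps, including the restriction of the $L_p$-norms to a bounded neighbourhood of $\Omega$ so that the periodic $f$ has finite norms, constitute the bulk of a complete proof and are not shorter than the paper's maximal-function route. A further practical disadvantage of your route is that it does not adapt to Theorem~\ref{pointmult-F} for the $\mathbf F$-scale, where Marchaud-type arguments are much more awkward because the integration in $p$ sits outside the $\theta$-sum; the paper's single scheme covers both.

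Part~(ii) of your proposal is correct and identical to the paper's reduction: once (i) is in place, the identity from~\eqref{eq:one}, the support property $\supp(\psi f)\subset\Omega$, and the definition~\eqref{f08} give the worst-case error bound immediately by taking suprema over the unit ball.
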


\noindent The result for the $\mathbf{F}$-spaces is similar, but we have to exclude the case $p=\infty$.

\begin{thm}\label{pointmult-F} Let $0<p<\infty$, $0<\theta \leq
\infty$, $s>\sigma_{p,\theta}$ and $\psi\in C^k_0(\Omega)$ with $k\geq \lfloor s \rfloor+1$. 
Then,
\begin{itemize}
 \item[(i)] $\wt T^{\psi}_d : \Fspt(\tor) \to \Fspt$ is a bounded mapping, and
 \item[(ii)] provided that $\Fspt\subset C(\R^d)$ and $\psi$ satisfies~\eqref{eq:one}, 
a corresponding modified cubature formula \eqref{f08} on $\Fspt(\tor)$ does not perform asymptotically
 worse than \eqref{f01} performs on $\Fo(\Omega)$, i.e.,
\[
e(\wt Q_n^\psi,\Fspt(\tor)) \,\lesssim\, e(Q_n,\Fo(\Omega))\,,\quad n\in \N\,.
\]
\end{itemize}
\end{thm}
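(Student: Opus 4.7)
The plan is to reduce part (ii) to part (i) by a short computation, and then attack the hard part (i) via a mixed-difference characterization combined with a Leibniz-type formula, estimating each term either by the smoothness of $\psi$ or by the norm of $f$, with a Fefferman--Stein maximal function argument to handle the $L_p(\ell_\theta)$ structure.

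For part (ii), I would argue as follows. Given $f\in \Fspt(\tor)$, the function $\psi f$ is compactly supported in $\Omega$ (since $\psi\in C^k_0(\Omega)$), so $\psi f\in \Fo(\Omega)$ by (i). The partition of unity identity~\eqref{eq:one} gives
\[
I(f)\,=\,\int_{[0,1]^d}f(x)\,dx\,=\,\int_{\R^d}\psi(x)f(x)\,dx\,=\,I(\psi f)\,,
\]
and by definition $\wt Q_n^\psi(f)=Q_n(\psi f)$. Hence
\[
|I(f)-\wt Q_n^\psi(f)|\,=\,|I(\psi f)-Q_n(\psi f)|\,\le\, e(Q_n,\Fo(\Omega))\,\|\psi f\|_{\Fspt}\,\lesssim\, e(Q_n,\Fo(\Omega))\,\|f\|_{\Fspt(\tor)}\,,
\]
where the last step is exactly (i). Taking the supremum over $\|f\|_{\Fspt(\tor)}\le 1$ yields the claim.

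For part (i), the plan is to use an equivalent norm on $\Fspt$ in terms of integral means of mixed differences of order $M=\lfloor s\rfloor+1$ (this is the characterization the authors advertise in the abstract and develop earlier in the paper). Writing coordinates indexed by a non-empty set $e\subset\{1,\dots,d\}$, the norm controls an $L_p$-quasi-norm of a vector-valued quantity built from $\Delta^M_{\mathbf t_e,e}(\psi f)$, integrated against $\prod_{j\in e}t_j^{-s\theta-1}\,dt_j$. To handle the product $\psi f$ I would apply a tensorized Leibniz formula for mixed differences: for each $j\in e$, $\Delta^M_{t_j,e_j}(\psi f)$ expands into $M+1$ terms mixing differences of $\psi$ and translates/differences of $f$; iterating over $j\in e$ gives a finite sum of product terms in which on each coordinate $j$, either a difference of order $m_j$ lands on $\psi$ (and $M-m_j$ on $f$), or conversely. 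For any coordinate where the difference falls on $\psi$, the bound $|\Delta^{m_j}_{t_j,e_j}\psi|\lesssim t_j^{m_j}\|\partial_j^{m_j}\psi\|_\infty$ (valid since $m_j\le M\le k$) extracts a factor that compensates the singular weight $t_j^{-s\theta-1}$ because $m_j\ge1$ and we only need $m_j>s$ in the worst case — this is exactly where the hypothesis $k\ge \lfloor s\rfloor+1$ enters. Coordinates where the difference stays on $f$ contribute precisely the pieces of the mixed-difference characterization of $f$, evaluated on the support of $\psi$; the translates produced by the Leibniz expansion are absorbed by a Peetre/Hardy--Littlewood maximal function in the corresponding direction.

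Once every term is written as (a bounded factor depending on $\psi$) times either an $L_p$-term or a mixed-difference term of $f$, I would bound the $L_p(\ell_\theta)$-quasi-norm term by term. The vector-valued estimate uses the Fefferman--Stein maximal inequality coordinate by coordinate, which is precisely the reason we must restrict to $0<p<\infty$, $\sigma_{p,\theta}<s$, and exclude $p=\infty$; the periodicity of $f$ is used only to identify $f|_{\mathrm{supp}\,\psi}$ with a compactly supported representative so that differences make sense on $\R^d$. Summing the finitely many Leibniz terms gives $\|\psi f\|_{\Fspt}\lesssim \|f\|_{\Fspt(\tor)}$, where the implicit constant depends on $\psi$ through $\max_{|\alpha|\le k}\|\partial^\alpha\psi\|_\infty$.

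The main obstacle I expect is the combinatorial/technical bookkeeping in coordinates where the difference on $\psi$ has order $0$ (so that no extra $t_j$-factor is produced) and simultaneously the coordinate belongs to $e$: there the translates of $f$ must be controlled by the maximal operator without losing the integrability needed for Fefferman--Stein. Carefully choosing $M=\lfloor s\rfloor+1$ and using $k\ge M$ to allow at least one derivative of $\psi$ per coordinate where needed, together with $s>\sigma_{p,\theta}$, makes the remaining $L_p(\ell_\theta)$ estimates routine.
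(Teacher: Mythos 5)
Part (ii) is exactly as the paper handles it and is correct: once (i) is in hand, the identity $\int_{[0,1]^d} f = \int_{\R^d}\psi f$ from~\eqref{eq:one} and $\wt Q_n^\psi(f)=Q_n(\psi f)$ reduce the worst-case error immediately, so no concerns there.

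For part (i), your high-level plan (mixed-difference characterization plus a product rule) is in the right spirit, but as written it has two genuine gaps that the paper's proof resolves differently, and both matter. First, the cross terms in your tensorized Leibniz formula for differences are the real difficulty, not the $m_j=0$ terms you flag as the main obstacle. In a coordinate $j\in e$ where an order-$m_j$ difference lands on $\psi$ with $1\le m_j\le\lfloor s\rfloor$, the $\psi$-factor gives only $t_j^{m_j}$ with $m_j\le s$, which does \emph{not} on its own absorb the weight $t_j^{-s\theta-1}$; you must also extract genuine decay of order $t_j^{M-m_j}$ from $\Delta^{M-m_j}_{t_j}f$. For an arbitrary $f\in\Fspt$ this decay is not available pointwise; the paper gets it by first decomposing $f=\sum_\ell f_{j+\ell}$ into Littlewood--Paley blocks and proving a Bernstein/Peetre-type estimate (Lemma~\ref{ddim}) that produces the needed factors $\min\{1,|b_ih_i|^m\}$ precisely because each $f_{j+\ell}$ is band-limited. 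You invoke a ``Peetre/Hardy--Littlewood maximal function'' to absorb translates, but the Peetre maximal inequality (Theorem~\ref{peetremax}) only applies to band-limited pieces — so your argument secretly needs the Littlewood--Paley decomposition you never introduce.

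Second, your closing claim that the vector-valued Fefferman--Stein inequality ``makes the remaining $L_p(\ell_\theta)$ estimates routine'' is exactly where the quasi-Banach range $\min\{p,\theta\}\le1$ breaks: Theorem~\ref{feffstein} requires $1<p<\infty$ and $1<\theta\le\infty$. The theorem you are proving allows all $0<p<\infty$, $0<\theta\le\infty$ with $s>\sigma_{p,\theta}$, and to cover it the paper uses a ``$\lambda$-trick'': split $|\Delta^m_h(\psi f_{j+\ell})| = |\cdot|^{1-\lambda}|\cdot|^\lambda$ with $0<\lambda<\min\{p,\theta\}$, estimate the first factor by the Peetre maximal function and the second by $M[|\cdot|^\lambda]$, choose $a>1/\min\{p,\theta\}$ with $s-a(1-\lambda)>0$ (here is where $s>\sigma_{p,\theta}$ enters), and then apply H\"older together with the \emph{scalar} $L_{p/\lambda}(\ell_{\theta/\lambda})$ maximal inequality. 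Without this device, or some substitute, your argument only covers $\min\{p,\theta\}>1$ and thus proves strictly less than the theorem claims. So I would say: your route is salvageable, but only after adding the Littlewood--Paley decomposition with Lemma~\ref{ddim} (or an equivalent band-limited estimate) and the $\lambda$-trick — at which point it essentially becomes the paper's proof.
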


\noindent Both theorems immediately imply the two-sided relation 
\begin{equation}\label{per_circle}
 \mbox{Int}_n(\mathring{\mathbf A}_{p,\theta}^s) \asymp
\mbox{Int}_n({\mathbf A}_{p,\theta}^s(\mathbb{T}^d))\,,\quad n\in \N\,,
\end{equation}
in the above range of parameters if additionally $\Aspt \hookrightarrow
C(\R^d)$ holds true. 

\subsection{Organization of the paper}
The paper is organized as follows. In Section \ref{sec:prel} we collect the required tools from Fourier analysis,
especially maximal inequalities. Section \ref{sec:spaces} is devoted to the definition and properties of the function
spaces under consideration. There we characterize Besov-Triebel-Lizorkin spaces by mixed iterated differences
which turn out to be the crucial tool for proving our main results. Sections \ref{sec:proof_modi} and \ref{sec:mult}
are the heart of the paper in which we prove the change of variable and pointwise multiplication results already
discussed in the introduction section. For the convenience of the reader we also discuss the one-dimensional versions
of the results in Sections \ref{sec:proof_modi} and \ref{sec:mult} in detail. 

\subsection{Notation} 
As usual $\N$ denotes the natural numbers, $\N_0=\N\cup\{0\}$, 
$\Z$ denotes the integers, 
$\R$ the real numbers, 
and $\C$ the complex numbers. By $\mathbb{T}:=\R/\Z$ we denote the torus represented with the interval $[0,1]$ in
$\R$\,. The letter $d$ is always reserved for the underlying dimension in $\R^d, \Z^d$ etc and by $[d]$ we mean
$[d]=\{1,...,d\}$. We denote
with $\langle x,y\rangle$ or $x\cdot y$ the usual Euclidean inner product in $\R^d$ or $\C^d$.
For $0<p\leq \infty$ and $x\in \R^d$ we denote $|x|_p = (\sum_{i=1}^d |x_i|^p)^{1/p}$ with the
usual modification in the case $p=\infty$. 
By $(x_1,\ldots,x_d)>0$ we mean that each coordinate is positive. As usual we decompose $x\in \R$ in $x = \lfloor x
\rfloor + \{x\}$ where $0\leq \{x\} <1$ (resp.~$\{x\}$ for $x\in \R^d$ is meant component-wise). 

If $X$ and $Y$ are two (quasi-)normed spaces, the (quasi-)norm
of an element $x$ in $X$ will be denoted by $\|x\|_X$. 
The symbol $X \hookrightarrow Y$ indicates that the
identity operator is continuous. For two sequences $a_n$ and $b_n$ we will write $a_n \lesssim b_n$ if there exists a
constant $c>0$ such that $a_n \leq c\,b_n$ for all $n$. We will write $a_n \asymp b_n$ if $a_n \lesssim b_n$ and $b_n
\lesssim a_n$.

\goodbreak

\section{Tools from Fourier analysis}
\label{sec:prel}

In this section we will collect the required tools from Fourier analysis. 
\subsection{Preliminaries}
\noindent
Let $L_p=L_p(\R^d)$, $0 < p\le\infty$, be the space of all functions 
$f:\R^d\to\C$ such that 
\[
\|f\|_p := \Big(\int_{\R^d} |f(x)|^p \dint x \Big)^{1/p} < \infty
\]
with the usual modification if $p=\infty$. 
We will also need $L_p$-spaces on compact domains $\Omega\subset\R^d$ 
instead of $\R^d$. 
We write $\|f\|_{L_p(\Omega)}$ for the corresponding (restricted) $L_p$-norm. 

For $k\in \N_0$ we denote by $C^{k}_0(\R^d)$ the collection of all compactly supported 
functions $\varphi$ on $\R^d$ which have $D^{\alpha}\varphi$ uniformly continuous on $\R^d$ 
for $|\alpha|_{\infty}\leq k$. 
Additionally, we define the spaces infinitely differentiable functions $C^{\infty}(\R^d)$ and infinitely differentiable
functions with compact support $C_0^{\infty}(\R^d)$ as well as the \emph{Schwartz space} $\S=\S(\R^d)$ of 
all rapidly decaying infinitely differentiable functions on $\R^d$, i.e.,  
\[
\S := \bigl\{\varphi\in C^{\infty}(\R^d)\colon \|\varphi\|_{k,\ell}<\infty 
\;\text{ for all } k,\ell\in\N\bigr\}\,,
\]
and
$$
    \|\varphi\|_{k,\ell}:=\Big\|(1+|\cdot|)^k\sum_{|\alpha|_1\leq \ell}|D^{\alpha}\varphi(\cdot)|\Big\|_{\infty},\ \ \alpha\in \N_0^d\,.
$$

The space $\mathcal{S}'(\R^d)$, the topological dual of $\mathcal{S}(\R^d)$, is also referred to as the set of tempered
distributions on $\R^d$. Indeed, a linear mapping $f:\mathcal{S}(\R^d) \to \C$ belongs
to $\mathcal{S}'(\R^d)$ if and only if there exist numbers $k,\ell \in \N$
and a constant $c = c_f$ such that
\beqq
    |f(\varphi)| \leq c_f\|\varphi\|_{k,\ell}
\eeqq
for all $\varphi\in \mathcal{S}(\R^d)$. The space $\mathcal{S}'(\R^d)$ is
equipped with the weak$^{\ast}$-topology.

For $f\in L_1(\R^d)$ we define the Fourier transform
\[
\F f(\xi) 
\,=\, \int_{\R^d} f(y) e^{-2\pi i \langle \xi, y\rangle} d y, \qquad \xi\in\R^d,  
\]
and the corresponding inverse Fourier transform $\F^{-1}f(\xi)=\F f(-\xi)$.
As usual, the Fourier transform can be extended to $\mathcal{S}'(\R^d)$
by $(\cf f)(\varphi) := f(\cf \varphi)$, where
$\,f\in \mathcal{S}'(\R^d)$ and $\varphi \in \mathcal{S}(\R^d)$. 
The mapping $\cf:\S'(\R^d) \to \S'(\R^d)$ is a bijection.

The convolution $\varphi\ast \psi$ of two
square-integrable
functions $\varphi, \psi$ is defined via the integral
\begin{equation*}
    (\varphi \ast \psi)(x) = \int_{\R^d} \varphi(x-y)\psi(y)\,dy\,.
\end{equation*}
If $\varphi,\psi \in \mathcal{S}(\R^d)$ then $\varphi \ast \psi$ still belongs to
$\mathcal{S}(\R^d)$. 
In fact, we have $\varphi \ast \psi\in\S(\R^d)$ even if $\varphi \in \mathcal{S}(\R^d)$ 
and $f\in L_1(\R^d)$. The convolution can be extended to $\mathcal{S}(\R^d)\times \mathcal{S}'(\R^d)$ via
$(\varphi\ast f)(x) = f(\varphi(x-\cdot))$. It makes sense pointwise and is 
a $C^{\infty}$-function in $\R^d$.
\subsection{Maximal inequalities}

Let us provide here the maximal inequalities for the Hardy-Littlewood and Peetre maximal functions, respectively, which
are essential for the proofs of Theorems \ref{changemixed-B} and \ref{changemixed-F}. For further details we refer to \cite[1.2, 1.3]{Vyb06} or \cite[Chapt.\ 2]{ST}\,.

For a locally integrable function $f:\R^d\to \C$ we denote by $Mf(x)$ the Hardy-Littlewood maximal function defined by
\begin{equation}
  (Mf)(x) = \sup\limits_{x\in Q} \frac{1}{|Q|}\int_{Q}\,|f(y)|\,dy\quad,\quad x\in\R^d \label{maxfunc}
  \quad,
\end{equation}
where the supremum is taken over all cubes with sides parallel to the coordinate axes containing~$x$.
A vector valued generalization of the classical Hardy-Littlewood maximal inequality is due to
Fefferman and Stein \cite{FeSt71}.
\begin{thm}\label{feffstein}\rm For $1<p<\infty$ and $1 <\theta \leq \infty$ there exists a constant $c>0$, such that
  \begin{equation*}
      \Big\|\Big(\sum\limits_{\ell \in I} |M f_{\ell}|^\theta\Big)^{1/\theta}\Big\|_p \leq c
      \Big\|\Big(\sum\limits_{\ell \in I} |f_{\ell}|^\theta\Big)^{1/\theta}\Big\|_p 
  \end{equation*}
  holds for all sequences $\{f_\ell\}_{\ell\in I}$ of locally Lebesgue-integrable functions on $\R^d$.
\end{thm}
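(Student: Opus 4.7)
The plan is to reduce the vector-valued maximal inequality to the scalar Hardy--Littlewood theorem through a duality argument combined with a weighted norm estimate, treating the two endpoints of $\theta$ separately. I would begin by disposing of the trivial endpoint $\theta=\infty$: the pointwise monotonicity of $M$ gives $\sup_{\ell\in I}(Mf_\ell)(x) \leq M\bigl(\sup_{\ell\in I}|f_\ell|\bigr)(x)$, after which the scalar $L_p$-boundedness of $M$ for $1<p<\infty$ yields the claim. The diagonal case $\theta = p$ is immediate from Fubini--Tonelli together with the scalar inequality applied termwise.

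The case $1<\theta<p$ can then be handled by duality. Setting $q := p/\theta > 1$, I would write
\[
\Bigl\|\Bigl(\sum_{\ell\in I}|Mf_\ell|^\theta\Bigr)^{1/\theta}\Bigr\|_p^{\theta}
\,=\,\sup\Bigl\{\int_{\R^d}\sum_{\ell\in I}(Mf_\ell)^\theta\, g\, dx \,:\, g\ge 0,\ \|g\|_{q'}\le 1\Bigr\},
\]
which reduces the problem to the scalar weighted inequality
\[
\int_{\R^d} (Mf)^\theta \, g\, dx \,\le\, C \int_{\R^d} |f|^\theta\, Mg\, dx, \qquad 1<\theta<\infty,
\]
with $C$ independent of $f$ and of the nonnegative function $g$. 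Once this is in hand, applying it to each $f_\ell$, summing, and then invoking H\"older's inequality together with the scalar boundedness of $M$ on $L_{q'}$ (valid since $q'>1$) completes this range. The remaining range $p<\theta<\infty$ is then reached by Riesz--Thorin type interpolation for the sublinear vector-valued operator $(f_\ell)\mapsto(Mf_\ell)$ between the already established endpoints $\theta=p$ and $\theta=\infty$ in the scale $L_p(\ell^\theta)$.

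The main obstacle is the weighted inequality above, which encapsulates the genuine harmonic-analytic content of the theorem. One standard route is via the Coifman--Rochberg observation that, for any locally integrable nonnegative $g$ with $Mg<\infty$ almost everywhere, the function $Mg$ belongs to the Muckenhoupt class $A_1$ with a universal constant; Muckenhoupt's theorem then gives boundedness of $M$ on $L^\theta(Mg)$ for every $\theta>1$ (since $A_1\subset A_\theta$), which is exactly the required estimate. A more self-contained alternative proceeds via a good-$\lambda$ distributional inequality comparing the super-level sets of $Mf$ and $f$ weighted against $g$, along the lines of the original Fefferman--Stein argument. Either way, once this weighted step is available, the vector-valued bound follows cleanly from the duality--interpolation scheme sketched above.
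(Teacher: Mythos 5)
The paper does not prove Theorem~\ref{feffstein}; it is the classical Fefferman--Stein vector-valued maximal inequality and is simply cited to \cite{FeSt71}. So there is no proof in the paper to compare against, and I assess your outline on its own terms.

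Your organization of the parameter range ($\theta=\infty$ and $\theta=p$ trivial, $1<\theta<p$ via duality plus a weighted bound, $p<\theta<\infty$ via interpolation) is legitimate, and the duality step with $q=p/\theta>1$ together with the two-weight estimate $\int(Mf)^\theta g\,dx\lesssim\int|f|^\theta\,Mg\,dx$ is exactly the classical argument for $1<\theta\le p$. There are, however, two genuine problems. First, the Coifman--Rochberg statement is misquoted: it is $(Mg)^\delta$ for $0<\delta<1$, not $Mg$ itself, that lies in $A_1$ with a constant depending only on $d$ and $\delta$. In general $Mg\notin A_1$; for instance with $g=\chi_{[0,1]}$ on $\R$ one has $Mg(x)\asymp(1+|x|)^{-1}$ while $M(Mg)(x)\gtrsim(\log|x|)/|x|$ for large $|x|$, so $M(Mg)/Mg$ is unbounded. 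Consequently the route ``$Mg\in A_1\subset A_\theta$, hence $M$ is bounded on $L_\theta(Mg\,dx)$'' does not deliver the two-weight bound. Your alternative route --- the weighted weak-$(1,1)$ inequality $\int_{\{Mf>\lambda\}} g\,dx\lesssim \lambda^{-1}\int|f|\,Mg\,dx$ via a Vitali covering, followed by a layer-cake/truncation argument to pass to the strong $\theta>1$ estimate --- is correct and self-contained, so the weighted inequality itself is fine, but as written your first justification of it fails.

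Second, and more seriously, the range $p<\theta<\infty$ cannot be closed by ``Riesz--Thorin type interpolation'' as stated: the complex method requires linearity (or an analytic family), and the diagonal operator $(f_\ell)\mapsto(Mf_\ell)$ is only sublinear; naive complex interpolation of sublinear operators is not valid in general. The step can be repaired, but it needs an additional idea. One option is to linearize $M$: for each $\ell$ and $x$ pick a cube $Q_\ell(x)\ni x$ nearly attaining $Mf_\ell(x)$, interpolate the resulting family of positive linear averaging operators (which are dominated pointwise by $M$), and use Calder\'on's identification of $L_p(\ell^\theta)$ as an interpolation space between $L_p(\ell^p)$ and $L_p(\ell^\infty)$. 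Another option is to exploit the pointwise log-convexity $M(g_0^{1-\eta}g_1^\eta)\le(Mg_0)^{1-\eta}(Mg_1)^\eta$ together with a Calder\'on-product factorization of a unit vector in $L_p(\ell^\theta)$ into factors in $L_p(\ell^p)$ and $L_p(\ell^\infty)$. Alternatively, the standard Fefferman--Stein proof handles $1<p\le\theta$ differently: it establishes the vector-valued weak-$(1,1)$ endpoint via a Calder\'on--Zygmund decomposition of $\bigl(\sum_\ell|f_\ell|^\theta\bigr)^{1/\theta}$ and then interpolates in the outer index $p$ against the trivial $(\theta,\theta)$ bound, reserving the duality trick for $\theta<p$. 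Your scheme is attractive in that it bypasses the CZ decomposition entirely, but as written the interpolation step is a gap, not a proof.
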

\noindent We require a direction-wise version of (\ref{maxfunc})
\begin{equation*}
  (M_i f)(x) = \sup\limits_{s>0}\frac{1}{2s}\int_{x_i-s}^{x_i+s} |f(x_1,...,x_{i-1},t,x_{i+1},...,x_d)|\,dt
  \quad,\quad x\in\R^d. 
\end{equation*}
We denote the composition of these operators by  $M_e=\prod_{i\in e}M_i $, where $e$ is a subset of $[d]$ and
$M_{\ell}M_k$ has to be interpreted as $M_{\ell}\circ M_k$. The following version of the Fefferman-Stein maximal
inequality is due St\"ockert \cite{Stoe}.

\begin{thm}\label{bagby}\rm For $1<p<\infty$ and $1 <\theta \leq \infty$ there exists a constant $c>0$, such that for
any $i\in [d]$
  \begin{equation*}
      \Big\|\Big(\sum\limits_{\ell \in I} |M_i f_{\ell}|^\theta\Big)^{1/\theta}\Big\|_p \leq c
      \Big\|\Big(\sum\limits_{\ell \in I} |f_{\ell}|^\theta\Big)^{1/\theta}\Big\|_p 
  \end{equation*}
  holds for all sequences $\{f_\ell\}_{\ell\in I}$ of locally Lebesgue-integrable functions on $\R^d$.
\end{thm}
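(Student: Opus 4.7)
The plan is to reduce Theorem \ref{bagby} to the one-dimensional case of Theorem \ref{feffstein} by freezing all variables except the $i$-th one and then applying Fubini's theorem.

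Fix $i\in[d]$ and write $\hat x := (x_1,\ldots,x_{i-1},x_{i+1},\ldots,x_d)\in\R^{d-1}$. For each fixed $\hat x$ and each $\ell\in I$, I introduce the one-variable slice
\[
g^{\hat x}_\ell(t) \,:=\, f_\ell(x_1,\ldots,x_{i-1},t,x_{i+1},\ldots,x_d)\,,\qquad t\in\R\,.
\]
Since $M_i$ acts only in the $i$-th coordinate, we have the pointwise identity
\[
(M_i f_\ell)(x_1,\ldots,x_{i-1},t,x_{i+1},\ldots,x_d) \,=\, (\wt M g^{\hat x}_\ell)(t)\,,
\]
where $\wt M$ denotes the one-dimensional centered Hardy--Littlewood maximal operator. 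The latter is pointwise comparable (with constants independent of everything) to the uncentered operator $M$ from \eqref{maxfunc} specialised to $d=1$, so it suffices to work with~$M$.

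Next, for each fixed $\hat x\in\R^{d-1}$ I apply Theorem \ref{feffstein} in dimension $d=1$ to the sequence $\{g^{\hat x}_\ell\}_{\ell\in I}$, obtaining a constant $c>0$ (independent of $\hat x$) with
\[
\int_\R \Big(\sum_{\ell\in I} |(Mg^{\hat x}_\ell)(t)|^\theta\Big)^{p/\theta} dt
\,\leq\, c^p \int_\R \Big(\sum_{\ell\in I} |g^{\hat x}_\ell(t)|^\theta\Big)^{p/\theta} dt\,.
\]
Integrating this slice-wise estimate over $\hat x\in\R^{d-1}$ and applying Fubini's theorem on both sides produces exactly the inequality asserted in Theorem \ref{bagby}.

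The main obstacle is only a mild measurability check: one has to verify that $M_if_\ell$ is jointly measurable on $\R^d$ (not merely in $t$ for a.e.\ $\hat x$) and that the slices $g^{\hat x}_\ell$ are locally integrable for a.e.\ $\hat x$, so that Theorem \ref{feffstein} is legitimately applicable after freezing. Both are standard consequences of Fubini's theorem combined with the fact that, by continuity of the integral in $s$, the supremum in the definition of $M_i$ can be restricted to $s\in\mathbb{Q}_{>0}$, turning $M_if_\ell$ into a countable supremum of jointly measurable functions. After this brief verification, the one-dimensional vector-valued Fefferman--Stein inequality together with Fubini delivers the claim.
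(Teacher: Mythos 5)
The paper does not prove Theorem~\ref{bagby}; it simply cites St\"ockert~\cite{Stoe}. Your Fubini reduction to the one-dimensional case of Theorem~\ref{feffstein} is the standard and correct way to obtain a directional Fefferman--Stein inequality, and I see no gap: freezing $\hat x$, applying the $d=1$ vector-valued inequality to the slices $g^{\hat x}_\ell$ (with a constant depending only on $p,\theta$), and integrating in $\hat x$ gives exactly the claim, and your measurability remark (restricting the supremum in $M_i$ to rational radii, using that $s\mapsto \frac{1}{2s}\int_{x_i-s}^{x_i+s}|f(\hat x,t)|\,dt$ is continuous for a.e.\ $\hat x$ by local integrability of the slices) correctly justifies the use of Fubini on the left-hand side.
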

Iteration of this theorem yields a similar boundedness property for the operator $M_{[d]}$.\\

\noindent The following construction of a maximal function is due to Peetre, Fefferman and Stein.
Let $b=(b_1,...,b_d)>0$, $a>0$, and $f \in L_1(\R^d)$ with $\F f$ compactly
supported. We define the Peetre maximal function $P_{b,a}f$ by
\begin{equation}
  P_{b,a}f(x) = \sup\limits_{z\in \R^d} \frac{|f(x-z)|}{(1+|b_1z_1|)^a\cdot...\cdot (1+|b_dz_d|)^a}
  \quad.\label{petfefste}
\end{equation}

\begin{lem}\label{maxunglem} Let $\Omega \subset \R^d$ be a compact set. Let further $a>0$
and $\alpha = (\alpha_1,...,\alpha_d) \in \N_0^d$. Then there exist two constants
$c_1,c_2>0$ (independently of $f$) such that
  \begin{equation}
    \begin{split}
      P_{(1,...,1),a}(D^{\alpha}f)(x) &\leq c_1 P_{(1,...,1),a} f(x)\\
      &\leq c_2\big(M_d\big(M_{d-1}\big(...\big(M_1|f|^{1/a}\big)...\big)
      \big)\big)^{a}(x)\quad
    \end{split}
    \label{g1}
  \end{equation}
  holds for all $f\in L_1(\R^d)$ with $\supp(\F f) \subset \Omega$ and all $x\in\R^d$. 
	The constants $c_1$, $c_2$ depend on $\Omega$.
\end{lem}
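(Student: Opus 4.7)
For the first inequality, I would pick $\varphi\in\S(\R^d)$ with $\F\varphi\equiv 1$ on a neighborhood of $\Omega$. The spectral support assumption on $f$ then gives the reproducing identity $f=\varphi\ast f$, and hence $D^\alpha f=(D^\alpha\varphi)\ast f$. Pointwise,
\[
|D^\alpha f(x-z)|\,\leq\,\int_{\R^d}|D^\alpha\varphi(y)|\,|f(x-z-y)|\,dy.
\]
Using the defining inequality $|f(x-z-y)|\leq P_{(1,\ldots,1),a}f(x)\prod_i(1+|z_i+y_i|)^a$ together with the elementary split $(1+|z_i+y_i|)\leq(1+|z_i|)(1+|y_i|)$, I separate the variables and pull out $P_{(1,\ldots,1),a}f(x)\prod_i(1+|z_i|)^a$; the residual integral $\int_{\R^d}|D^\alpha\varphi(y)|\prod_i(1+|y_i|)^a\,dy$ is finite by the Schwartz decay of $\varphi$ and defines the constant $c_1$. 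Dividing by $\prod_i(1+|z_i|)^a$ and taking $\sup_z$ gives the first inequality.

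For the second inequality, the plan is to iterate a one-dimensional bandlimited mean-value property across all $d$ coordinates. The classical key ingredient is: for any $r>0$ and any $g$ whose one-dimensional Fourier transform in direction $i$ is supported in $[-R,R]$,
\[
|g(y)|^{r}\,\leq\,C\,\frac{1}{h}\int_{-h/2}^{h/2}|g(y-te_i)|^{r}\,dt\quad\text{for } h\leq C_0/R,
\]
see \cite{ST,Vyb06}. Choosing $r=1/a$, fixing $h_i=C_0/R$, and iterating this over the directions $i=1,\ldots,d$ (all of which make sense since $\Omega$ is compact) gives
\[
|f(x-z)|^{1/a}\,\leq\,C\,\prod_{i=1}^d h_i^{-1}\int_{\prod_i[-h_i/2,h_i/2]}|f(x-z-t)|^{1/a}\,dt,
\]
an average of $|f|^{1/a}$ over a fixed-size box centered at $x-z$. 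I would then enlarge each coordinate-$i$ interval (of half-length $h_i/2$ and center $(x-z)_i$) to one centered at $x_i$ of half-length $|z_i|+h_i/2$, which contains the original interval; this introduces a multiplicative factor $\prod_i(1+|z_i|/h_i)\leq C_a\prod_i(1+|z_i|)$. The enlarged box is now centered at $x$, so its average is bounded by the iterated maximal function $M_d\cdots M_1|f|^{1/a}(x)$. Raising to the $a$-th power, dividing by $\prod_i(1+|z_i|)^a$, and taking $\sup_z$ yields the second inequality.

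The main obstacle is this enlargement step: while the direction-wise bandlimited mean value naturally yields averages of $|f|^{1/a}$ on fixed-size boxes around $x-z$, the target right-hand side is an iterated maximal function at $x$. The crucial observation is that enlarging in direction $i$ to encompass $x_i$ costs precisely a factor $1+|z_i|$, and multiplying across directions produces exactly the Peetre weight $\prod_i(1+|z_i|)^a$ after raising to the $a$-th power. This direction-wise structure also explains why the iterated maximal function $M_d\cdots M_1$ is the natural object on the right-hand side, rather than the isotropic Hardy--Littlewood maximal function whose use would force the stronger classical condition $ar>d$ on the exponents.
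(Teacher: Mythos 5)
The paper does not give a proof of this lemma; it only points to \cite[1.2, 1.3]{Vyb06} and \cite[Chapt.\ 2]{ST}. Your argument is correct and is essentially the standard route taken there: for the first inequality, the reproducing identity $f=\varphi\ast f$ via a Schwartz function with $\F\varphi\equiv 1$ near $\Omega$, followed by the submultiplicativity of the Peetre weight; for the second, iterating the one-dimensional bandlimited mean-value inequality (in the form $|g(y)|^{1/a}\lesssim h^{-1}\int_{-h/2}^{h/2}|g(y-te_i)|^{1/a}\,dt$ with $h$ fixed of order $1/R$, where $R$ bounds the projection of $\Omega$ onto the $i$-th axis) across the $d$ coordinate directions and then recentering each interval at $x_i$, which produces the factor $\prod_i(1+2|z_i|/h_i)\lesssim\prod_i(1+|z_i|)$ and yields the iterated directional maximal function at $x$. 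Both the choice of order $M_d\cdots M_1$ and the appearance of $|f|^{1/a}$ inside the maximal function match the classical argument, and your remark about why the iterated directional maximal function is the natural object (avoiding the stronger exponent restrictions of the isotropic Hardy--Littlewood maximal function) is also the right one. No gap.
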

We will need the vector-valued Peetre maximal inequality which is a direct consequence of Lemma \ref{maxunglem} together
with Theorem \ref{bagby}.
\begin{thm}\label{peetremax} Let $0<p<\infty$, $0<\theta\leq \infty$ and $a>\max\{1/p,1/\theta\}$. Let further
            $b^{\ell} = (b^{\ell}_1,...,b^{\ell}_d)>0$ for $\ell \in I$ and $\Omega = \{\Omega_{\ell}\}_{\ell\in I}$,
            such that
            $$
                       \Omega_{\ell} \subset [-b_1^{\ell},b_1^{\ell}]\times\cdots \times [-b_d^{\ell},b_d^{\ell}]
            $$
            is compact for $\ell\in I$. Then there is a constant $C>0$ (independently of $f$ and $\Omega$) such that 
            $$
               \Big\|\Big(\sum\limits_{\ell \in I} |P_{b^{\ell},a}f_{\ell}|^{\theta}\Big)^{1/\theta}\Big\|_p \leq
C 
	       \Big\|\Big(\sum\limits_{\ell \in I} |f_{\ell}|^{\theta}\Big)^{1/\theta}\Big\|_p
            $$
            holds for all systems $f = \{f_{\ell}\}_{\ell\in I}$ with $\supp(\F f_{\ell}) \subset \Omega_{\ell}$, $\ell
\in I$\,.
 \end{thm}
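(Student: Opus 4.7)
The plan is to derive the theorem from Lemma \ref{maxunglem} and the iterated version of Theorem \ref{bagby} via a dilation/rescaling trick that eliminates the $\ell$-dependence of the compact sets $\Omega_\ell$, followed by a scaling of exponents that converts the power $a$ in the Peetre maximal function into the power $a$ in front of $1/p$ and $1/\theta$.

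First I would normalize the Fourier supports. For each $\ell$ set $g_\ell(y):=f_\ell(y_1/b_1^\ell,\dots,y_d/b_d^\ell)$, so that $\supp(\F g_\ell)\subset[-1,1]^d$ uniformly in $\ell$. A direct substitution in the definition \eqref{petfefste} gives the key identity
\[
P_{b^\ell,a}f_\ell(x) \;=\; P_{(1,\dots,1),a}g_\ell\bigl(b_1^\ell x_1,\dots,b_d^\ell x_d\bigr),
\]
and, because the directional maximal operators $M_i$ are invariant under one-dimensional dilation of the argument, one also has $(M_{[d]}|g_\ell|^{1/a})(b_1^\ell x_1,\dots,b_d^\ell x_d)=(M_{[d]}|f_\ell|^{1/a})(x)$ with $M_{[d]}=M_d\circ\cdots\circ M_1$. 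Applying Lemma \ref{maxunglem} to the function $g_\ell$, whose Fourier transform is supported in the $\ell$-independent compact set $[-1,1]^d$, therefore yields the pointwise estimate
\[
P_{b^\ell,a}f_\ell(x) \;\le\; c\,\bigl(M_{[d]}|f_\ell|^{1/a}\bigr)^{a}(x)
\]
with a constant $c$ independent of $\ell$ and $f$.

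Second, I would invoke Theorem \ref{bagby} iteratively with exponents rescaled by $a$. Setting $F_\ell:=|f_\ell|^{1/a}$ and using $\|h^a\|_p=\|h\|_{ap}^{\,a}$ together with the above pointwise bound,
\[
\Bigl\|\bigl(\textstyle\sum_\ell|P_{b^\ell,a}f_\ell|^{\theta}\bigr)^{1/\theta}\Bigr\|_p
\;\le\; c\,\Bigl\|\bigl(\textstyle\sum_\ell(M_{[d]}F_\ell)^{a\theta}\bigr)^{1/(a\theta)}\Bigr\|_{ap}^{\,a}.
\]
The hypothesis $a>\max\{1/p,1/\theta\}$ is precisely what is needed to ensure $ap>1$ and $a\theta>1$, so that $d$-fold application of Theorem \ref{bagby} (once for each direction $i\in[d]$) to the last expression is permitted and produces
\[
\Bigl\|\bigl(\textstyle\sum_\ell F_\ell^{a\theta}\bigr)^{1/(a\theta)}\Bigr\|_{ap}^{\,a}
\;=\;\Bigl\|\bigl(\textstyle\sum_\ell|f_\ell|^{\theta}\bigr)^{1/\theta}\Bigr\|_{p},
\]
which is the desired bound. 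The case $\theta=\infty$ is handled by the standard convention that $(\sum_\ell|\cdot|^\theta)^{1/\theta}$ is replaced by $\sup_\ell|\cdot|$ and Theorem \ref{bagby} in this form.

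The one step that needs a little care is verifying the dilation identities for both the Peetre maximal function and the iterated one-dimensional maximal operators; these are routine changes of variables but must be checked directionwise so that the resulting constant truly does not depend on $\ell$. Once that is in place, the remaining content is just tracking the exponent arithmetic $p\mapsto ap$, $\theta\mapsto a\theta$, which is exactly where the hypothesis on $a$ is used.
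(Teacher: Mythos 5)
Your proof is correct and takes essentially the same route the paper intends: the paper itself only remarks that Theorem~\ref{peetremax} ``is a direct consequence of Lemma~\ref{maxunglem} together with Theorem~\ref{bagby},'' and your argument is precisely the detailed unwinding of that statement — rescaling each $f_\ell$ so that Lemma~\ref{maxunglem} applies with the fixed reference set $[-1,1]^d$ (making the constant $\ell$-independent), followed by the exponent change $p\mapsto ap$, $\theta\mapsto a\theta$ and $d$-fold application of Theorem~\ref{bagby}, which is exactly where the hypothesis $a>\max\{1/p,1/\theta\}$ enters.
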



\section{Function spaces of dominating mixed smoothness}
\label{sec:spaces}

In this section we introduce the function spaces under consideration, namely, the Besov and 
Triebel-Lizorkin spaces of dominating mixed smoothness. 
Note that the Sobolev spaces $\Wsp$ of mixed smoothness appear as a special case of the 
Triebel-Lizorkin spaces, namely $\Wsp = \mathbf{F}^s_{p,2}$ if $1<p<\infty$ and $s>0$. There are several equivalent
characterizations of these spaces, see e.g. \cite{Vyb06}. 
We begin with the usual definition of the spaces using a dyadic decomposition of unity on 
the Fourier side, following Triebel~\cite{Tr10}. Afterwards, we present an equivalent characterization by mixed
iterated differences. To begin with, we introduce the concept of {\em smooth dyadic decomposition of unity}. Let
$\{\eta_j\}_{j\in\N_0}\subset C_0^\infty(\R)$ such that 
\begin{enumerate}
	\item[$(i)$] $\supp(\eta_0)\subset\{t\colon|t|\le 2\}$,
	\item[$(ii)$] $\supp(\eta_j)\subset\{t\colon 2^{j-1}\le |t|\le  2^{j+1}\}$, $j\in \N$,
	\item[$(iii)$] for all $\ell\in\N_0$ it holds $\sup_{t,j} 2^{j\ell}|D^\ell\eta_j(t)|\le c_\ell <\infty$ and 
	\item[$(iv)$] $\sum_{j\in\N_0}\eta_j(t)=1$ for all $t\in\R$\,.
\end{enumerate}
Such a family of functions obviously exists. To approach functions spaces with dominating mixed smoothness we define
the family $\{\eta_j\}_{j\in\N_0^d}$ of $d$-variate functions as the tensor products
\begin{equation*}\label{eq:Psi}
\eta_j(x) \,=\, \prod_{i=1}^d \eta_{j_i}(x_i),
\end{equation*}
where $j=(j_1,\dots,j_d)\in\N_0^d$ and $x=(x_1,\dots,x_d)\in\R^d$.

\subsection{Spaces on $\R^d$} \label{subsec:spaces_R}

Let us start with the definition of the function spaces $\Aspt = \Aspt(\R^d)$ with 
$\A\in\{\mathbf{B},\mathbf{F}\}$ defined on the entire $\R^d$.

\begin{defi}[Besov space] \label{def:besov}
Let $0 < p,\theta\le\infty$, $s\in\R$, and 
$\{\eta_j\}_{j\in\N_0^d}$ be as above. 
The \emph{Besov space of dominating mixed smoothness} 
$\Bspt=\Bspt(\R^d)$ is the set of all $f\in \S'(\R^d)$ 
such that 
\[
\|f\|_{\Bspt} \,:=\, 
\Big(\sum_{j\in\N_0^d} 2^{s |j|_1 \theta}\, 
	\left\|\F^{-1}[\eta_j \F f]\right\|_p^\theta\Big)^{1/\theta} \,<\,\infty
\]
with the usual modification for $\theta=\infty$. 
\end{defi}

\begin{defi}[Triebel-Lizorkin space] \label{def:TL}
Let $0< p<\infty$, $0 < \theta\le\infty$, $s\in\R$, and 
$\{\eta_j\}_{j\in\N_0^d}$ be as above. 
The \emph{Triebel-Lizorkin space of dominating mixed smoothness} 
$\Fspt=\Fspt(\R^d)$ is the set of all $f\in \S'(\R^d)$ 
such that 
\[
\|f\|_{\Fspt} \,:=\, 
\Big\|\Big(\sum_{j\in\N_0^d} 2^{s |j|_1 \theta}\, 
	\left|\F^{-1}[\eta_j \F f]\right|^\theta\Big)^{1/\theta}\Big\|_p \,<\,\infty
\]
with the usual modification for $\theta=\infty$. 
\end{defi}

%
%
%

\begin{rem} In the special case $\theta=2$ and $1<p<\infty$ we put $\Wsp := \Fsptw$ which denotes the 
\emph{Sobolev spaces of dominating mixed smoothness}. It is well-known 
(cf.~\cite[Chapt.\ 2]{ST} for $d=2$), 
that in case $s\in\N_0$ the spaces $\Wsp$ can be equivalently normed by 
\[
\|f\|_{\Wsp} \,\asymp\, \Big(\sum_{\substack{\a\in\N_0^d\\ |\a|_\infty\le s}} \|D^\a f\|_p^p\Big)^{1/p}\,.
\]
 
\end{rem}

\begin{rem} Different choices of smooth dyadic decompositions of unity $\{\eta_j\}_{j\in\N_0}$ 
lead to equivalent (quasi-)norms. 
\end{rem}

\noindent The next lemma collects some frequently used embedding properties of the spaces. 

\begin{lem}\label{emb} Let $0<p<\infty$,  $0<\theta\leq \infty$, $s\in \R$ and 
$\A\in\{\B,\mathbf{F}\}$.
\begin{description}
 \item (i) If $s>\max\{1/p-1,0\}$ then 
 $$
    \A^{s}_{p,\theta} \hookrightarrow L_{\max\{p,1\}}(\R^d)\,.
 $$
 \item (ii) If $s>1/p$ then
 $$
    \A^s_{p,\theta} \hookrightarrow C(\R^d)\,.
 $$
 \end{description}
\end{lem}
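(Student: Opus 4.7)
The plan is to reduce both statements to the single case $\A = \mathbf{B}$, $\theta = \infty$, and then apply the dyadic decomposition of $f$ together with Nikol'skij's inequality for band-limited functions. First I would establish the two trivial embeddings $\mathbf{B}^s_{p,\theta} \hookrightarrow \mathbf{B}^s_{p,\infty}$ and $\mathbf{F}^s_{p,\theta} \hookrightarrow \mathbf{B}^s_{p,\infty}$. The first follows from $\ell^\theta \hookrightarrow \ell^\infty$ applied to the sequence $(2^{s|j|_1}\|f_j\|_p)_j$, where $f_j := \F^{-1}[\eta_j \F f]$. For the second, the pointwise estimate
\[
2^{s|j|_1}|f_j(x)| \;\le\; \Bigl(\sum_{k\in\N_0^d} 2^{s|k|_1\theta}|f_k(x)|^\theta\Bigr)^{1/\theta}
\]
followed by taking $L_p$-norms in $x$ yields $\|f\|_{\mathbf{B}^s_{p,\infty}} \le \|f\|_{\mathbf{F}^s_{p,\theta}}$. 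Hence it suffices to prove both assertions under the assumption $f \in \mathbf{B}^s_{p,\infty}$.

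Fix such $f$. Each $f_j$ has Fourier support in the axis-parallel box $\prod_{i=1}^d[-2^{j_i+1},2^{j_i+1}]$, so Nikol'skij's inequality gives, for any $0 < p \le q \le \infty$,
\[
\|f_j\|_q \;\lesssim\; 2^{|j|_1(1/p - 1/q)}\,\|f_j\|_p.
\]
For (i), I would take $q = \max\{p,1\} \ge 1$, apply the triangle inequality in $L_q$ to the decomposition $f = \sum_j f_j$ (series convergent in $\S'$), and compute
\[
\|f\|_q \;\le\; \sum_{j \in \N_0^d} \|f_j\|_q \;\lesssim\; \sum_{j \in \N_0^d} 2^{-(s - 1/p + 1/q)|j|_1}\bigl(2^{s|j|_1}\|f_j\|_p\bigr) \;\lesssim\; \|f\|_{\mathbf{B}^s_{p,\infty}},
\]
where the geometric factor is summable precisely because $s > 1/p - 1/q = \sigma_p$. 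For (ii), the same scheme with $q = \infty$ yields $\|f\|_\infty \lesssim \|f\|_{\mathbf{B}^s_{p,\infty}}$ under the condition $s > 1/p$. For the continuity claim, each $f_j$ is the inverse Fourier transform of a compactly supported smooth function, hence $f_j \in C^\infty(\R^d)$; the $L_\infty$-bound just derived shows that $\sum_j f_j$ converges absolutely and uniformly on $\R^d$, so the continuous limit must coincide with the original tempered distribution $f$ by uniqueness in $\S'(\R^d)$.

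The main (and rather mild) obstacle is precisely the upfront reduction to $\mathbf{B}^s_{p,\infty}$: without it one would need to distinguish several cases depending on whether $p$ or $\theta$ is smaller or greater than $1$, since the triangle inequality and Hölder's inequality behave differently in the quasi-Banach regime. The chosen reduction sidesteps all these case distinctions and lets the remaining estimates proceed uniformly; the strict inequalities $s > \sigma_p$ and $s > 1/p$ are then exactly what is needed to absorb the Nikol'skij factor into a summable geometric series.
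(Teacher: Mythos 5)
The paper does not give a proof of this lemma; it simply refers to Schmeisser--Triebel \cite[Chapt.\ 2]{ST}. Your proof is correct, self-contained, and uses what is in fact the standard argument in that reference and elsewhere: reduce everything to the largest space $\mathbf{B}^s_{p,\infty}$ (via $\ell^\theta\hookrightarrow\ell^\infty$ for the $\mathbf{B}$-scale and the obvious termwise bound plus integration for the $\mathbf{F}$-scale), then control $\|f\|_q$ for $q=\max\{p,1\}$ or $q=\infty$ by summing the dyadic blocks and absorbing the Nikol'skij factor $2^{|j|_1(1/p-1/q)}$ into a geometric series, which converges precisely under $s>\sigma_p$ resp.\ $s>1/p$. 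The only place worth double-checking in your write-up is the Nikol'skij inequality in tensorized form: for $f_j$ with Fourier support in the rectangle $\prod_i[-2^{j_i+1},2^{j_i+1}]$, one needs the anisotropic bound $\|f_j\|_q\lesssim\prod_i 2^{j_i(1/p-1/q)}\|f_j\|_p=2^{|j|_1(1/p-1/q)}\|f_j\|_p$, which holds by iterating the one-dimensional Nikol'skij inequality coordinate-by-coordinate; you invoke it correctly. Your closing argument that the uniformly convergent series of smooth (Paley--Wiener) functions coincides with $f$ by uniqueness of limits in $\S'(\R^d)$ is exactly the right way to upgrade the $L_\infty$-bound to membership in $C(\R^d)$. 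In short: correct, complete, and the same route the cited reference takes.
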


\bproof 
For a proof we refer to \cite[Chapt.\ 2]{ST}.\\
\eproof
 
Lemma~\ref{emb}(ii) gives a sufficient condition for 
$\A^s_{p,\theta}$ to consist only of continuous functions. 
This is of particular importance for the analysis of cubature rules since function
evaluations have to be defined reasonably.

\subsection{Spaces on the $d$-torus and non-periodic functions on the cube $[0,1]^d$}
\label{sect:torus}

Let $\tor=[0,1]^d$ and  $f:\tor \to \C$ be a function defined on $\R^d$ and supposed to be 
$1$-periodic in every component. We will define periodicity in a wider sense. A distribution $f\in \S'(\R)$ is a
periodic distribution from $\S'_{\pi}(\R^d)$ if and only if
$$
    f(\varphi) = f(\varphi(\cdot+k))\,,\quad \varphi \in \S(\R^d)\,,\ k\in \Z^d\,.
$$
Now we define
$$
  \A^s_{p,\theta}(\tor) = \{f \in \S'_{\pi}(\R^d)\,:\,\|f\|_{\A^s_{p,\theta}(\tor)}<\infty\}
$$
with
\[
\|f\|_{\Bspt(\tor)} \,:=\, 
\Big(\sum_{j\in\N_0^d} 2^{s |j|_1 \theta}\, 
	\|\F^{-1}[\eta_j \F f]\|_{L_p(\tor)}^\theta\Big)^{1/\theta}
\]
and
\[
\|f\|_{\Fspt(\tor)} \,:=\, 
\Big\|\Big(\sum_{j\in\N_0^d} 2^{s |j|_1 \theta}\, 
	|\F^{-1}[\eta_j \F f]|^\theta\Big)^{1/\theta}\Big\|_{L_p(\tor)},
\]
respectively. Note, that we replace the $L_p(\R^d)$-norms by $L_p(\tor)$-norms. 
If we denote by $\{c_k(f)\}_{k\in\Z^d}$ the Fourier coefficients of $f$, then 
we could write
\[
\F^{-1}[\eta_j \F f](x) \,=\, \sum_{k\in\Z^d} c_k(f)\, \eta_j(k)\, e^{2\pi i\, kx},
\]
see \cite[p.~150]{ST}.

Next we turn to the subspace of functions which are supported in a compact set $\Omega$. 
That is, we define for $s>\sigma_p= \max\{1/p -1,0\}$ the spaces 
\begin{equation}\label{ringspace}
\Ao (\Omega)\,:=\, \bigl\{ f\in \Aspt(\R^d)\colon \supp(f)\subset \Omega\bigr\}\,.
\end{equation}
Due to Lemma~\ref{emb}(i) this definition is reasonable. If $\Omega=[0,1]^d$ we write $\Ao$ instead of $\Ao([0,1]^d)$.
The space $\Ao$ can be interpreted as subspaces of 
$\A^s_{p,\theta}(\tor)$. Finally, we define the spaces $\Aspt([0,1]^d)$. For large enough $s$, 
i.e., $s>\sigma_p$, we define 
$\Aspt([0,1]^d)$ as the collection of all functions $f\in\Aspt$ restricted to the unit cube $[0,1]^d$, i.e.,
\[
\Aspt([0,1]^d) \,:=\, \bigl\{ f=g|_{[0,1]^d}\colon g\in \Aspt(\R^d)\bigr\}.
\]
The (quasi-)norm on these spaces is usually given by 
\[
\|f\|_{\Aspt([0,1]^d)} \,:=\, \inf\big\{\|g\|_{\Aspt}:\, g\in\Aspt\ \text{and}\ g|_{[0,1]^d}=f\big\}.
\]

\subsection{Characterization by mixed iterated differences}\label{subsec:differences}
In this section we will provide a characterization of the above defined spaces which is actually the classical way of
defining them, see for instance Nikol'skij~\cite{Nik75}, Amanov \cite{Am76}, Schmei{\ss}er, 
Triebel \cite{ST}, Temlyakov \cite{Te93} and the references therein. For us it will be convenient to replace the
classical moduli of continuity by so-called rectangular means of differences, see \eqref{rectm} below. 
This is is the counterpart of the ball means of differences for isotropic spaces, 
see~\cite[Thm.~2.5.11]{Tr83}. 

Let us first recall the basic concepts. For univariate functions $f:\R \to \C$ the $m$th difference operator
$\Delta_h^{m}$ is defined by
\begin{equation}\label{expdiff}
\Delta_h^{m}(f,t) := \sum_{j =0}^{m} (-1)^{m - j} \binom{m}{j} f(t + jh)\quad,\quad t\in \R,\ h\in \R\,.
\end{equation}

Let $e$ be any subset of
$[d]=\{1,...,d\}$. For multivariate functions $f:\R^d\to \C$ and $h\in \R^d$
the mixed $(m,e)$th difference operator $\Delta_h^{m,e}$ is defined by
\begin{equation*}
\Delta_h^{m,e} := \
\prod_{i \in e} \Delta_{h_i,i}^m\quad\mbox{and}\quad \Delta_h^{m,\emptyset} =  \operatorname{Id},
\end{equation*}
where $\operatorname{Id}f = f$ and $\Delta_{h_i,i}^m$ is the univariate operator applied to the $i$-th coordinate of $f$
with the other variables kept fixed. Whenever we write the ``product'' $\prod_{i \in e} A_i$ of operators $A_i$ we mean
the concatenation of the $A_i$, see also the small paragraph before Theorem \ref{bagby} above. 

Now we define the so-called rectangular means of differences by
\begin{equation}\label{rectm}
\mathcal{R}_{m}^e(f,t,x):= \int_{[-1,1]^d}|\Delta_{(h_1t_1,...,h_dt_d)}^{m,e}(f,x)|d h,\quad x\in \R^d, t\in
(0,1]^d\,.
\end{equation}
For $j\in \N_0^d$ we put $e(j) = \{i:j_i \neq 0\}$ and $e_0(j)=[d]\backslash e(j)$. We have the following theorems.

\begin{thm}\label{diff} Let $0<p, \theta\leq \infty$ and $s>\sigma_{p}$. Let further $m \in \N$ be a natural number with
$m>s$\,. Then 
$$
    \|f\|_{\Bspt} \asymp \|f\|_{\Bspt}^{(m)}\quad,\quad f\in L_1(\R^d)\,,
$$
where 
\begin{equation*}\label{f4}
     \|f\|_{\Bspt}^{(m)} := \Big(\sum\limits_{j\in \N_{0}^d} 2^{s|j|_1 \theta}\|
\mathcal{R}^{e(j)}_m(f,2^{-j},\cdot)\|_p^{\theta}\Big)^{1/\theta}\,.
\end{equation*}
In case $\theta=\infty$ the sum above 
is replaced by the supremum over $j$. Here 
$2^{-j}:=(2^{-j_1},...,2^{-j_d})$\,.

\end{thm}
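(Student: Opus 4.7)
The plan is to establish $\|f\|_{\Bspt}^{(m)} \lesssim \|f\|_{\Bspt}$ and $\|f\|_{\Bspt} \lesssim \|f\|_{\Bspt}^{(m)}$ separately by means of Littlewood-Paley arguments. Throughout I decompose $f = \sum_{k\in\N_0^d} f_k$ with $f_k := \F^{-1}[\eta_k \F f]$, rely on pointwise estimates through the Peetre maximal function, and finish by invoking the vector-valued inequality of Theorem~\ref{peetremax} with an exponent $a > 1/p$. Lemma~\ref{emb}\,(i) together with $s > \sigma_p$ guarantees convergence of the decomposition in $L_{\max\{p,1\}}$, so that the rectangular means $\mathcal{R}_m^{e(j)}(f, 2^{-j}, \cdot)$ are well-defined almost everywhere.

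For the direction $\|f\|_{\Bspt}^{(m)} \lesssim \|f\|_{\Bspt}$, the key step is a pointwise bound of the form
$$
 \mathcal{R}_m^{e(j)}(f, 2^{-j}, x) \,\lesssim\, \sum_{k \in \N_0^d} w_{j,k}\, P_{2^k, a}(f_k)(x)\,, \qquad w_{j,k} \,:=\, \prod_{i \in e(j)} \min\bigl\{1,\, 2^{(k_i - j_i)m}\bigr\}\,.
$$
In directions $i \in e(j)$ with $k_i \le j_i$ (low frequencies) this follows from the Taylor-integral representation of $\Delta^m_{h_i,i}$ combined with the Bernstein-type inequality $P_{2^k, a}(\partial_i^m f_k) \lesssim 2^{k_i m}\, P_{2^k, a}(f_k)$ obtained by rescaling Lemma~\ref{maxunglem}, whereas in directions $i \in e(j)$ with $k_i > j_i$ (high frequencies) it follows from a trivial $L_\infty$-bound on $\Delta^m_{h_i, i}$ coupled with a suitable rescaling of the Peetre maximal function. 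Taking $L_p$-quasinorms (via the $p$-triangle inequality if $p < 1$), multiplying by $2^{s|j|_1}$, $\ell_\theta$-summing in $j$ and applying Theorem~\ref{peetremax} reduces matters to the boundedness of the discrete convolution with kernel $2^{s(|j|_1 - |k|_1)}\, w_{j,k}$ on $\ell_\theta(\N_0^d)$; the latter holds by a geometric-series calculation thanks to the assumptions $m > s$ and $s > 0$.

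For the converse direction $\|f\|_{\Bspt} \lesssim \|f\|_{\Bspt}^{(m)}$, the strategy is a local-means reconstruction of the block $f_j$ as an integral of iterated $m$th differences of $f$. Fix $j \in \N_0^d$: for each coordinate $i$ with $j_i \ge 1$, the univariate function $\eta_{j_i}$ vanishes identically near $0$ and therefore has arbitrarily many vanishing moments, so that a direct computation produces a representation of its inverse Fourier transform of the form $2^{j_i}\, \Delta^m_{-2^{-j_i}, i} \Psi(2^{j_i}\,\cdot\,)$ for a fixed Schwartz function $\Psi$ (for $j_i = 0$ one uses an unmodified low-pass kernel, and the factor simply disappears). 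Tensorising over $i$ and convolving with $f$ yields the representation
$$
 f_j(x) \,=\, \int_{\R^d} K_j(h)\, \Delta^{m, e(j)}_h(f, x)\, dh\,,
$$
with $|K_j(h)| \lesssim 2^{|j|_1} \prod_i (1 + 2^{j_i}|h_i|)^{-L}$ for any prescribed $L$. Bounding pointwise in terms of $\mathcal{R}_m^{e(j)}(f, 2^{-j}, \cdot)$ (together with rapidly decaying Peetre-type tails) and invoking Theorem~\ref{peetremax} once more finishes the argument.

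The main obstacle is the lower bound in the quasi-Banach regime $p < 1$, where the triangle inequality must be replaced by a $p$-triangle inequality; the Peetre exponent $a$ then has to be chosen strictly larger than $1/p$, and the restriction $s > \sigma_p$ is precisely what ensures compatibility with the embedding in Lemma~\ref{emb}\,(i) that gives meaning to the rectangular means. A secondary subtlety lies in the asymmetric structure of the coordinate set $e(j) = \{i : j_i \neq 0\}$: coordinates with $j_i = 0$ carry no difference operator at all, so the reconstruction kernel in those directions must be a pure low-pass filter, and the geometric-summation arguments in both inequalities have to be carefully adapted to this asymmetry.
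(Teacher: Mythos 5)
Your plan for the direction $\|f\|_{\Bspt}^{(m)}\lesssim\|f\|_{\Bspt}$ is a correct and self-contained Littlewood--Paley argument (pointwise bound through the Peetre maximal function with weights $w_{j,k}$, scalar maximal inequality, discrete convolution on $\ell_\theta(\N_0^d)$ using $m>s>0$). It differs from the paper's Step~1, which instead quotes the continuous-parameter difference characterization \cite[Thm.~3.4.1]{Ul06} and simply discretizes it; your version re-derives that bound from scratch, which is arguably more transparent, though in the tensor setting you should be a bit more explicit about the directions $i\notin e(j)$, where no difference is taken and the decay in $k_i$ comes only from the factor $2^{-sk_i}$.

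The converse direction has a genuine gap. You claim that since $\eta_{j_i}$ vanishes near the origin, one has $\F^{-1}\eta_{j_i}(t)=2^{j_i}\Delta^m_{-2^{-j_i},i}\Psi(2^{j_i}\cdot)(t)$ for a fixed Schwartz function $\Psi$. On the Fourier side this amounts to $\eta_{j_i}(\xi)=\bigl(e^{-2\pi i\,2^{-j_i}\xi}-1\bigr)^m\widehat{\Psi}(\xi/2^{j_i})$. The symbol $\bigl(e^{-2\pi i\,2^{-j_i}\xi}-1\bigr)^m$ vanishes at $\xi\in 2^{j_i}\Z$, and $\xi=2^{j_i}$ lies inside the dyadic annulus $\supp\eta_{j_i}\subset\{2^{j_i-1}\le|\xi|\le2^{j_i+1}\}$, so $\widehat{\Psi}$ would have to be singular there; the claimed factorization does not hold for a generic decomposition of unity. (Having ``arbitrarily many vanishing moments'' does not imply that the kernel is an $m$-th difference of a Schwartz function at the specific scale $2^{-j_i}$.) One can repair this by using a step size $c\,2^{-j_i}$ with $c$ small, but then what you obtain is
\[
 f_j(x)\;=\;\int_{\R^d}\Theta_j(y)\,\Delta^{m,e(j)}_{-c\,2^{-j}}(f,x-y)\,dy\,,
\]
a convolution in the \emph{shift} variable $y$ at a \emph{fixed} step size, not the claimed integral $\int K_j(h)\,\Delta^{m,e(j)}_h(f,x)\,dh$ over the step-size variable $h$ at fixed $x$. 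Passing from the former to a bound by $\mathcal{R}^{e(j)}_m(f,2^{-j},\cdot)$, which averages over step size, is an extra non-trivial step that you do not supply (it requires a maximal-function argument relating a single large-step difference to averaged small-step differences). Moreover, your displayed formula cannot hold as stated for indices $j$ with some $j_i=0$: on the Fourier side one would need $\eta_0(\xi_i)\equiv 1$ in those directions, which is false, so the low-pass factors $\eta_0$ force an additional convolution structure in the coordinates $i\notin e(j)$ that the formula does not show. The paper avoids all of these issues by proving the converse via the local means characterization (Proposition~\ref{local}): instead of reconstructing $f_j$ itself, it bounds the local mean $\Psi_j\ast f$, where $\Psi_0$ is built, following \cite[3.3.1,~3.3.2]{Tr92}, from a compactly supported kernel in such a way that $\Psi_j\ast f$ has, by explicit algebra, a representation as a \emph{compactly supported} integral over the step size of iterated differences --- precisely what is needed to compare with $\mathcal{R}^{e(j)}_m(f,2^{-j},\cdot)$.
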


\begin{thm}\label{fdiff} Let $0 < p <\infty$, $0< \theta\leq \infty$ and $s>\sigma_{p,\theta}$. Let further $m \in \N$
be a natural number with $m>s$\,. Then 
$$
  \|f\|_{\Fspt} \asymp \|f\|^{(m)}_{\Fspt}\quad,\quad f\in L_1(\R^d)\,,
$$
where 
$$
  \|f\|^{(m)}_{\Fspt}:=\Big\|\Big(\sum\limits_{j\in \N_{0}^d}
2^{s|j|_1\theta}\mathcal{R}^{e(j)}_m(f,2^{-j},\cdot)^{\theta}\Big)^{1/\theta}\Big\|_p\,.
$$
\end{thm}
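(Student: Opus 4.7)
The plan is to establish the two-sided inequality $\|f\|_{\Fspt} \asymp \|f\|^{(m)}_{\Fspt}$ by handling each direction separately, using the Littlewood--Paley decomposition $f = \sum_{k\in\N_0^d} f_k$ (where $f_k := \F^{-1}[\eta_k \F f]$) as the bridge between the Fourier-analytic definition and the difference-based norm. Throughout I fix $a$ with $a>\max\{1/p,1/\theta\}$ so that Theorem~\ref{peetremax} is available.

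\emph{Step 1 (easy direction $\|f\|^{(m)}_{\Fspt} \lesssim \|f\|_{\Fspt}$).} Fix $j,k\in\N_0^d$. For each $i\in e(j)$, the one-dimensional difference $\Delta^m_{h_i 2^{-j_i},i}$ acting on the band-limited function $f_k$ (whose Fourier transform in the $i$-th direction is supported in $\{|\xi_i|\lesssim 2^{k_i}\}$) satisfies the two pointwise bounds
\[
|\Delta^m_{h_i 2^{-j_i},i} f_k(x)| \,\lesssim\, (2^{k_i-j_i})^m\, P_{2^k,a} f_k(x)
\quad\text{and}\quad
|\Delta^m_{h_i 2^{-j_i},i} f_k(x)| \,\lesssim\, P_{2^k,a} f_k(x),
\]
the first by iterating the mean value theorem and using the Peetre maximal bound for derivatives from Lemma~\ref{maxunglem}, the second being trivial from the definition of $\Delta^m$. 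Taking the minimum yields a factor $2^{-m(j_i-k_i)_+}$, and since $|h_i|\le 1$ these bounds survive integration, giving
\[
\mathcal{R}_m^{e(j)}(f_k, 2^{-j}, x) \,\lesssim\, \Bigl(\prod_{i\in e(j)} 2^{-m(j_i-k_i)_+}\Bigr)\, P_{2^k,a} f_k(x).
\]
Summing over $k$ and using the quasi-triangle inequality in $\ell_\theta$ expresses $\bigl(2^{s|j|_1}\mathcal{R}_m^{e(j)}(f, 2^{-j},\cdot)\bigr)_j$ as a discrete convolution of $\bigl(2^{s|k|_1} P_{2^k,a} f_k(\cdot)\bigr)_k$ against a kernel which is summable coordinate-wise because $m>s$; the Young-type estimate in $\ell_\theta$ (together with $s>\sigma_{p,\theta}$ for $\theta<1$) then gives a pointwise dominance of the $\ell_\theta$-norms. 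Applying Theorem~\ref{peetremax} to take $L_p$ completes this direction.

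\emph{Step 2 (hard direction $\|f\|_{\Fspt} \lesssim \|f\|^{(m)}_{\Fspt}$).} Here I would build, for each $k\in\N_0^d$, an auxiliary Schwartz function $\varphi_k$ whose tensor factors in the directions $i\in e(k)$ are pure $m$th differences of a compactly Fourier-supported kernel and whose tensor factors in the directions $i\notin e(k)$ (i.e.\ $k_i=0$) are bounded Schwartz functions. A standard reconstruction argument, tracking that $\supp\eta_k\subset\prod_i\{|\xi_i|\lesssim 2^{k_i}\}$, then yields a representation of the form
\[
f_k(x) \,=\, \int_{[-1,1]^d} K_k(h)\, \Delta^{m,e(k)}_{(h_1 2^{-k_1},\dots,h_d 2^{-k_d})}(f,x)\, dh,
\]
with bounded kernel $K_k$. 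Taking absolute values gives the crucial pointwise bound $|f_k(x)| \lesssim \mathcal{R}_m^{e(k)}(f, 2^{-k}, x)$, after which the $L_p(\ell_\theta)$-norm is taken directly to conclude.

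\emph{Main obstacle.} The subtle point is the interplay between the coordinate-wise structure of mixed smoothness and the decomposition $e(j)\cup e_0(j)=[d]$: for $i\notin e(j)$ no difference is taken and no decay factor $2^{-m(j_i-k_i)_+}$ appears, but this is compensated by the fact that $j_i=0$ contributes nothing to the weight $2^{s|j|_1}$, so the matching is tight and cannot be weakened. In Step~2 a careful combinatorial enumeration over subsets $e\subset[d]$ is needed so that the kernel $K_k$ genuinely realises the desired $m$th difference only on active coordinates; together with the sharp summability condition $m>s$ and the natural lower bound $s>\sigma_{p,\theta}$ (ensuring that sums of characteristic-like terms stay bounded in $L_p(\ell_\theta)$ via the Fefferman--Stein / Peetre inequalities), this is where all hypotheses are jointly consumed.
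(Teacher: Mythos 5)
Your overall plan (two-sided estimate via the Littlewood--Paley decomposition $f=\sum_k f_k$, with the Peetre and Fefferman--Stein maximal inequalities doing the $L_p(\ell_\theta)$ work, and the tension between the weight $2^{s|j|_1}$ and the decay factors $2^{-m(j_i-k_i)_+}$ being the heart of the matter) is reasonable, but both steps contain gaps, and Step~2 in particular misses the ingredient on which the paper's proof actually hinges.

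In Step~1 your ``trivial'' bound $|\Delta^m_{h_i2^{-j_i},i}f_k(x)|\lesssim P_{2^k,a}f_k(x)$ does not hold uniformly in $j,k$. By the very definition of the Peetre maximal function, replacing $f_k(x)$ by $f_k(x+lh_i2^{-j_i}e_i)$ with $|h_i|\le1$ costs a factor $(1+m\,2^{k_i-j_i})^a$; since Theorem~\ref{peetremax} forces $a>\max\{1/p,1/\theta\}$, this factor defeats the weight $2^{s(j_i-k_i)}$ in the range $k_i>j_i$ whenever $s\le a$, which is the generic case. The remedy is to bound the \emph{averaged} translate by the direction-wise Hardy--Littlewood maximal function, $\int_{-1}^1|f_k(x+lh_i2^{-j_i}e_i)|\,dh_i\lesssim M_if_k(x)$ (after the $\lambda$-power trick when $\min\{p,\theta\}\le1$), and to invoke Theorem~\ref{bagby} rather than the Peetre inequality for those blocks. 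The paper sidesteps this by quoting the continuous-parameter difference characterization \cite[Thm.~3.4.1]{Ul06} and discretizing.

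Step~2 is the more serious problem. Having fixed $f_k:=\F^{-1}[\eta_k\F f]$, you claim a representation $f_k(x)=\int_{[-1,1]^d}K_k(h)\,\Delta^{m,e(k)}_{(h_12^{-k_1},\dots,h_d2^{-k_d})}(f,x)\,dh$ with bounded $K_k$, and hence the pointwise bound $|f_k(x)|\lesssim\mathcal{R}^{e(k)}_m(f,2^{-k},x)$. Neither can hold. On the Fourier side the representation would read $\eta_k(\xi)=\int K_k(h)\prod_{i\in e(k)}(e^{2\pi i 2^{-k_i}h_i\xi_i}-1)^m\,dh$, whose right-hand side, after expanding the product, is a finite linear combination of dilates of $\F K_k$ and therefore an entire function of exponential type by Paley--Wiener; it can agree with the compactly supported $\eta_k$ only if it vanishes identically. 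The pointwise bound also fails outright: for $k=0$ one has $e(0)=\emptyset$, so $\mathcal{R}^\emptyset_m(f,1,x)=2^d|f(x)|$, yet $|f_0(x_0)|\le C\,|f(x_0)|$ is violated at any zero $x_0$ of $f$ where the low-frequency piece does not vanish (take $f=f_{\rm low}+f_{\rm high}$ with disjoint dyadic frequency supports and $f_{\rm low}(x_0)=-f_{\rm high}(x_0)\neq0$, so that $f_0=f_{\rm low}$). What you are missing is the freedom to \emph{change the kernel}: the paper invokes the local means characterization (Proposition~\ref{local}), which allows replacing $\{\eta_j\}$ by a family $\{\Psi_j\}$ compactly supported in \emph{space} rather than frequency, with prescribed moment cancellations, and then uses Triebel's construction \cite[3.3.2]{Tr92} to express $\Psi_0*f$ and $\Psi_k*f$ explicitly as weighted $h$-averages of $\Delta^m_h f$. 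The outcome is $|\Psi_j*f(x)|\lesssim\sum_{u\in\{0,1\}^d}\mathcal{R}^{e(j+u)}_m(f,2^{-j+u},x)$ — note the sum over neighbours, including the identity term in the inactive coordinates — rather than the single-term bound you assert. Without passing from the Fourier-analytic definition to the local-means characterization, the hard direction does not close.
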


\medskip

Since the characterizations of the spaces $\Bspt$ and $\Fspt$ that are given in 
Theorems~\ref{diff}~\&~\ref{fdiff} are, as far as we know, not proven in the literature, 
we will give a proof here for the convenience of the reader.

\bigskip
\noindent
{\bf Proof of Theorems~\ref{diff}~\&~\ref{fdiff}.}
The statements in Theorems \ref{diff}~\&~\ref{fdiff} are ``discretized versions'' of the
characterizations given in \cite{Ul06}. In addition, the range  of the sum $\sum_{j\in
\N_0^d}$ is crucial. 
Here, we mainly 
have to establish the
relation $\|f\|_{\Fspt} \asymp \|f\|^{(m)}_{\Fspt}$ which is done via the characterization 
for the spaces $\Bspt$ and $\Fspt$ via local means, see \cite{Vyb06}, \cite{TU08}. 
Given a function $\Psi:\R \to \C$, we denote by
$L_\Psi \in \N$ the maximal number such that $\Psi$ has vanishing moments,
i.e.,
$$
\int_{\R} t^{\alpha} \Psi(t)\,dt = 0\quad,\quad \alpha =
0,...,L_\Psi\,.
$$
Let $\Psi_0 \in S(\R)$ be a function satisfying the following conditions
\be\label{condphi}
\int_{\R} \Psi_0(t)\,dt \not=0 \quad\text{and}\quad L_{\Psi}\geq R\ \ \text{for}\ \
 \Psi(t) = \Psi_0(t)-\frac{1}{2}\Psi_0(t/2)\,,
\ee
for some $R \in \N$. For $k\in \N$ we put $\Psi_{k}(t)=2^k\Psi(2^kt),\  t\in \R$. If $j \in \N_0^d$ we denote
\beqq
   \Psi_{j}(x_1,...,x_d) = \Psi_{j_1}(x_1)\cdot...\cdot
   \Psi_{j_d}(x_d)\,,\ \ x\in\R^d\,.
\eeqq
We have the following proposition, see \cite[Thm. 1.23]{Vyb06} and \cite{TU08}.
\begin{prop}\label{local} Let $s \in \R$ and $\Psi_0$ be given
by \eqref{condphi} with $R+1> s$.\\
{\em (i)} Let $0<p,\theta\leq \infty$. Then
$\Bspt$ is the collection of all $f\in S'(\R^d)$ such
that
\beqq
     \|f\|_{\Bspt} = \Big(\sum_{j\in \N_0^d} 2^{|j|_1s\theta}\|\Psi_{j}\ast
          f\|_p^{\theta}\Big)^{1/\theta} <\infty\,.
\eeqq
{\em (ii)} Let $0<p<\infty$ and $0<\theta\leq \infty$. Then ${\Fspt}$ is the collection of all
$f\in S'(\R^d)$ such that
\beqq
    \|f\|_{\Fspt} =\Big \|\Big(\sum_{j\in \N_0^d }2^{|j|_1s\theta}|\Psi_{j}\ast
     f|^{\theta}\Big)^{1/\theta}\Big\|_p      <\infty\,.
\eeqq
\end{prop}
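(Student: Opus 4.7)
My plan is to derive the local means characterization from the Fourier-analytic definition by a two-sided pointwise comparison between the dyadic blocks $f_k := \F^{-1}[\eta_k \F f]$ and the local means $\Psi_j \ast f$, adapting the standard local-means technique to the dominating mixed smoothness setting as in \cite{Vyb06,TU08}. Writing $f = \sum_{k \in \N_0^d} f_k$ in $\S'(\R^d)$, linearity of convolution yields $\Psi_j \ast f = \sum_k \Psi_j \ast f_k$, so the task reduces to the pointwise estimate
\begin{equation*}
    |\Psi_j \ast f_k(x)| \,\lesssim\, 2^{-|j-k|_1\delta}\, P_{2^{\min(j,k)},a}\,f_k(x), \qquad j,k\in\N_0^d,
\end{equation*}
for some $\delta > s$ and some $a > \max\{1/p,1/\theta\}$, together with a symmetric estimate bounding $|f_k(x)|$ by the Peetre maximal functions of the $\Psi_j \ast f$.

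Both pointwise estimates I would prove coordinate by coordinate, exploiting the tensor-product structure of $\Psi_j$ and $\eta_k$. Fixing $i \in [d]$, two cases arise. If $j_i \geq k_i$, the vanishing moments of $\Psi$ (available because $L_\Psi \geq R$ with $R+1 > s$) give $|\widehat{\Psi}(\xi_i/2^{j_i})| \lesssim |\xi_i/2^{j_i}|^{R+1}$ on the support of $\eta_{k_i}$, producing a factor $2^{-(j_i-k_i)(R+1)}$. If $j_i < k_i$, the Schwartz decay of $\widehat{\Psi}$ yields a factor $2^{-(k_i - j_i)N}$ for arbitrarily large $N$. A standard kernel-versus-Peetre-maximal bound (cf.\ Lemma~\ref{maxunglem}) converts this Fourier-side estimate into the pointwise one displayed above, since after rescaling the convolution kernel is a band-limited Schwartz object against which the Peetre parameter $a$ can be absorbed. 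Tensorizing across the $d$ coordinates combines the one-dimensional factors into $2^{-|j-k|_1 \delta}$ with $\delta = \min(R+1,N)$, which can be taken strictly larger than $s$.

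Given the pointwise estimate, the $\mathbf{B}$-norm equivalence follows from a routine $\ell^\theta$-convolution argument on $\Z^d$ (Minkowski's inequality for $\theta\geq 1$, the $\ell^\theta$-quasi-triangle inequality otherwise): weighting by $2^{s|j|_1}$ and using $2^{s(|j|_1-|k|_1)} \leq 2^{s|j-k|_1}$, the surplus decay $2^{-|j-k|_1(\delta-s)}$ turns the sum in $k$ into a bounded convolution kernel on $\Z^d$. Taking $L_p$-norms and invoking Theorem~\ref{peetremax} to absorb $P_{2^{\min(j,k)},a} f_k$ by $f_k$ itself up to a constant yields the desired one-sided inequality. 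For the $\mathbf{F}$-scale the argument has the same structure, but the full vector-valued Peetre inequality must be applied inside the $L_p(\ell^\theta)$-quasinorm — which is exactly where the restriction $a > \max\{1/p,1/\theta\}$ is needed. The converse direction is obtained symmetrically, using a Calder\'on-type reproducing identity generated from $\Psi_0$ that reconstructs $f_k$ from the $\Psi_j \ast f$.

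The main obstacle is to establish the pointwise estimate uniformly in both indices while respecting the product structure of mixed smoothness: in $d$ dimensions one cannot collapse $j$ and $k$ into a single scale, so the estimate must carry a genuine anisotropic decay profile $2^{-|j-k|_1\delta}$, rather than the weaker $2^{-\max_i|j_i-k_i|\delta}$, and the compatibility between the vanishing moments of $\Psi$ and the Peetre parameter $a$ must be checked separately in every coordinate. Once this is secured, everything else reduces to standard maximal inequalities already stated in Section~\ref{sec:prel}.
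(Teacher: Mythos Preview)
The paper does not prove Proposition~\ref{local}; it simply quotes the result from \cite[Thm.~1.23]{Vyb06} and \cite{TU08}. Your proposal is therefore not being compared against an in-paper argument but against the cited literature, and your sketch is precisely the standard local-means machinery developed there (Rychkov-type pointwise estimates, tensorized across coordinates, combined with the vector-valued Peetre maximal inequality of Theorem~\ref{peetremax}). In that sense your approach and the paper's ``approach'' coincide.

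One technical point worth tightening: in the displayed pointwise estimate you write $P_{2^{\min(j,k)},a}\,f_k$, but the Peetre maximal inequality in Theorem~\ref{peetremax} requires the Peetre parameter $b^\ell$ to dominate the Fourier support of $f_\ell$. Since $\supp(\F f_k)$ lives at scale $2^k$ componentwise, the natural and usable estimate is
\[
|\Psi_j \ast f_k(x)| \;\lesssim\; \Big(\prod_{i:\,j_i\ge k_i} 2^{-(j_i-k_i)(R+1)}\Big)\Big(\prod_{i:\,j_i<k_i} 2^{-(k_i-j_i)(N-a)}\Big)\, P_{2^{k},a}\,f_k(x),
\]
with $N$ chosen large enough that $N-a>s$. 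The loss $2^{(k_i-j_i)a}$ in the second product comes exactly from passing the convolution kernel (spread at scale $2^{-j_i}$) through the Peetre weight at the finer scale $2^{k_i}$; it is harmless because the Schwartz decay on the Fourier side contributes an arbitrary power $N$. With this correction the rest of your outline---the $\ell^\theta(\Z^d)$ convolution argument, the appeal to Theorem~\ref{peetremax}, and the Calder\'on reproducing identity for the converse---goes through exactly as in \cite{Vyb06,TU08}.
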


Now we are in a position to prove Theorem \ref{fdiff}. The proof of Theorem \ref{diff} is similar but technically less
involved.
\begin{proof}[Proof of Theorem~\ref{fdiff}] {\it Step 1.} We prove the inequality
\be\label{in}
\|f\|^{(m)}_{\Fspt}\ \lesssim\ \|f\|_{\Fspt}\,.
\ee
This inequality is a consequence of \cite[Thm.\ 3.4.1]{Ul06}. Indeed, from the equivalent norm
\beqq
\|f\|_{\Fspt} \asymp 
\|f\|_p+\sum_{e\subset [d], e\not=\emptyset} 
\bigg\| \bigg(\int_{(0,\infty)^{|e|}} \Big(\prod_{i\in e}t_i^{-s\theta}\Big) \mathcal{R}^{e}_m(f,t,\cdot)^{\theta}
\prod_{i\in e}\frac{dt_i}{t_i}\bigg)^{1/\theta}\bigg\|_p\,,
\eeqq
see \cite[Thm.\ 3.4.1]{Ul06}, we have
\beqq
\|f\|_{\Fspt} \gtrsim
\|f\|_p+\sum_{e\subset [d], e\not=\emptyset} 
\bigg\| \bigg(\int_{(0,1)^{|e|}} \Big(\prod_{i\in e}t_i^{-s\theta}\Big) \mathcal{R}^{e}_m(f,t,\cdot)^{\theta}
\prod_{i\in e}\frac{dt_i}{t_i}\bigg)^{1/\theta}\bigg\|_p\,.
\eeqq
Discretizing the right-hand side of the above inequality we obtain
\beqq
&&\Big\|\Big(\sum\limits_{j\in \N_{0}^d}
2^{|j|_1s\theta}\mathcal{R}^{e(j)}_m(f,2^{-j},\cdot)^{\theta}\Big)^{1/\theta}\Big\|_p\\
&&\qquad\qquad\lesssim ~\|f\|_p+\sum_{e\subset [d], e\not=\emptyset} 
\bigg\| \bigg(\int_{(0,1)^{|e|}} \Big(\prod_{i\in e}t_i^{-s\theta}\Big) \mathcal{R}^{e}_m(f,t,\cdot)^{\theta}
\prod_{i\in e}\frac{dt_i}{t_i}\bigg)^{1/\theta}\bigg\|_p\,,
\eeqq
which implies \eqref{in}\,.\\
{\it Step 2.} We prove the reverse inequality in \eqref{in}. This time we rely on the intrinsic characterization of
$\Fspt$ via local means, see Proposition \ref{local}.\\
{\it Substep 2.1.} Preparation. According to \cite[3.3.1, 3.3.2]{Tr92}, for $m\in \N$ there
 exist function $\psi_m(t)$ with 
 $$\supp(\psi) \subset
 \Big(-\frac{1}{m},\frac{1}{m}\Big)\qquad \text{ and }\qquad \int_{\R}\psi_m(t)\,dt = 1$$
 such that
\beqq
     \Psi_0(t) = \frac{(-1)^{m+1}}{m!}
     \sum_{u=1}^{m}\sum_{v=1}^{m}(-1)^{u+v}\binom{m}{u}\binom{m}{v}v^{m}(uv)^{-1}\psi\Big(\frac{t}{uv}\Big)~,~t\in\R\,,
\eeqq
 satisfies (\ref{condphi}) with $R = m$. Putting
\beqq
     \eta(t) = \frac{(-1)^{m+1}}{m!}
     \sum_{v=1}^{m}(-1)^{m-v}\binom{m}{v}v^{m}v^{-1}\psi\Big(\frac{t}{v}\Big)~,~t\in
     \R\,,
\eeqq
then we have $\supp(\eta) \in (-1,1)$ and
 moreover
\beqq
     \Psi_0(t) = \sum_{u=1}^{m}(-1)^{m-u}\binom{m}{u}u^{-1}\eta\Big(\frac{t}{u}\Big)\,.
\eeqq
 A simple computation gives for a univariate function $g$
\beq
        \Psi_0*g(t) &=&\int_{\R} \Psi_0(h)g(t-h)\,dh \nonumber\\
        &=& \int_{\R} \eta(-h)\sum_{u=1}^{m}(-1)^{m-u}\binom{m}{u}g(t+uh)\,dh\nonumber\\
 	&=&\int_{-1}^{1} \eta(-h)\big[\Delta^{m}_{h}g(t)-(-1)^{m}g(t)\big]\,dh\,  \label{equi-1}
\eeq
and
\beq
      \Psi_k*g(t)    &=&\int_{\R} \Psi(h)g(t-2^{-k}h)\,dh\nonumber\\
         &=& \int_{\R}
         \Big[\eta(-h)-\frac{1}{2}\eta\Big(-\frac{h}{2}\Big)\Big]\Delta^{m}_{2^{-k}h}g(t)\,dh\,\nonumber\\
&=&\int_{-1}^{1}\eta(-h)\big[\Delta^{m}_{2^{-k}h}-\Delta^{m}_{2^{-k+1}h}\big]g(t)\,dh\,.\label{equi-2}
\eeq
{\it Substep 2.2.} We define the function $\eta(\cdot)$ on $\R^d$ as $d$-fold tensor product, i.e.,
\beqq
\eta(x)=\eta(x_1)\cdot...\cdot\eta(x_d),\ \ x=(x_1,...,x_d)\in \R^d.
\eeqq
From \eqref{equi-1} and \eqref{equi-2} we have for $j\in \N_0^d$
\beqq
\Psi_{j}\ast f(x)= \int_{[-1,1]^d} \eta(-h)\Big[\prod_{i\in
e(j)}\big(\Delta^{m}_{2^{-j_i}h_i,i}-\Delta^{m}_{2^{-j_i+1}h_i,i} \big)\prod_{i\in e_0(j)}\big(\Delta^{m}_{h_i,i}
-(-1)^{m}\text{Id}_i \big)  \Big]f(x)dh\,.
\eeqq
Then we obtain
\beqq
|\Psi_{j}\ast f(x)|&\leq & \sum_{{u}\in \{0,1\}^d} \mathcal{R}^{e(j+u)}_m(f,2^{-j+u},x)
\eeqq
which leads to
\beqq
\Big \|\Big(\sum_{j\in \N_0^d }2^{s|j|_1 \theta}|\Psi_{j}\ast f|^{\theta}\Big)^{1/\theta}\Big\|_p &\lesssim&
\Big\|\Big(\sum\limits_{j\in \N_{0}^d} 2^{s|j|_1\theta}\sum_{{u}\in \{0,1\}^d}
\mathcal{R}^{e(j+u)}_m(f,2^{-j+u},\cdot)^{\theta}\Big)^{1/\theta}\Big\|_p\\
   &\lesssim&     \Big\|\Big(\sum\limits_{j\in \N_{0}^d} 2^{s|j|_1\theta}
\mathcal{R}^{e(j)}_m(f,2^{-j},\cdot)^{\theta}\Big)^{1/\theta}\Big\|_p\,.
\eeqq
In view of Proposition \ref{local} we finish the proof.\\
\end{proof}


\section{Change of variables}\label{sec:proof_modi}

Let $\varphi \in C^{k}_{0}(\R)$ such that $\supp(\varphi) \subset [0,1]$, $\int_0^1 \varphi(t)\,dt = 1$, $\varphi(t)>0$
on $(0,1)$ and $\varphi^{(k)}$ has only finitely many zeros in $[0,1]$. Let further 
\begin{equation*}\label{def:psi}
    \psi(t) := \int_{-\infty}^t \varphi(\xi)\,d\xi,\quad t\in \R\,,
\end{equation*}
then $\psi'(t)=\varphi(t)$, $t\in \R$. We  study the boundedness of the ``change of variable'' operator 
$$
    T^{\psi}_d\colon\, f(x) \;\mapsto\; \Big(\prod\limits_{i=1}^d \psi'(x_i)\Big)\cdot 
			f\bigl(\psi(x_1),...,\psi(x_d)\bigr),\qquad 
			f\in L_1(\R^d)\,, \ x\in \R^d\,,
$$
in the situations
$$
    T^{\psi}_d:\Bspt(\R^d) \to \Bspt(\R^d) \quad \mbox{ and } \quad T^{\psi}_d:\Fspt(\R^d) \to \Fspt(\R^d)\,.
$$
The following theorems represent the main results of the paper.
\medskip

\begin{thm}\label{changemixed-B} Let $1\leq p\leq \infty$, $0<\theta \leq \infty$, $s>0$  and $\varphi\in C^k_0(\R)$
as above with $k> \lfloor s \rfloor+2$ if $p>1$ and $k> \lfloor s \rfloor+3$ if $p=1$. Then 
$$
    \|T^{\psi}_d f\|_{\Bspt} \;\lesssim_{\psi}\; \|f\|_{\Bspt},\quad f\in \Bspt(\R^d)\,.
$$
\end{thm}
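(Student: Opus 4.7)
The plan is to invoke the rectangular-means characterization of $\Bspt$ given in Theorem \ref{diff} with a fixed $m \in \N$ satisfying $m > s$ (e.g., $m = \lfloor s \rfloor + 1$), and to bound $\|T_d^\psi f\|_{\Bspt}^{(m)}$ by $\|f\|_{\Bspt}$ directly. The tensor-product structure $T_d^\psi f(x) = \prod_{i=1}^d \psi'(x_i) \cdot f(\psi(x_1),\dots,\psi(x_d))$ allows me to apply the mixed difference $\prod_{i \in e(j)} \Delta^{m}_{h_i 2^{-j_i},i}$ coordinate by coordinate, so the task essentially reduces to a one-dimensional pointwise estimate in each active direction, carried out with the remaining variables frozen.

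The core one-dimensional step is to bound $|\Delta_h^m[\psi'\cdot (g\circ\psi)](t)|$ by (a sum of) shifted differences of $g$ at points clustered around $\psi(t)$, with effective step size comparable to $\psi'(t)\cdot h$. To achieve this I would apply the discrete Leibniz identity
\[
\Delta_h^m[\psi' \cdot (g\circ\psi)](t) \,=\, \sum_{k=0}^m \binom{m}{k}\, \Delta_h^{k}\psi'(t)\, \Delta_h^{m-k}(g\circ\psi)(t + k h),
\]
combined with the smoothness bound $|\Delta_h^{k}\psi'(t)| \lesssim |h|^{k}\,\|\psi^{(k+1)}\|_\infty$ coming from $\psi \in C^{k+1}$. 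For the remaining factor $\Delta_h^{m-k}(g\circ\psi)$ I would iterate the mean-value/integral representation of a difference: since $\psi$ is a smooth diffeomorphism on $\supp(\varphi)$ with $\psi'= \varphi$, a standard chain-rule-for-differences argument gives that $|\Delta_h^{m-k}(g\circ\psi)(t)|$ is controlled by an average of $m$-th differences of $g$ at scale $\psi'(\tau)h$ for $\tau$ near $t$, plus lower-order terms that are absorbed by the $|h|^k$ factors produced by the Leibniz expansion.

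Integrating over $h\in[-1,1]$ (and then over the product set $[-1,1]^{|e(j)|}$ once all active coordinates are treated) produces a pointwise bound of the form
\[
\mathcal{R}_m^{e(j)}\bigl(T_d^\psi f, 2^{-j}, x\bigr) \,\lesssim\, \Big(\prod_{i=1}^d |\psi'(x_i)|\Big) \sum_{0\le u \le j} 2^{-(j-u)\cdot \kappa}\, \mathcal{R}_m^{e(j)}\bigl(f, 2^{-u} \psi'(x), \psi(x)\bigr)
\]
for some $\kappa > s$ depending on $k$, where $\psi'(x) = (\psi'(x_1),\dots,\psi'(x_d))$. Because the rectangular mean is monotone in the step, the extra factor $\psi'(x)\in [0,1]^d$ can be dropped. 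Taking $L_p$-norms, I would then perform the change of variables $y=(\psi(x_1),\dots,\psi(x_d))$, whose Jacobian $\prod_i \psi'(x_i)$ exactly matches the prefactor raised to the power $p$ via $(\prod |\psi'(x_i)|)^p = (\prod |\psi'(x_i)|)^{p-1}\cdot \prod |\psi'(x_i)|$; for $p\ge 1$ the remaining $(\prod|\psi'|)^{p-1} \le 1$ is harmless. Finally, summing over $j$ with weights $2^{s|j|_1\theta}$, the geometric tail $\sum_u 2^{-(j-u)\kappa\theta}$ is absorbed by a Hardy-type inequality (valid since $\kappa>s$) and I recover $\|f\|_{\Bspt}^{(m)}$.

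The main obstacle is the endpoint case $p=1$, together with the vanishing of $\psi' = \varphi$ at $\{0,1\}$. For $p>1$ one can freely dominate pointwise suprema by a Hardy–Littlewood or Peetre maximal function and invoke Theorem \ref{bagby}; for $p=1$ this is unavailable, so the ``extra'' $h$-powers produced by the Leibniz expansion must strictly outweigh the loss, which is exactly why one more derivative ($k > \lfloor s \rfloor + 3$) is required. Secondly, the change of variables $y=\psi(x)$ is only a diffeomorphism on the open unit cube and $\psi^{-1}$ is not Lipschitz near the boundary; this must be compensated by the fact that every term in the Leibniz expansion carries at least one factor of $\psi'$ (or a high-order derivative of $\psi$ that is bounded and compactly supported in $[0,1]$), so the substitution is legitimate and no boundary singularities appear. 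Careful bookkeeping of these two issues, together with the reduction to the one-dimensional identity above, constitutes the technical heart of the proof.
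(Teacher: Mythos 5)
Your proposal takes a genuinely different route from the paper's, but it has a gap that I believe is fatal in the form stated.

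The paper's proof of Theorem~\ref{changemixed-B} does not work pointwise on the rectangular means of the full function $f$. It first performs a Littlewood--Paley decomposition $f=\sum_{\ell\in\Z^d} f_{j+\ell}$ into band-limited blocks, then for each block splits the active directions according to the sign of $\ell_i$, and exploits the band-limiting crucially: for $f_{j+\ell}$ with low frequency relative to the scale (the $\ell_i<0$ directions), the B-spline convolution representation turns the $m$-th difference into an $m$-th derivative, and Bernstein's inequality --- in the guise of the Peetre maximal estimate \eqref{f31}, $|f_{j+\ell}^{(\gamma)}|\lesssim 2^{(j+\ell)m}P_{2^{j+\ell},a}f_{j+\ell}$ --- converts the derivative back into a controlled quantity. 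The loss from the change of variable is then harnessed by Corollary~\ref{cor:Linfty}, which keeps exactly one copy of $|\varphi|^{1/p}$ as the Jacobian factor for the substitution $y=\psi(x)$. The final summability over $\ell$ is geometric because $m>s>0$.

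Your approach tries to skip the decomposition and prove a direct pointwise bound
\[
\mathcal{R}_m^{e(j)}\bigl(T_d^\psi f,2^{-j},x\bigr)\;\lesssim\;\Bigl(\prod_i|\psi'(x_i)|\Bigr)\sum_{0\le u\le j}2^{-(j-u)\kappa}\,\mathcal{R}_m^{e(j)}\bigl(f,2^{-u}\psi'(x),\psi(x)\bigr),
\]
via a discrete Leibniz expansion and a ``chain rule for differences.'' This is where it breaks. Applying the mean-value/B-spline representation to $\Delta_h^{m-k}(g\circ\psi)$ produces pointwise values of \emph{lower-order derivatives} $g^{(j)}(\psi(\tilde t))$, $1\le j\le m-k$, weighted by products of derivatives of $\psi$. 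There is no pointwise inequality that controls such lower-order derivatives by rectangular (integral) means of $m$-th differences of $g$ at the same point. The classical Marchaud inequality, which converts a low-order modulus into an integral of higher-order moduli, is an $L_p$ statement, not a pointwise one, and even in $L_p$ it produces \emph{sup}-moduli, not integral means. In the paper, this conversion is only legitimate because the $f_{j+\ell}$ are band-limited, for which Bernstein/Peetre applies; for a general $f\in\Bspt$, no such pointwise estimate holds, and your proposed middle inequality does not follow. Consequently, the Hardy-type absorption at the end has nothing to absorb.

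A second, more local error: the claim that ``the rectangular mean is monotone in the step,'' used to discard the factors $\psi'(x_i)\in[0,1]$ in $\mathcal{R}_m^{e(j)}(f,2^{-u}\psi'(x),\psi(x))$, is false. From \eqref{rectm}, $\mathcal{R}_m^{e}(f,t,x)$ is a normalized average $\prod_{i\in e}t_i^{-1}\int_{|h_i'|\le t_i}|\Delta_{h'}^{m,e}(f,x)|\,dh'$, and averages over smaller intervals are not in general dominated by averages over larger ones; only the sup-modulus $\omega_m(f,t)_\infty$ enjoys monotonicity in $t$, and that is not the object the norm is built from. This matters near the boundary of $[0,1]$, where $\psi'(x_i)\to 0$, which is precisely where your estimate would need to be uniform. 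Finally, even if the pointwise transfer did hold, you still need something playing the role of Corollary~\ref{cor:Linfty} to organize the derivatives of $\varphi$ produced by the Leibniz expansion into exactly $|\varphi(x_i)|^{1/p}$ compatible with the Jacobian; this is not heuristic bookkeeping but a quantitative statement requiring $k>m+1$ (and $k>m+2$ for $p=1$), and your proposal never isolates it.

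If you want to avoid the change-of-variables-plus-Marchaud route, the Littlewood--Paley/Peetre decomposition in the paper is essentially forced: it is what turns ``derivative of a general $f$'' into ``pointwise controllable quantity.''
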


\smallskip

\begin{thm}\label{changemixed-F} Let $1<p<\infty$, $1< \theta\le\infty$, $s>0$  and $\varphi\in C^k_0(\R)$ as
above with $k> \lfloor s \rfloor+2$. Then 
$$
    \|T^{\psi}_d f\|_{\Fspt} \;\lesssim_{\psi}\; \|f\|_{\Fspt},\quad f\in \Fspt(\R^d)\,.
$$
\end{thm}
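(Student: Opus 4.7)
The plan is to exploit the characterization of $\Fspt$ by rectangular means of mixed differences given in Theorem~\ref{fdiff}: fixing $m=\lfloor s\rfloor+1>s$,
\[
\|T^\psi_d f\|_{\Fspt} \;\asymp\; \Big\|\Big(\sum_{j\in\N_0^d} 2^{s|j|_1\theta}\,\mathcal{R}^{e(j)}_m(T^\psi_d f,2^{-j},\cdot)^\theta\Big)^{1/\theta}\Big\|_p.
\]
The strategy is to establish a pointwise estimate bounding $\mathcal{R}^{e(j)}_m(T^\psi_d f,2^{-j},x)$ in terms of a rectangular mean of $f$ itself at the \emph{image} point $\psi(x)$, then to use the change of variable $y=\psi(x)$ to move into the natural setting for $f$, and finally to reassemble the $L_p(\ell_\theta)$-norm by means of the Fefferman-Stein maximal inequalities from Section~\ref{sec:prel}.

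First, for fixed $j\in\N_0^d$ I would analyse the mixed difference $\Delta^{m,e(j)}_{(h_1 2^{-j_1},\dots,h_d 2^{-j_d})}(T^\psi_d f)(x)$ with $h\in[-1,1]^d$. Since both $T^\psi_d$ and $\Delta^{m,e(j)}$ factor over the coordinates, one works one direction at a time and, in every $i\in e(j)$, applies the discrete Leibniz rule
\[
\Delta^m_{h}(\psi'\cdot g)(x_i)=\sum_{\ell=0}^{m}\binom{m}{\ell}\,\Delta^\ell_h\psi'(x_i)\cdot\Delta^{m-\ell}_h g(x_i+\ell h)
\]
to peel off the factor $\psi'(x_i)$. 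Because $\psi'=\varphi\in C^k$ with $k\geq m+2$, one has $|\Delta^\ell_h\psi'(x_i)|\lesssim |h|^\ell$ uniformly. Iterating over $i\in e(j)$ produces a leading term $\bigl(\prod_{i=1}^d \psi'(x_i)\bigr)\cdot\Delta^{m,e(j)}_{(h\cdot 2^{-j})}(f\circ\psi)(x)$ plus correction terms in which at least one of the inner differences acts on $\psi'$, thereby carrying extra powers of $2^{-j_i}$ that can be absorbed when summing against $2^{s|j|_1\theta}$.

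The heart of the argument is to convert $\Delta^{m,e(j)}_{(h\cdot 2^{-j})}(f\circ\psi)(x)$ into a mixed difference of $f$ at $\psi(x)$. For each $i\in e(j)$ the Taylor expansion
\[
\psi(x_i+\nu h_i 2^{-j_i})-\psi(x_i)=\nu h_i 2^{-j_i}\psi'(x_i)+O(2^{-2j_i}),\qquad 0\leq\nu\leq m,
\]
allows one to rewrite the composed difference as $\Delta^{m,e(j)}_{\sigma_j(x,h)}f(\psi(x))+R_j(x,h)$, where $\sigma_j(x,h)$ is the vector with $i$-th entry equal to $\psi'(x_i)h_i 2^{-j_i}$ up to lower order corrections, and $R_j$ is controlled by higher-order differences of $f$ multiplied by extra factors $2^{-j_i}$. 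Precisely here the hypothesis $k>\lfloor s\rfloor+2$ is consumed: one needs enough derivatives of $\psi$ to justify the order-$m$ Taylor expansion and to ensure every remainder gains one spare $2^{-j_i}$ in each active coordinate. Integrating in $h$ yields
\[
\mathcal{R}^{e(j)}_m(T^\psi_d f,2^{-j},x)\;\lesssim\;\Big(\prod_{i=1}^d\psi'(x_i)\Big)\,\mathcal{R}^{e(j)}_m\!\big(f,\tau_j(x),\psi(x)\big)+\text{summable remainders},
\]
with $\tau_j(x)_i\lesssim 2^{-j_i}$.

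To finish, I would substitute $y=\psi(x)$ in the outer $L_p$-integral: the factor $\prod_i\psi'(x_i)$ is exactly the Jacobian of this change of variable, so it is absorbed cleanly, and the resulting expression contains $\mathcal{R}^{e(j)}_m(f,\tau_j(\psi^{-1}(y)),y)$, which can be compared to the genuine mean $\mathcal{R}^{e(j)}_m(f,2^{-j},y)$ via directional Hardy-Littlewood maximal functions $M_i$. Applying the directional vector-valued Fefferman-Stein inequality (Theorem~\ref{bagby}) coordinate by coordinate reassembles the $L_p(\ell_\theta)$-norm and, after invoking Theorem~\ref{fdiff} backwards, yields $\|T^\psi_d f\|_{\Fspt}\lesssim\|f\|_{\Fspt}$; the restrictions $1<p<\infty$ and $1<\theta\leq\infty$ are exactly the hypotheses under which Theorems~\ref{feffstein} and~\ref{bagby} apply. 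The \emph{main obstacle} will be carrying out the Taylor-based decomposition of $\Delta^{m,e(j)}_{(h\cdot 2^{-j})}(f\circ\psi)$ simultaneously in all active coordinates $i\in e(j)$ while producing remainders that are both pointwise dominated by rectangular means of $f$ and equipped with enough factors $2^{-j_i}$ to make the sum converge against the weight $2^{s|j|_1\theta}$.
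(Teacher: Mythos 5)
There is a genuine gap, and it is the absence of the Littlewood--Paley decomposition of $f$. The paper's proof writes $f=\sum_{\ell\in\Z^d}f_{j+\ell}$ with $f_j=\F^{-1}[\eta_j\F f]$, and for each dyadic shift $\ell$ it splits the active coordinates into $e_2(j,\ell)=\{i:\ell_i<0\}$ (where the building block $f_{j+\ell}$ oscillates on a scale coarser than the difference step $2^{-j_i}$, so the $m$th difference is converted into an $m$th derivative via the B-spline representation and one gains $2^{-j_im}$) and $e_1(j,\ell)=\{i:\ell_i\ge0\}$ (where the difference is expanded trivially and controlled by the Hardy--Littlewood maximal operator). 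The derivatives that land on $f_{j+\ell}$ are then handled by Bernstein-type estimates together with the Peetre maximal function (Lemma~\ref{maxunglem} and Theorem~\ref{peetremax}), and the convergence of the $\ell$-sum is precisely what consumes $s>0$. Your plan instead tries to estimate $\mathcal{R}^{e(j)}_m(T^\psi_d f,2^{-j},x)$ pointwise by $\mathcal{R}^{e(j)}_m(f,\tau_j(x),\psi(x))$ plus ``summable remainders'' obtained by Taylor-expanding $\psi$. The problem is that $\Delta^m_{h2^{-j_i}}(f\circ\psi)(x_i)$ is a difference of $f$ over a nonarithmetic set of points $\psi(x_i+\nu h_i2^{-j_i})$, and rewriting it as $\Delta^m_{h_i2^{-j_i}\psi'(x_i)}f(\psi(x_i))$ produces a remainder that involves $f'$ (and higher derivatives of $f$). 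For a general $f\in\Fspt$ those derivatives are not controlled in any usable way; there is nothing in $\|f\|_{\Fspt}$ that dominates $f'$ pointwise. The remainder only becomes tractable after the frequency decomposition, where the band-limitation of $f_{j+\ell}$ turns Bernstein's inequality into the missing pointwise bound. You explicitly flag this as ``the main obstacle,'' but the resolution is not a more careful Taylor argument --- it is the decomposition $f=\sum_\ell f_{j+\ell}$.

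A second, independent gap is that you never confront the fact that $\varphi=\psi'$ vanishes at $0$ and $1$. Your leading term $\bigl(\prod_i\psi'(x_i)\bigr)\mathcal{R}^{e(j)}_m(f,\tau_j,\psi(x))$ changes variable cleanly precisely because the prefactor $\prod_i\psi'(x_i)$ matches the Jacobian. But your correction terms from the discrete Leibniz rule carry factors $\Delta^\ell_h\psi'(x_i)\lesssim |h|^\ell$ with \emph{no} compensating $\prod_i\psi'(x_i)$, so after substituting $y=\psi(x)$ in the $L_p$-integral they pick up a singular factor $\prod_i|\psi'(\psi^{-1}(y_i))|^{-1/p}$, which is unbounded. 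Controlling this is exactly the role of Lemma~\ref{lem:Linfty} and Corollary~\ref{cor:Linfty} in the paper: one needs $|\varphi^{(r)}\varphi^{(\alpha)}|/|\varphi|^{1/p}\in L_\infty$, which forces the derivatives of $\varphi$ to vanish fast enough at the endpoints and is the true source of the requirement $k>\lfloor s\rfloor+2$. The bound $|\Delta^\ell_h\psi'|\lesssim|h|^\ell$ that you invoke is correct but far too weak here, because it says nothing about how $\psi^{(\ell+1)}$ compares to $\psi'$ near $\partial[0,1]$. Without both the Littlewood--Paley decomposition and this boundary control, the argument cannot be closed.
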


\medskip

\begin{rem}\label{s>1mixed} 
\begin{enumerate}[(i)]
	\item Observe that the smoothness $k$ of kernel $\varphi$ in the $\mathbf F$-case does not have to grow
to infinity when $p$ tends
				to $1$. This result has to be compared with the mentioned result of Templyakov, see \eqref{expl} and 
				\cite[page 237]{Te03}.
	\item With the technique used in this paper we were not able to give a result in the case $p < 1$ for both,
				$\mathbf{B}$ and $\mathbf{F}$-spaces, not even in the univariate situation, see Lemma \ref{univ-f} below. Note,
				that also the case $p=1$ is open in the $\mathbf{F}$-case. However, we strongly conjecture that we can prove
				boundedness of the change of variable mapping if $1/2<p \leq 1$ in both cases. 
\end{enumerate}
\end{rem}

\noindent For the convenience of the reader we will first comment on a corresponding 
univariate result. 
\begin{lem}\label{univ-f} Let $1<p<\infty$, $1< \theta\le\infty$, $s>0$  and $\varphi\in C^k_0(\R)$ as above with $k>
\lfloor s \rfloor+2$. Then 
$$
    \|T^{\psi}_1 f\|_{F^s_{p,\theta}} \;\lesssim_{\psi}\; \|f\|_{F^s_{p,\theta}},\quad f\in F^s_{p,\theta}(\R)\,.
$$
\end{lem}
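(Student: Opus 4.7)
\medskip
\noindent\textbf{Proof plan.} The strategy is to bypass the Fourier-analytic definition of $F^s_{p,\theta}$ entirely and argue through the equivalent rectangular-means characterization of Theorem~\ref{fdiff}. Pick an integer $m$ with $s<m\le k-1$; the natural choice is $m=\lfloor s\rfloor+1$ or $m=\lfloor s\rfloor+2$, and we reserve one extra order of smoothness for $\varphi$ so that $\Delta^\ell_h\varphi$ exists pointwise up to $\ell=m$. Since $\supp(\varphi)\subset[0,1]$, the function $T_1^\psi f$ is supported in $[0,1]$, and for fixed $j$ the difference $\Delta^m_{2^{-j}h}(T_1^\psi f)(x)$ vanishes outside a neighborhood of $[0,1]$, so the outer $L_p$-norm in $\|T_1^\psi f\|_{F^s_{p,\theta}}^{(m)}$ is really taken on a bounded set.

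The computation itself will proceed in three ingredients. First, a discrete Leibniz expansion,
\[
\Delta^m_{2^{-j}h}\!\bigl(\varphi\cdot(f\!\circ\!\psi)\bigr)(x)
\,=\,\sum_{\ell=0}^{m}\binom{m}{\ell}\bigl(\Delta^\ell_{2^{-j}h}\varphi\bigr)(x)\,\bigl(\Delta^{m-\ell}_{2^{-j}h}(f\!\circ\!\psi)\bigr)(x+\ell\,2^{-j}h),
\]
together with the trivial bound $|\Delta^\ell_{2^{-j}h}\varphi(x)|\lesssim \min\{1,(2^{-j}|h|)^\ell\}\le 2^{-j\ell}$ coming from $\varphi\in C^k_0$, reduces matters to estimating $\Delta^{N}_{2^{-j}h}(f\!\circ\!\psi)$ for $N=m-\ell$. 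Second, a chain-rule estimate for differences: because $\psi'=\varphi\in C^{k-1}$ with $k-1\ge m$, on each subinterval of length $m\cdot 2^{-j}|h|$ the map $\psi$ is $C^1$-close to the affine map $t\mapsto \psi(x)+\varphi(x)(t-x)$, which via Taylor expansion and an iteration on the order $N$ gives a pointwise bound of the form
\[
\bigl|\Delta^N_{2^{-j}h}(f\!\circ\!\psi)(x+\ell 2^{-j}h)\bigr|
\,\lesssim\, \|\varphi\|_{C^k}^{N}\, P_{2^{j},a}\!\bigl[F_{j,N}\bigr]\bigl(\psi(x)\bigr),
\]
where $F_{j,N}$ is a suitably localized piece of $f$ at dyadic frequency $\asymp 2^{j}$ and $P_{b,a}$ is the Peetre maximal function of Lemma~\ref{maxunglem}. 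Third, the change of variables $y=\psi(x)$ on $[0,1]$, whose Jacobian is $dy=\varphi(x)\,dx$, transfers the $L_p$-norm in $x$ to an $L_p$-norm in $y$ up to a bounded weight $\varphi(x)^{p-1}\le\|\varphi\|_\infty^{p-1}$. Plugging everything into the $\ell^\theta(L_p)$-expression and pulling the extra powers $2^{-j\ell}$ into the summation over $j$ (they only \emph{improve} the decay, since $m>s$), the vector-valued Peetre inequality Theorem~\ref{peetremax}, whose hypotheses $1<p<\infty$ and $\theta>1$ match the lemma exactly, concludes the estimate $\|T_1^\psi f\|_{F^s_{p,\theta}}^{(m)}\lesssim \|f\|_{F^s_{p,\theta}}^{(m)}$.

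\medskip
\noindent\textbf{Main obstacle.} The delicate step is the chain-rule bound for $|\Delta^{N}_{2^{-j}h}(f\!\circ\!\psi)|$: $f$ is only a tempered distribution with some regularity, so one cannot just write $\Delta^N_h(f\!\circ\!\psi)$ as an iterated integral of $(f\!\circ\!\psi)^{(N)}$ and use Fa\`a~di~Bruno. Instead one has to organize things so that $\psi$'s smoothness produces the correct factors $\varphi^N$ (which fit into the eventual change of variables) while the remainder is absorbed into a Peetre maximal function of a \emph{frequency-localized} piece of $f$, since only such pieces are amenable to Theorem~\ref{peetremax}. This is also where the restriction $p,\theta>1$ becomes essential (Peetre's maximal inequality), why the case $p=1$ is left open in Remark~\ref{s>1mixed}(ii), and why the smoothness budget must exceed $s$ by a small additive constant, explaining the condition $k>\lfloor s\rfloor+2$.
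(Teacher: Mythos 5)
Your high-level plan (rectangular-means characterization, Leibniz, change of variables, Peetre maximal inequality) is the right one and matches the paper's strategy, but there are two genuine gaps that would stop the proof as written.

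First, you never introduce the Littlewood--Paley decomposition $f=\sum_{\ell\in\Z}f_{j+\ell}$ inside the rectangular-means norm. The Peetre maximal inequality (Theorem~\ref{peetremax}) only applies to frequency-localized pieces, yet you feed it a vaguely defined object $F_{j,N}$; there is no canonical ``piece of $f$ at frequency $\asymp 2^j$'' hiding inside $\Delta^{N}_{2^{-j}h}(f\circ\psi)$. The paper's proof lives on the double sum over $j$ and $\ell$: for $\ell<0$ (low-frequency pieces of $f$ relative to the difference scale $2^{-j}$) one converts the difference to an $m$-th derivative bound and gains $2^{\ell(m-s)}$, while for $\ell\geq0$ one simply expands the difference and gains $2^{-\ell s}$. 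Without this decomposition the sum over scales has no mechanism for convergence, and the Peetre step has nothing to act on.

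Second, and more seriously, your change-of-variables step underestimates the Jacobian problem. The pushforward is $\int_0^1|g(\psi(x))|^p\,dx=\int_0^1|g(y)|^p\,\varphi(\psi^{-1}(y))^{-1}dy$, so the Jacobian is $1/\varphi$, which blows up at $\partial[0,1]$. Your estimate ``up to a bounded weight $\varphi(x)^{p-1}\le\|\varphi\|_\infty^{p-1}$'' only applies to the single Leibniz term that carries a spare factor $\varphi(x)$, namely $\varphi(x)\cdot\Delta^{m}_{h}(f\circ\psi)(x)$. For the other terms the outer factor is $\Delta^{\ell}_{h}\varphi(x)\approx h^{\ell}\varphi^{(\ell)}(\xi)$ together with products of derivatives of $\varphi$ coming out of the chain rule, and these do \emph{not} vanish at $\partial[0,1]$ fast enough to tame $1/\varphi$. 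Your assertion that ``$\psi$'s smoothness produces the correct factors $\varphi^{N}$'' is not correct: one gets $\varphi^{(\beta_1)}\cdots\varphi^{(\beta_r)}$, not powers of $\varphi$ itself. The analytic heart of the paper's proof is exactly the compensating estimate (Corollary~\ref{cor:Linfty}) that a product $|\varphi^{(r)}\varphi^{(\alpha)}|$ with $r+\alpha\le m$ is dominated by $|\varphi|^{1/p}$ when $k>m+1$; this factor $|\varphi|^{1/p}$ is what makes the change of variables legitimate. Nothing in your proposal supplies this, and without it the $L_p$-norms on the right-hand side diverge. (Relatedly, the top-order Leibniz term $\Delta^{m}_{h}\varphi\cdot(f\circ\psi)$ cannot be treated by change of variables at all; the paper instead bounds $|f_{j+\ell}(\psi(t))|\lesssim 2^{(j+\ell)m}P_{2^{j+\ell},a}f_{j+\ell}(t)$ directly, using that $t,\psi(t)\in[0,1]$ and $a\le m$.)
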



\bigskip

\noindent
Before we come to the proof of the univariate boundedness results we need some preparation. 
It will turn out that difference characterizations of the function spaces under consideration is a suitable approach to
analyze the respective operators.

\subsection{Proof of Lemma \ref{univ-f}}


Let us first prove a technical lemma on the boundedness of quotients of derivatives of $\varphi$. 
Such terms naturally appear if one bounds the $L_p$-norm of $T^\psi_1 f$. For the particular choice $\phi=\psi_k'$, with
$\psi_k$ from \eqref{psin}, the lemma below was proved in in~\cite[p.~238]{Te93}.

\begin{lem}\label{lem:Linfty} Let $1<p\leq \infty$ and $m\in\N_0$. Further let $\phi \in C_0^{k}(\R)$ with 
$\supp(\varphi) \subset [0,1]$,  $\varphi>0$ on $(0,1)$ and $k>\frac{mp}{p-1}+1$. Let us further assume that
the $k$th derivative $\varphi^{(k)}$ has only finitely many zeros in $[0,1]$. Then we have
$$
    \frac{|\varphi^{(n)}(t)|}{|\varphi(t)|^{1/p}} \in L_{\infty}([0,1])\,
$$
for all $0\leq n\leq m$. 
\end{lem}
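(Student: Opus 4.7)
My approach would be as follows. The ratio $|\varphi^{(n)}(t)|/|\varphi(t)|^{1/p}$ is continuous on the open set where $\varphi > 0$, which by assumption contains $(0,1)$; hence it is automatically bounded on every compact subset of $(0,1)$. The issue is therefore only near the zeros of $\varphi$ in $[0,1]$, i.e., the endpoints $t=0$ and $t=1$. The case $p = \infty$ is immediate since then $|\varphi|^{1/p}$ is identically $1$ and $\varphi^{(n)}$ is continuous on $\R$, so I restrict attention to $1 < p < \infty$. The analysis at $t=1$ is completely analogous to that at $t=0$ (replace $\varphi$ by $t \mapsto \varphi(1-t)$), so I only treat the endpoint $0$.

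Next I set up Taylor's theorem. Since $\varphi \in C^k_0(\R)$ has support in $[0,1]$, continuity forces $\varphi^{(j)}(0) = 0$ for $j = 0, 1, \ldots, k$. The hypothesis $k > mp/(p-1) + 1$ implies $k \geq m + 2$, so in particular $n \leq m \leq k-1$, and Taylor's theorem with integral remainder gives
\begin{equation*}
\varphi^{(n)}(t) \,=\, \int_0^t \frac{(t-s)^{k-n-1}}{(k-n-1)!}\, \varphi^{(k)}(s)\, ds,
\qquad
\varphi(t) \,=\, \int_0^t \frac{(t-s)^{k-1}}{(k-1)!}\, \varphi^{(k)}(s)\, ds,
\end{equation*}
valid for $t \in [0, 1]$. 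By the hypothesis that $\varphi^{(k)}$ has only finitely many zeros on $[0, 1]$, I can pick $t_0 > 0$ such that $\varphi^{(k)}$ has constant sign on $(0, t_0)$; without loss of generality $\varphi^{(k)} > 0$ there, so both integrands above are nonnegative on this interval and the absolute values can be dropped.

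The heart of the argument is then a H\"older split in the first identity. Writing the integrand as the product
\begin{equation*}
(t-s)^{k-n-1}\, \varphi^{(k)}(s)
\,=\, \bigl[(t-s)^{(k-1)/p}\, \varphi^{(k)}(s)^{1/p}\bigr] \cdot \bigl[(t-s)^{k-n-1-(k-1)/p}\, \varphi^{(k)}(s)^{1/p'}\bigr]
\end{equation*}
with $p' = p/(p-1)$, and applying H\"older's inequality with exponents $p$ and $p'$, the first factor reconstructs $((k-1)!\,\varphi(t))^{1/p}$, while the second leaves a remainder of the form $\bigl(\int_0^t (t-s)^\beta\, \varphi^{(k)}(s)\, ds\bigr)^{1/p'}$ with $\beta = (k - n - 1 - (k-1)/p)\, p/(p-1)$. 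A short calculation gives
\begin{equation*}
\beta + 1 \,=\, k - \frac{np}{p-1} \,\geq\, k - \frac{mp}{p-1} \,>\, 1,
\end{equation*}
which is strictly positive thanks to the hypothesis on $k$; hence the remainder integral is finite and bounded by $\|\varphi^{(k)}\|_\infty \, t^{\beta+1}/(\beta+1)$, uniformly for $t \in (0, t_0)$. Combining the two estimates yields
\begin{equation*}
|\varphi^{(n)}(t)| \,\leq\, C \, \varphi(t)^{1/p} \cdot t^{(\beta+1)/p'}
\quad \text{for } t \in (0, t_0),
\end{equation*}
so $|\varphi^{(n)}(t)|/\varphi(t)^{1/p} \leq C\, t_0^{(\beta+1)/p'}$ on $(0, t_0)$, completing the estimate near $0$.

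The main technical obstacle is the calibration of the H\"older split: the exponents must be arranged so that the first factor rebuilds exactly $\varphi(t)^{1/p}$ via the integral representation of $\varphi$, while the leftover power of $(t-s)$ in the second factor remains strictly greater than $-1$. The quantitative condition $k > mp/(p-1) + 1$ is precisely what guarantees this integrability, uniformly for all $n \leq m$, and in fact gives the extra margin $\beta + 1 > 1$ so that the resulting power of $t$ is not only bounded but even vanishes as $t \to 0^+$.
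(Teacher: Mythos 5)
Your proof is correct and follows the same overall strategy as the paper: reduce to the boundary, invoke Taylor's theorem with integral remainder (using that all derivatives vanish at the endpoint), and apply H\"older to compare $\varphi^{(n)}$ with $\varphi^{1/p}$. The H\"older calibration differs mildly: the paper pairs the Taylor integrand of $\varphi^{(m)}$ with the constant function $1$ to produce a factor $t^{1/p'}$, then re-factors the resulting $p$-th moment so that one piece is the Taylor integral for $\varphi(t)$ (cancelling the denominator) and the other is bounded by a pointwise supremum of $\bigl(\varphi^{(k)}(\xi)\bigr)^{p-1}|t-\xi|^{p(\ell-m)-\ell}$, which requires the exponent $p(\ell-m)-\ell$ to be nonnegative. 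You instead design the H\"older split up front so that the $L^p$ factor literally reproduces $\bigl((k-1)!\,\varphi(t)\bigr)^{1/p}$, leaving a single new integral $\int_0^t (t-s)^\beta\varphi^{(k)}(s)\,ds$ to bound, which needs only $\beta>-1$. Both conditions are implied by the hypothesis $k>\tfrac{mp}{p-1}+1$; yours is slightly less demanding, and it avoids the separate pointwise sup estimate, so the argument is marginally tighter, but the two proofs are the same idea.
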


\bproof It is enough to prove the result for $n=m$. If $p=\infty$ the result is obvious. Hence, assume $1<p<\infty$. We
put $\ell=k-1$. Using Taylor's theorem and the fact that $\phi^{(i)}(0)=0$ for all $i=0,...,k$, 
we obtain 
$$\phi(t)=\frac{1}{\ell!}\int_0^t \phi^{(\ell+1)}(\xi)\, (t-\xi)^{\ell} d \xi$$ as well as 
$$\phi^{(m)}(t)=\frac{1}{(\ell-m)!}\int_0^t \phi^{(\ell+1)}(\xi)\, (t-\xi)^{\ell-m} d \xi$$
for all $t\in[0,1]$. Since $\varphi^{(\ell+1)}$ has only finitely many zeros in $[0,1]$ there exists an $\eps>0$ such
that $\phi^{(\ell+1)}(t)>0$ for $t\in(0,\eps)$. 
This shows with $p'=p/(p-1)$ that
\beqq
\frac{|\phi^{(m)}(t)|}{|\phi(t)|^{1/p}} 
\,&\le &\, \frac{\ell!}{(\ell-m)!}\, \frac{t^{1/p'} 
	\Big(\int_0^t \big(\phi^{(\ell+1)}(\xi)\big)^{p}\, (t-\xi)^{p(\ell-m)} d \xi\Big)^{1/p}}{
	 \Big(\int_0^t \phi^{(\ell+1)}(\xi)\, (t-\xi)^{\ell} d \xi\Big) ^{1/p}}\\
&\lesssim_k &\, \sup_{\xi\in(0,\eps)} \Big( \abss{\phi^{(\ell+1)}(\xi)}^{p-1}\, |t-\xi|^{p(\ell-m)-\ell}
\Big)^{1/p}\frac{  
	\Big(\int_0^t  \phi^{(\ell+1)}(\xi) \, (t-\xi)^{\ell} d \xi\Big)^{1/p}}{
	\Big(\int_0^t \phi^{(\ell+1)}(\xi)\, (t-\xi)^{\ell} d \xi\Big)^{1/p}} \\
&\lesssim_k & 1
\eeqq
for $t\in(0,\eps)$. In the last inequality we used $p>1$ and $\ell>\frac{pm}{p-1}$. The same arguments work also for
$(1-\eps_0,1)$ with some $\eps_0>0$. The quotient is uniformly bounded in $[\eps,1-\eps_0]$
since $\phi(t)\ge c>0$ for $t\in[\eps,1-\eps_0]$.\\
\eproof

It is easily seen that Lemma \ref{lem:Linfty} is not true for $p=1$. We immediately obtain the following corollary. 
Note the relaxed conditions on $p$ and $k$.

\begin{cor}\label{cor:Linfty}
Let $1\le p\leq \infty$ and $m\in\N_0$. Further let $\phi \in C_0^{k}(\R)$ with 
$\supp(\varphi) \subset [0,1]$,  $\varphi>0$ on $(0,1)$ and $k>m+1$. Let us further assume that
the $k$th derivative $\varphi^{(k)}$ has only finitely many zeros in $[0,1]$. Then we have
$$
    \frac{|\varphi^{(r)}(t)\varphi^{(\alpha)}(t)|}{|\varphi(t)|^{1/p}} \in L_{\infty}([0,1])\,
$$
for all $0\leq r,\alpha\leq m$ with $r+\alpha\le m$. 
\end{cor}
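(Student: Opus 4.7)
The plan is to deduce the corollary from Lemma~\ref{lem:Linfty} by a H\"older-type factorization. I would write
\[
\frac{|\varphi^{(r)}(t)\varphi^{(\alpha)}(t)|}{|\varphi(t)|^{1/p}} \;=\; \frac{|\varphi^{(r)}(t)|}{|\varphi(t)|^{1/q_1}} \cdot \frac{|\varphi^{(\alpha)}(t)|}{|\varphi(t)|^{1/q_2}}
\]
with exponents $q_1,q_2>1$ chosen so that $\tfrac{1}{q_1}+\tfrac{1}{q_2}=\tfrac{1}{p}$, and apply Lemma~\ref{lem:Linfty} separately to each factor, with $q_i$ playing the role of the lemma's ``$p$'' and with $r$, $\alpha$ in place of the lemma's ``$m$''.

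First I would dispose of the degenerate cases $r=0$ or $\alpha=0$: if, say, $r=0$, then the ratio equals $|\varphi|^{1-1/p}|\varphi^{(\alpha)}|$, and both factors are bounded on $[0,1]$ since $\varphi\in C_0^k(\R)$ and $\alpha\le m<k$; here $1-1/p\ge 0$ is all that is needed and Lemma~\ref{lem:Linfty} is not required. So I may assume $r,\alpha\ge 1$; in particular $m\ge 2$ and $r,\alpha\le m-1$.

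In the main case, invoking Lemma~\ref{lem:Linfty} on the first factor with derivative order $r$ and exponent $q_1$ demands $q_1>1$ and $k>\tfrac{rq_1}{q_1-1}+1$, which is equivalent to $q_1>\tfrac{k-1}{k-1-r}$. Symmetrically I need $q_2>\tfrac{k-1}{k-1-\alpha}$. These two strict lower bounds on $q_1,q_2$, together with the equality $\tfrac{1}{q_1}+\tfrac{1}{q_2}=\tfrac{1}{p}$, are simultaneously achievable precisely when
\[
\tfrac{1}{p} \;<\; \tfrac{k-1-r}{k-1} + \tfrac{k-1-\alpha}{k-1} \;=\; 2 - \tfrac{r+\alpha}{k-1}.
\]
Since the hypotheses $r+\alpha\le m$ and $k>m+1$ give $\tfrac{r+\alpha}{k-1}<1$, the right-hand side exceeds $1$ and hence $1/p$ for every $p\ge 1$. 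Thus admissible $q_1,q_2$ exist, and applying Lemma~\ref{lem:Linfty} to each factor concludes the proof.

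The only delicate point I anticipate is the endpoint $p=1$: here the H\"older budget $\tfrac{1}{q_1}+\tfrac{1}{q_2}=1$ is tight, so the two strict constraints $q_1>\tfrac{k-1}{k-1-r}$ and $q_2>\tfrac{k-1}{k-1-\alpha}$ leave no room to spare. What rescues the argument is the strict inequality $r+\alpha<k-1$, which follows from $r+\alpha\le m$ and $k\ge m+2$ (integers with $k>m+1$), and which provides exactly the slack needed to insert an admissible $q_1$ in the interval $\bigl(\tfrac{k-1}{k-1-r},\,\tfrac{k-1}{\alpha}\bigr)$ and then set $q_2=q_1/(q_1-1)$.
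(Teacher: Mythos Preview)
Your proof is correct and follows essentially the same H\"older-type factorization as the paper, applying Lemma~\ref{lem:Linfty} to each factor separately. The only cosmetic difference is that the paper first reduces to the hardest case $p=1$ (using that $\varphi$ is bounded, so $|\varphi|^{-1/p}\lesssim|\varphi|^{-1}$) and then makes the explicit choice $q_1=(r+\alpha)/\alpha$, $q_2=(r+\alpha)/r$, whereas you work with general $p$ and argue the existence of admissible $q_1,q_2$ via the budget inequality $1/p<2-(r+\alpha)/(k-1)$.
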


\begin{proof}
Obviously, it is enough to prove the statement for $p=1$, since
\[
\frac{|\varphi^{(r)}(t)\varphi^{(\alpha)}(t)|}{|\varphi(t)|^{1/p}}
\;\lesssim\; \frac{|\varphi^{(r)}(t)\varphi^{(\alpha)}(t)|}{|\varphi(t)|}
\]
due to the boundedness of $\varphi$. 
We write
\[
\frac{|\varphi^{(r)}(t)\varphi^{(\alpha)}(t)|}{|\varphi(t)|}
\;=\; \frac{|\varphi^{(r)}(t)|}{|\varphi(t)|^{1/p_1}} \cdot \frac{|\varphi^{(\alpha)}(t)|}{|\varphi(t)|^{1/p_2}}
\]
for some $p_1$, $p_2$ with $1/p_1+1/p_2=1$. 
Using Lemma~\ref{lem:Linfty} we know that we need 
$k>\frac{r p_1}{p_1-1} + 1$ for the boundedness of the first factor and 
$k>\frac{\alpha p_2}{p_2-1} + 1$ for the boundedness of the second.

First note that the statement of the corollary is trivial for $r=\alpha=0$.
If only one of them is zero, say $r=0$, then we set $p_1=1$ and $p_2=\infty$; 
again the statement is obvious.
Hence, we assume now that $r\neq0$ and $\alpha\neq0$ and we 
choose $p_1=(r+\alpha)/\alpha$ and $p_2=(r+\alpha)/r$. 
 
This leads to the restriction $k>r+\alpha+1$ in both cases. 
Since we want a result for all $r+\alpha\le m$ we impose the restriction $k>m+1$. 
This finishes the proof.\\
\end{proof}

Now we are in position to prove Lemma~\ref{univ-f}.\\
\\
\noindent
{\bf Proof of Lemma \ref{univ-f}.} {\it Step 1.} Let $\eta_j$ denote a smooth dyadic compactly supported decomposition
of unity (for instance the one given at the beginning of Subsection~\ref{subsec:spaces_R}). 
In particular, $1 = \sum_{j\in \N_0} \eta_j(t)$ for all $t\in\R$\, and $\eta_j(t) = 0$ if $|t|>2^{j+1}$. We decompose
$f$ as follows 
\be\label{decom}
    f(t) \,=\, \sum\limits_{j\in \Z} f_j(t) \,:=\, \sum\limits_{j\in \Z}\F^{-1}[\eta_j \F f](t)\,,
\ee
where we put $f_j \equiv 0$ in case $j<0$\,. 
Note, that with this definition, we can clearly write also $f=\sum_{\ell\in\Z} f_{j+\ell}$ 
for every fixed $j\in\N_0$. 
Using the characterization by rectangular means of differences of
$F_{p,\theta}^s$, see Theorem \ref{fdiff}, and the decomposition \eqref{decom} we have 
\begin{equation}
\begin{split}\label{f-com}
    \|T^{\psi}_1f\|^{(m)}_{F^s_{p,\theta}}
    \;&\leq\;   \sum\limits_{\ell\in \Z} \Big\|\Big(\sum\limits_{j\in
\N_0}2^{js\theta}\Big[2^j\int_{-2^{-j}}^{2^{-j}}|\Delta_h^m(T^{\psi}_1f_{j+\ell},\cdot)|\,dh\Big]^{\theta}\Big)^{
1/\theta}\Big\|_p\\
     &=\; \sum\limits_{\ell<0} \Big\|\Big(\sum\limits_{j\in
\N_0}2^{js\theta}\Big[2^j\int_{-2^{-j}}^{2^{-j}}|\Delta_h^{m}(T^{\psi}_1f_{j+\ell},\cdot)|\,dh\Big]^{\theta}\Big)^{
1/\theta}\Big\|_p \\
    &\qquad +\; \sum\limits_{\ell\geq 0} \Big\|\Big(\sum\limits_{j\in
\N_0}2^{js\theta}\Big[2^j\int_{-2^{-j}}^{2^{-j}}|\Delta_h^{m}(T^{\psi}_1f_{j+\ell},\cdot)|\,dh\Big]^{\theta}\Big)^{
1/\theta}\Big\|_p\,.
\end{split}
\end{equation}
Here we choose $m=\lfloor s \rfloor+1$.\\
{\it Step 2.} 
 First we recall the convolution representation of the $m$th order difference. Let $M_m(\cdot)$ is univariate B-spline
of degree $m$ which has knots at the points $\{0,1,...,m\}$, i.e.,
\beqq
M_m = \chi_{[0,1]}*\cdots *\chi_{[0,1]},\quad\text{ $m$ times},
\eeqq
where $\chi_{[0,1]}$ denotes the characteristic function of $[0,1]$. 
Note, that $M_m(\cdot)$ is bounded and $\supp(M_m)\subset [0,m]$. In case $h>0$ we have
\beqq
\Delta_h^{m}(T_1^{\psi}f_{j+\ell},t)=h^{m-1}\int_{0}^{hm}
\big(T_1^{\psi}f_{j+\ell}\big)^{(m)}(t+\xi)M_m(h^{-1}\xi)d\xi\,,
\eeqq
see \cite[page 45]{Devo93}. If $h<0$ we use B-spline with knots $\{-m,...,0\}$ and get the similar formula. Consequently, we obtain for $|h|<2^{-j}$
\be 
|\Delta_h^{m}(T_1^{\psi}f_{j+\ell},t)|\ \lesssim \ |h|^{m} M\big[(T_1^{\psi}f_{j+\ell}\big)^{(m)}\big](t)\ \leq\ 2^{-jm}M\big[(T_1^{\psi}f_{j+\ell}\big)^{(m)}\big](t)\,,\label{bm}
\ee
since $M_m(h^{-1}\cdot)$ is bounded. This leads to
\beqq
    &&\sum\limits_{\ell < 0} \Big\|\Big(\sum\limits_{j\in
\N_0}2^{js\theta}\Big[2^j\int_{-2^{-j}}^{2^{-j}}|\Delta_h^{m}(T^{\psi}_1f_{j+\ell},\cdot)|\,dh\Big]^{\theta}\Big)^{
1/\theta}\Big\|_p\\
    &&\qquad\lesssim~~ \sum\limits_{\ell < 0} \Big\|\Big(\sum\limits_{j\in
    \N_0}2^{(s-m)j\theta} \big(M\big[(T_1^{\psi}f_{j+\ell}\big)^{(m)}\big](\cdot)\big)^{\theta}\Big)^{
    1/\theta}\Big\|_p\\
    &&\qquad\lesssim ~~\sum\limits_{\ell < 0} \Big\|\Big(\sum\limits_{j\in
\N_0}2^{(s-m)j\theta}\big|\big(T_1^{\psi}f_{j+\ell}\big)^{(m)}(\cdot)\big|^{\theta}\Big)^{1/\theta}\Big\|_p\,.
\eeqq
Where in the second inequality we have applied Theorem \ref{feffstein}. Now we continue with
\beqq
\big(T_1^{\psi}f_{j+\ell}\big)^{(m)}(t)=\big[  \varphi f_{j+\ell}(\psi)\big]^{(m)}(t) = \sum_{\alpha=0}^m
C_{\alpha}\varphi^{(\alpha)}(t) [f_{j+\ell}(\psi)]^{(m-\alpha)}(t)\,,
\eeqq
for some $C_{\alpha}$. We distinguish two cases. First let us deal with $\alpha=m$. We choose the number $a$ such that
$\max\{1/p,1/\theta\}<a\leq m$. This is possible since $\max\{1/p,1/\theta\}<1$ and $m\geq 1$. Then, 
if $t\in [0,1]$ we have 
\begin{equation}\begin{split} \label{ct4}
\big|\varphi^{(m)}(t)f_{j+\ell}(\psi(t))\big|\ &\lesssim\ \sup\limits_{\xi\in [0,1]}|f_{j+\ell}(\xi)| 
\;=\; \sup\limits_{\xi\in [0,1]} \frac{|f_{j+\ell}(\xi)|}{(1+2^{j+\ell}|t-\xi|)^a} (1+2^{j+\ell}|t-\xi|)^a\\
\;&\lesssim\; 2^{(\ell+j)a} \sup\limits_{\xi\in [0,1]} \frac{|f_{j+\ell}(\xi)|}{(1+2^{j+\ell}|t-\xi|)^a}\ \\
&\lesssim\ 2^{(\ell+j)m} \sup\limits_{\xi\in \R} \frac{|f_{j+\ell}(\xi)|}{(1+2^{j+\ell}|t-\xi|)^a} \\
&=2^{(\ell+j)m}P_{j+\ell,a}(f_{j+\ell},t)\,. 
\end{split}
\end{equation}
If $t\not \in [0,1]$ the inequality \eqref{ct4} is obviously satisfied since $\supp(\varphi) \subset [0,1]$. We consider
the summand
\beq
    \Big\|\Big(\sum\limits_{j\in
\N_0}2^{(s-m)j\theta}\big|\varphi^{(m)}(\cdot)f_{j+\ell}(\psi(\cdot))\big|^{\theta}\Big)^{1/\theta}\Big\|_p
     &\lesssim &2^{(m-s)\ell}\Big\|\Big(\sum\limits_{j\in \N_0}2^{(\ell+j)s\theta}P_{j+\ell,a}(f_{j+\ell},\cdot)^
{\theta}\Big)^{1/\theta}\Big\|_p\nonumber\\
      &\lesssim& 2^{(m-s)\ell}\|f\|_{F^s_{p,\theta}}\,.\label{firstsum}
\eeq
In the last step we used Theorem \ref{peetremax}.
In case $\alpha<m$ we have
\beqq
\varphi^{(\alpha)}(t) [f_{j+\ell}(\psi)]^{(m-\alpha)}(t)
&=&\sum_{1\leq \gamma\leq m-\alpha}f_{j+\ell}^{(\gamma)}(\xi)|_{\xi=\psi(t)}\cdot\varphi^{(\alpha)} (t)\cdot
Q_{\gamma}(t)\,,
\eeqq
where $Q_{\gamma}(t)$ is the sum of the forms $\big(\varphi^{(\beta_1)}(t)\big)^{n_1}\big(\varphi^{(\beta_2)}(t)\big)^{n_2}$ 
with some $\beta_i,n_i\in \N_0$, $i=1,\,2$, satisfied $\beta_1n_1+\beta_2n_2=\gamma$. 
Note that the highest order derivative of $\varphi(t)$ in $Q_{\gamma}(t)$ is $m-\alpha$. 
Since  $\varphi\in C_0^k(\R)$, $k>\lfloor s \rfloor+2=m+1$, Corollary~\ref{cor:Linfty} yields that
\be\label{ct0}
\frac{|\varphi^{(r)}(t)\varphi^{(\alpha)}(t)|}{|\varphi(t)|^{1/p}} < C,
\ee
for all $r, \alpha$ with $r+\alpha\le m$, which leads to $|\varphi^{(\alpha)} (t) Q_{\gamma}(t)|\lesssim |\varphi(t)|^{1/p}$. 
Consequently,
\beqq
\big|\varphi^{(\alpha)}(t) [f_{j+\ell}(\psi)]^{(m-\alpha)}(t)\big|
&\lesssim & \sum_{1\leq \gamma\leq m-\alpha}\big|f_{j+\ell}^{(\gamma)}(\xi)\big|_{\xi=\psi(t)}\cdot|\varphi 
(t)|^{1/p}\,.
\eeqq
We consider the term
\beq\label{f24}
F_{\alpha,\ell}&:=&\Big\|\Big(\sum\limits_{j\in \N_0}2^{(s-m)j\theta}\big|\varphi^{(\alpha)}
[f_{j+\ell}(\psi)]^{(m-\alpha)}\big|^{\theta}\Big)^{1/\theta}\Big\|_p\nonumber\\
&\lesssim& \sum_{1\leq \gamma\leq m-\alpha}\Big\|\Big(\sum\limits_{j\in
\N_0}2^{(s-m)j\theta}\big|f_{j+\ell}^{(\gamma)}(\xi)\big|_{\xi=\psi(\cdot)}\cdot\varphi^{1/p}
\big|^{\theta}\Big)^{1/\theta}\Big\|_p\\
&\lesssim& \sum_{1\leq \gamma\leq m-\alpha}\Big\|\Big(\sum\limits_{j\in
\N_0}2^{(s-m)j\theta}\big|f_{j+\ell}^{(\gamma)}\big|^{\theta}\Big)^{1/\theta}\Big\|_p\,.\nonumber
\eeq
By Lemma~\ref{maxunglem} and a homogeneity argument we have for any $a>0$
\be\label{f31}
        |f^{(\gamma)}_{j+\ell}(t)| \;\leq\; \sup\limits_{\xi\in \R}
\frac{|f^{(\gamma)}_{j+\ell}(\xi)|}{(1+2^{j+\ell}|t-\xi|)^a} \;\lesssim\; 2^{(j+\ell)m}\sup\limits_{\xi\in \R}
\frac{|f_{j+\ell}(\xi)|}{(1+2^{j+\ell}|t-\xi|)^a}
\ee
if $1\leq \gamma\leq m$\,. Putting this into \eqref{f24} with $a>\max\{1/p,1/\theta\}$ and using Theorem \ref{peetremax}
afterwards yields
\begin{equation}\label{secondsum}
 \begin{split}
 F_{\alpha,\ell}  &\lesssim \sum_{1\leq \gamma\leq m-\alpha}2^{\ell(m-s)}\Big\|\Big(\sum\limits_{j\in
\N_0}2^{(j+\ell)s\theta} \big|P_{j+\ell,a}(f_{j+\ell},\cdot)\big|^{\theta}\Big)^{1/\theta}\Big\|_p\\
  &\lesssim~ 2^{\ell(m-s)}\|f\|_{F^s_{p,\theta}}\,.
\end{split}
  \end{equation}
From \eqref{firstsum}, \eqref{f24}, and \eqref{secondsum} we obtain
\beq\label{f26}
  \sum\limits_{\ell < 0} \Big\|\Big(\sum\limits_{j\in
\N_0}2^{js\theta}\Big[2^j\int_{-2^{-j}}^{2^{-j}}|\Delta_h^m(T^{\psi}_1f_{j+\ell},\cdot)|\,dh\Big]^{\theta}\Big)^{
1/\theta}\Big\|_p &\lesssim& \sum\limits_{\ell<0}2^{\ell(m-s)}\|f\|_{F^s_{p,\theta}}\nonumber\\
  &\lesssim & \|f\|_{F^s_{p,\theta}}\,.
\eeq

\noindent
{\it Step 3.} It remains to deal with $\sum_{\ell\geq 0}$. We expand the difference and estimate
\beq
    &&\sum\limits_{\ell \ge 0} \Big\|\Big(\sum\limits_{j\in
\N_0}2^{js\theta}\Big[2^j\int_{-2^{-j}}^{2^{-j}}|\Delta_h^{m}(T^{\psi}_1f_{j+\ell},\cdot)|\,dh\Big]^{\theta}\Big)^{
1/\theta}\Big\|_p\nonumber\\
   &&\qquad \qquad \lesssim~ \sum\limits_{\ell \ge 0} \sum_{i=0}^{m}\Big\|\Big(\sum\limits_{j\in
\N_0}2^{js\theta}\Big[2^j\int_{-2^{-j}}^{2^{-j}}|T^{\psi}_1f_{j+\ell}(\cdot+ih)|\,dh\Big]^{\theta}\Big)^{1/\theta}
\Big\|_p\,.\label{f25}
\eeq
We consider the term
\beq
A_{i,\ell} &=&\Big\|\Big(\sum\limits_{j\in
\N_0}2^{js\theta}\Big[2^j\int_{-2^{-j}}^{2^{-j}}|(T^{\psi}_1f_{j+\ell})(\cdot+ih)|\,dh\Big]^{\theta}\Big)^{1/\theta}
\Big\|_p \nonumber\\
&\lesssim &\Big\|\Big(\sum\limits_{j\in
\N_0}2^{js\theta}\big(M[T^{\psi}_1f_{j+\ell}](\cdot)\big)^{\theta}\Big)^{1/\theta}
\Big\|_p \,.\label{f25-1}
\eeq
Here again we use the Hardy-Littlewood maximal function. From Theorem \ref{feffstein} again we get
\beq
A_{i,\ell} &\lesssim& \Big\|\Big(\sum\limits_{j\in
\N_0}2^{js\theta}\big|(T^{\psi}_1f_{j+\ell})\big|^{\theta}\Big)^{1/\theta}\Big\|_p \nonumber\\
     &\lesssim&  2^{-\ell s}\Big\|\Big(\sum\limits_{j\in
\N_0}2^{(j+\ell)s\theta}|\varphi(\cdot) f_{j+\ell}(\psi(\cdot))|^{\theta}\Big)^{1/\theta}\Big\|_p\nonumber\\
     &\lesssim & 2^{-\ell s}\Big\|\Big(\sum\limits_{j\in
\N_0}2^{(j+\ell)s\theta}|f_{j+\ell}|^{\theta}\Big)^{1/\theta}\Big\|_p\nonumber\\
     &\lesssim & 2^{-\ell s}\|f\|_{F^s_{p,\theta}}\,.\label{f27}
\eeq
We performed a change of variable in the second step and used $p> 1$ and 
$\phi \in C_0^k(\R)$. With \eqref{f25}, \eqref{f25-1}, and \eqref{f27} we obtain
\beq
     &&\sum\limits_{\ell \ge 0} \Big\|\Big(\sum\limits_{j\in
\N_0}2^{js\theta}\Big[2^j\int_{-2^{-j}}^{2^{-j}}|\Delta_h^{m}(T^{\psi}_1f_{j+\ell},\cdot)|\,dh\Big]^{\theta}\Big)^{
1/\theta}\Big\|_p\nonumber\\
&&\qquad\qquad\qquad\qquad\lesssim ~\sum\limits_{i=1}^m\sum\limits_{\ell \geq 0} A_{i,\ell}\lesssim
\sum\limits_{\ell\geq 0} 2^{-\ell
s}\|f\|_{F^s_{p,\theta}} \lesssim \|f\|_{F^s_{p,\theta}}\,,\label{f27-1}
\eeq
where we used $s>0$ in the last inequality. 
Altogether we obtain from \eqref{f26} and \eqref{f27-1} 
$$
    \Big\|\Big(\sum\limits_{j\in \N_0}
2^{js\theta}\Big[2^j\int_{-2^{-j}}^{2^{-j}}|\Delta_h^m(T^{\psi}_1f,\cdot)|\,dh\Big]^{\theta}\Big)^{1/\theta}\Big\|_p
    \ \lesssim \ \|f\|_{F^s_{p,\theta}}\,.
$$
In view of \eqref{f-com} we conclude that
\beqq
 \|T^{\psi}_1f\|^{(m)}_{F^s_{p,\theta}} \lesssim \|f\|_{F^s_{p,\theta}}\,,
\eeqq
which finishes the proof. \\
\qed
\subsection{Proof of Theorems \ref{changemixed-B} and \ref{changemixed-F}}


\noindent
{\bf Proof of Theorem \ref{changemixed-F}.}
{\em Step 1.} We aim at adapting the proof of Lemma \ref{univ-f} to the multivariate mixed situation by applying the
arguments direction-wise. The proper tool will be the characterization by rectangular means of differences, see Theorem
\ref{fdiff}. Let $\{\eta_{j}\}_{j\in \N_0^d}$ be a tensorized compactly supported smooth dyadic decomposition of unity
and 
\begin{equation}\label{bblocks}
      f_j:=\F^{-1}[\eta_j \F f]\quad,\quad j\in \Z^d\,,
\end{equation}
where we put $f_j\equiv 0$ if $j\notin \N_0^d$\,. Analogously, to \eqref{f-com} we estimate the quantity
\beq 
    &&\Big\|\Big(\sum\limits_{j\in
\N_0^d}2^{|j|_1s\theta}\mathcal{R}^{e(j)}_m(T^{\psi}_df,2^{-j},\cdot)^{\theta}\Big)^{1/\theta}\Big\|_p\nonumber\\
    &&\qquad\qquad\qquad\leq~ \sum\limits_{\ell\in \Z^d} \Big\|\Big(\sum\limits_{j\in
\N_0^d}2^{|j|_1s\theta}\mathcal{R}^{e(j)}_m(T^{\psi}_df_{j+\ell},2^{-j},\cdot)^{\theta}\Big)^{1/\theta}\Big\|_p\,,\label{start2}
\eeq
where $2^{-j} = (2^{-j_1},...,2^{-j_d})$. Again we choose $m=\lfloor s \rfloor+1$. Let us introduce some further
notation first. If
$e\subset[d]$ and $j\in \N_0^d$ we denote

\begin{equation}\label{Gammaj}
\Gamma_j^{e}=  \bigtimes_{i\in e} [-2^{-j_i},2^{-j_i}]\,.
\end{equation}
For each $\ell\in \Z^d$ we denote 
\begin{equation}\label{ejl}
e_1(j,\ell)=e(j)\cap \{i: \ell_i\geq 0\}\qquad
\text{and}\qquad e_2(j,\ell)= e(j)\cap \{i: \ell_i<0\}. 
\end{equation}
For simplicity we put
\beqq
F_{\ell}:=\Big\|\Big(\sum\limits_{j\in
\N_0^d}2^{|j|_1s\theta}\mathcal{R}^{e(j)}_m(T^{\psi}_df_{j+\ell},2^{-j},\cdot)^{\theta}\Big)^{1/\theta}\Big\|_p\,.
\eeqq
{\it Step 2.} Estimation of $F_{\ell}$. We have
\beq
 \Delta_{h}^{m,e(j)} (T^{\psi}_df_{j+\ell},x)&=&\Delta_{h}^{m,e_1(j,\ell)}  \circ \Delta_{h}^{m,e_2(j,\ell)}
(T^{\psi}_df_{j+\ell},x)\nonumber \\
 &=&\sum_{k} \Delta_{h}^{m,e_2(j,\ell)}(T^{\psi}_df_{j+\ell},x+(k\cdot h))\label{delta}
\eeq
where the sum on the right-hand side is taken over all $k\in \N_0^d$ such that $0\leq k_i\leq m$ if $i\in e_1(j,\ell)$
and $k_i=0$ otherwise. In addition, we put $(k\cdot h)=(k_1h_1,...,k_dh_d)$. Using the convolution representation of the $m$th order difference in $e_2(j,\ell)$ we have the inequality
\beqq
&&\big| \Delta_{h}^{m,e_2(j,\ell)}\big(T^{\psi}_df_{j+\ell},x+(k\cdot h)\big)\big|\\
&&\qquad\qquad\lesssim\, \Big(\prod_{i\in
e_2(j,\ell)}2^{-j_im}\Big)M_{e_2(j,\ell)}\big[D^{(m_{e_2(j,\ell)})}(T^{\psi}_df_{j+\ell})\big](x+(k\cdot h))\,,
\eeqq
see \eqref{bm}. Here $m_{e_2(j,\ell)}=(m_1,...,m_d)$ where $m_i=m$ if $i\in e_2(j,\ell)$ otherwise $m_i=0$. This leads to
\beqq
 &&\mathcal{R}^{e(j)}_m(T^{\psi}_df_{j+\ell},2^{-j},x) \\
 &&\qquad\lesssim~ \sum_{k}  \int_{\Gamma_j^{e(j)}} 
     \big|\Delta_{h}^{m,e_2(j,\ell)}\big(T^{\psi}_df_{j+\ell},x+(k\cdot h)\big)\big|\prod_{i\in e(j)}2^{j_i}dh_i\\
    &&\qquad\lesssim ~  \sum_{k}  \int_{\Gamma_j^{e(j)}} \Big(\prod_{i\in
    e_2(j,\ell)}2^{-j_im}\Big)M_{e_2(j,\ell)}\big[D^{(m_{e_2(j,\ell)})}(T^{\psi}_df_{j+\ell})\big](x+(k\cdot h))\prod_{i\in e(j)}2^{j_i}dh_i\,.
\eeqq 
Now we estimate above by the Hardy-Littlewood maximal function with the components in $e_1(j,\ell)$ and obtain
\beqq
 \mathcal{R}^{e(j)}_m(T^{\psi}_df_{j+\ell},2^{-j},x) &\lesssim &  \Big(\prod_{i\in e_2(j,\ell)}
2^{-j_im}\Big)M_{e_1(j,\ell)}\big[M_{e_2(j,\ell)}\big[D^{(m_{e_2(j,\ell)})}(T^{\psi}_df_{j+\ell})\big]\big](x)\\
&\lesssim &  \Big(\prod_{i\in e_2(j,\ell)}
2^{-j_im}\Big)M_{[d]}\big[D^{(m_{e_2(j,\ell)})}(T^{\psi}_df_{j+\ell})\big](x) \,.
\eeqq
Plugging this into $F_{\ell}$, Theorem \ref{feffstein} yields
\beqq
F_{\ell}
 &\lesssim &\Big\|\Big(\sum\limits_{j\in \N_0^d}2^{|j|_1s\theta}\Big(\prod_{i\in
e_2(j,\ell)}2^{-j_im}\Big)\big|D^{(m_{e_2(j,\ell)})}(T^{\psi}_df_{j+\ell})(\cdot)\big|^{\theta}\Big)^{1/\theta}\Big\|_p\,.
\eeqq
Using Leibniz's formula we estimate
\begin{equation}\begin{split} \label{ine1}
\big|D^{(m_{e_2(j,\ell)})}(T^{\psi}_df_{j+\ell})(x)\big|& = \Big|D^{(m_{e_2(j,\ell)})}\Big( \big[
\bigotimes_{i=1}^d\varphi\big] f_{j+\ell}(\psi ,...,\psi )\Big)(x)\Big|\\
&\lesssim \sum_{\alpha}\Big| D^{(\alpha)} \Big[ \bigotimes_{i=1}^d\varphi\Big](x) D^{(\beta)}[f_{j+\ell}(\psi ,...,\psi
)](x)\Big|\,. 
\end{split}\end{equation}
Here the sum on the right-hand side is taken over all $\alpha\in \N_0^d$ with $0\leq\alpha_i\leq m$ if $i\in
e_2(j,\ell)$, otherwise $\alpha_i=0$ and $\beta=m_{e_2(j,\ell)}-\alpha.$ We continue estimation by dividing
$e_2(j,\ell)=e_2^1(j,\ell)\cup e_2^2(j,\ell)$ where $\alpha_i<m$ if $i\in e_2^1(j,\ell)$ and $\alpha_i=m$ if $i\in
e_2^2(j,\ell)$. We have
\beqq
&&D^{(\alpha)} \Big[ \bigotimes_{i=1}^d\varphi\Big](x) D^{(\beta)}[f_{j+\ell}(\psi ,...,\psi )](x)\\
&&\quad=~\Big(\prod_{i\in e_0(j)\cup e_1(j,\ell)}\varphi (x_i)\Big)\Big(\prod_{i\in e_2^2(j,\ell)}
\varphi^{(m)}(x_i)\Big)\Big(\prod_{i\in e_2^1(j,\ell)} \varphi^{(\alpha_i)}(x_i)\Big)D^{(\beta)}[f_{j+\ell}(\psi
,...,\psi )](x)\,.
\eeqq
Now the condition $\varphi \in C^k_0(\R)$, 
$k>m+1$, 
implies that 
\beqq
&&\Big|\Big(\prod_{i\in e_2^1(j,\ell)} \varphi^{(\alpha_i)}(x_i)\Big)D^{(\beta)}[f_{j+\ell}(\psi ,...,\psi )](x)\Big| \\
&&\qquad\qquad\qquad\qquad\qquad\lesssim~  \Big(\prod_{i\in e_2^1(j,\ell)}|\varphi (x_i)|^{1/p}
\Big)\cdot\big|[D^{(\beta)}f_{j+\ell}](\psi(x_1),...,\psi(x_d)\big|,
\eeqq
see \eqref{ct0}, which leads to
\beqq
&&\Big|D^{(\alpha)} \Big[ \bigotimes_{i=1}^d\varphi\Big](x) D^{(\beta)}[f_{j+\ell}(\psi ,...,\psi )](x)\Big|\\
&&\qquad\qquad \lesssim \Big(\prod_{i\in [d]\backslash e_2^2(j,\ell)}|\varphi (x_i)|^{1/p}
\Big)\cdot\Big|\Big(\prod_{i\in e_2^2(j,\ell)}
\varphi^{(m)}(x_i)\Big)[D^{(\beta)}f_{j+\ell}](\psi(x_1),...,\psi(x_d)\Big|\,.
\eeqq
Changing variable we obtain
\begin{equation}\begin{split}\label{ine2}
&\Big\|\Big(\sum\limits_{j\in \N_0^d}2^{|j|_1s\theta}\Big(\prod_{i\in e_2(j,\ell)}2^{-j_im\theta}\Big)\Big|D^{(\alpha)}
\Big[ \bigotimes_{i=1}^d\varphi\Big]\cdot D^{(\beta)}[f_{j+\ell}(\psi ,...,\psi)]
\Big|^{\theta}\Big)^{1/\theta}\Big\|_p\\
&\lesssim\;\Big\|\Big(\sum\limits_{j\in \N_0^d}2^{|j|_1s\theta}\Big(\prod_{i\in
e_2(j,\ell)}2^{-j_im\theta}\Big)\Big|\Big(\prod_{i\in e_2^2(j,\ell)}
\varphi^{(m)}(x_i)\Big)[D^{(\beta)}f_{j+\ell}](x,\psi_{e_2^2(j,\ell)} 
)\Big|^{\theta}\Big)^{1/\theta}\Big\|_p\,.
\end{split}
\end{equation}
Here $[D^{(\beta)}f_{j+\ell}](x,\psi_{e_2^2(j,\ell)})=[D^{(\beta)}f_{j+\ell}](z_1,...,z_d)$ with $z_i=\psi(x_i)$ if
$i\in e_2^2(j,\ell) $ otherwise $z_i=x_i$. Using the same scaling argument as in \eqref{ct4} and \eqref{f31} yields
\begin{equation}\begin{split}\label{ine3}
\Big|\Big(\prod_{i\in e_2^2(j,\ell)}& \varphi^{(m)}(x_i)\Big)D^{(\beta)}[f_{j+\ell}](x,\psi_{e_2^2(j,\ell)} )\Big|\\
&\quad\lesssim~ \Big(\prod\limits_{i\in e_2^2(j,\ell)}2^{(j_i+\ell_i)m}\Big)\sup\limits_{y\in
\R^{d}}\frac{|D^{\beta}f_{j+\ell}(y)|}{\prod\limits_{i=1}^d(1+2^{j_i+\ell_i}|x_i-y_i|)^a}\\
&\quad\lesssim~  \Big(\prod\limits_{i\in e_2^2(j,\ell)}2^{(j_i+\ell_i)m}\Big)\Big(\prod\limits_{i\in
e_2^1(j,\ell)}2^{(j_i+\ell_i)\beta_i}\Big)\sup\limits_{y\in \R^{d}}\frac{|
f_{j+\ell}(y)|}{\prod\limits_{i=1}^d(1+2^{j_i+\ell_i}|x_i-y_i|)^a}\\
&\quad\lesssim~  \Big(\prod\limits_{i\in
e_2(j,\ell)}2^{(j_i+\ell_i)m}\Big)P_{2^{j+\ell},a}f_{j+\ell}(x)\,
\end{split}\end{equation}
with $m\geq a$. Now we obtain from \eqref{ine1}, \eqref{ine2} and \eqref{ine3}
\beqq
F_{\ell}
 &\lesssim &\Big\|\Big(\sum\limits_{j\in \N_0^d}2^{|j|_1s\theta} \Big(\prod\limits_{i\in e_2(j,\ell)}2^{\ell_i
m\theta}\Big)\big|P_{2^{j+\ell},a}f_{j+\ell}\big|^{\theta}\Big)^{1/\theta}\Big\|_p\\
 &\lesssim &\Big\|\Big[\sum\limits_{j\in \N_0^d}\Big( \prod\limits_{i\in e_2(j,\ell)}2^{\ell_i
(m-s)}\Big)\Big(\prod\limits_{i\in e_0(j)\cup e_1(j,\ell)}2^{-s\ell_i } \Big)2^{|j+\ell|_1s\theta}
\big|P_{2^{j+\ell},a}f_{j+\ell}\big|^{\theta}\Big]^{1/\theta}\Big\|_p\,\\
  &\lesssim &\Big\|\Big[\sum\limits_{j\in \N_0^d}\Big(\prod\limits_{i:\, \ell_i<0}2^{\ell_i
(m-s)}\Big)\Big(\prod\limits_{i:\, \ell_i\geq 0}2^{-s\ell_i }\Big)  2^{|j+\ell|_1s\theta}
\big|P_{2^{j+\ell},a}f_{j+\ell}\big|^{\theta}\Big]^{1/\theta}\Big\|_p\,
\eeqq
with $a>\max\{1/p,1/\theta\}$. This is because $i\in e_0(j)$ implies $\ell_i\geq 0$.  Then Theorem \ref{peetremax}
yields
\beqq
F_{\ell}
 &\lesssim & \Big(\prod\limits_{i:\, \ell_i<0}2^{\ell_i (m-s)}\Big)\Big(\prod\limits_{i:\, \ell_i\geq 0}2^{-s\ell_i
}\Big)   \|f\|_{\Fspt}\,.
\eeqq
{\it Step 3.} In a view of \eqref{start2} we conclude that
\begin{equation}\nonumber
     \Big\|\Big(\sum\limits_{j\in
\N_0^d}2^{|j|_1s\theta}\mathcal{R}^{e(j)}_m(T^{\psi}_df,2^{-j},\cdot)^{\theta}\Big)^{1/\theta}\Big\|_p
      ~\lesssim~ \|f\|_{\Fspt}\,.
\end{equation}
The proof is complete.\qed
\vskip 5mm
\noindent
{\bf Proof of Theorem \ref{changemixed-B}.} {\it Step 1.}  
Let $\{\eta_{j}\}_{j\in \N_0^d}$ and $\{f_{j}\}_{j\in\Z^d}$ be defined as in the proof of Theorem~\ref{changemixed-F}. 
Analogous to \eqref{start2} we have
\beqq
     \|T^{\psi}_df\|_{\mathbf{B}^s_{p,\theta}}^{(m)} 
& \leq&  \Big(\sum\limits_{j\in \N_0^d} 2^{|j|_1s\theta}\Big( \sum\limits_{\ell \in \Z^d}\big\|\mathcal{R}^{e(j)}_m(T^{\psi}_df,2^{-j},\cdot)\big\|_p\Big)^{\theta}\Big)^{1/\theta}\\
& \leq& \Big(\sum_{\ell \in \Z^d}\sum\limits_{j\in \N_0^d} 2^{|j|_1s\theta} \big\|\mathcal{R}^{e(j)}_m(T^{\psi}_df_{j+\ell},2^{-j},\cdot)\big\|_p^{\theta}\Big)^{1/\theta}\,.
\eeqq
Here we assume $\theta<1$. If $\theta\geq 1$ we use triangle inequality.\\
{\it Step 2.} The case $p>1$. We put $m=\lfloor s\rfloor +1$. The proof is similar in 
	$\mathbf F$-spaces but less technical. Here we use the classical Hardy-Littlewood maximal 
	inequality (the scalar version of Theorem \ref{feffstein}) and the scalar version of 
	Theorem \ref{peetremax}. These inequalities do not depend on the parameter $\theta$. Hence the result in the $\mathbf B$-spaces can be extended to $0<\theta\leq \infty$. \\
{\it Step 3.} The case $p=1$. Using the convolution representation of the $m$th order 
	difference in $e_2(j,\ell)$ we obtain from \eqref{delta}
\beqq
&& \Delta_{h}^{m,e(j)} (T^{\psi}_df_{j+\ell},x)\\
 &&\qquad=\sum_{k} \Delta_{h}^{m,e_2(j,\ell)}(T^{\psi}_df_{j+\ell},x+(k\cdot h))\\
 &&\qquad=\sum_{k}\int_{\R^{|e_2(j,\ell)|}}D^{(m_{e_2(j,\ell)})}(T^{\psi}_df_{j+\ell})(x+y+(k\cdot h))\prod_{i\in e_2(j,\ell)}h_i^{m-1}M_m(h_i^{-1}y_i)dy_i\,.
\eeqq
Here $y=(y_1,...,y_d)$ with $y_i=0$ if $i\not\in e_2(j,\ell)$. We used the same notation in 
the proof of Theorem \ref{changemixed-F}, see \eqref{Gammaj} and \eqref{ejl}. Inserting this into $\big\|\mathcal{R}^{e(j)}_m(T^{\psi}_df,2^{-j},\cdot)\big\|_1$ and then changing the order of integration with the fact that
$$ \prod_{i\in e_2(j,\ell)}\int_{\R}h_i^{-1} M_n(h_i^{-1}y_i) dy_i=1\,$$
 we obtain 
\beqq \big\|\mathcal{R}^{e(j)}_m(T^{\psi}_df,2^{-j},\cdot)\big\|_1 & \lesssim &    \Big(\prod_{i\in e_2(j,\ell)} 2^{-j_im}\Big) \int_{\Gamma_j^{e(j)}}\big \| D^{(m_{e_2(j,\ell)})}(T^{\psi}_df_{j+\ell})(\cdot )\big\|_1 \prod_{i\in e(j)}2^{j}dh_i \\
& \lesssim &  \Big(\prod_{i\in e_2(j,\ell)} 2^{-j_im}\Big) \big \| D^{(m_{e_2(j,\ell)})}(T^{\psi}_df_{j+\ell})(\cdot )\big\|_1\,.
\eeqq
Next step is carried out as $\mathbf F$-spaces. 
Note that in this case we choose $m=\lfloor s\rfloor +2$, since 
there must exist $a$ such that $1<a\leq m$ for the inequality \eqref{ine3} to hold. 
The proof is complete. \\
\qed

\begin{rem}\label{rem:p1}
The last step of the proof for $p=1$ shows that, based on our method, we have to guarantee 
that $\max\{1,s\}< m$. That's why we need the more restrictive condition 
$k> m+1=\lfloor s\rfloor+3$ in Theorem~\ref{t2}. 
Under the additional assumption $s\ge1$ we can relax this condition to 
$k> \lfloor s\rfloor+2$ as in the case $p>1$.

Another possibility to relax this condition is to use Nikol'skij inequality at the appropriate point, 
as it was already done in \cite{Du2}.
\end{rem}

\section{Pointwise multiplication}
\label{sec:mult}

We will first comment on the result for ${\mathbf F}$-spaces in Theorem \ref{pointmult-F}. 
Again, we make use of the useful characterization by differences in Theorem \ref{fdiff}. 
The technique used in the proof might not be ``optimal'' in the sense of Remarks
\ref{rem1}, \ref{rem2} below. However, it is quite transparent and works well for spaces with dominating mixed
smoothness ${\mathbf A}^s_{p,\theta}$ where almost nothing is known in this direction. Recall the multiplication
operator $\wt T_d^{\psi}$ is defined by
\[
\wt T_d^{\psi}\colon\, f(x) \;\mapsto\; \psi(x)\cdot f(x),\qquad 
			f\in L_1(\R^d)\,, \ x\in \R^d\,,
\]
for some sufficiently smooth and compactly supported function $\psi$. 
Similar to Lemma \ref{univ-f} we will proof a one-dimensional version 
of Theorem \ref{pointmult-F} first.

\begin{lem}\label{univ2} Let $0<p < \infty$, $0<\theta \leq\infty$ and $s>\sigma_{p,\theta}$. Let further
$\psi\in C_0^k(\Omega)$ with $k\geq \lfloor s \rfloor+1$ for some compact set $\Omega\subset\R$. 
Then 
\begin{equation}\label{mult_rel}
      \|{\wt T}^{\psi}_1 f\|_{F^s_{p,\theta}} \lesssim
      \|f\|_{F^s_{p,\theta}(\mathbb{T})},\quad f\in
F^s_{p,\theta}(\mathbb{T})\,.
\end{equation}
\end{lem}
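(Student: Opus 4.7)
The plan is to adapt the template of the proof of Lemma~\ref{univ-f}, replacing the change of variable with pointwise multiplication, which turns out to be somewhat simpler. The starting point is the rectangular-means-of-differences characterization (Theorem~\ref{fdiff}) with $m=\lfloor s\rfloor+1$. Littlewood--Paley decomposing $f=\sum_\ell f_{j+\ell}$ (where $f_n=\F^{-1}[\eta_n\F f]$, $f_n\equiv 0$ for $n<0$) and using the quasi-triangle inequality, one bounds
\[
\|\wt T^\psi_1 f\|^{(m)}_{F^s_{p,\theta}} \,\lesssim\, \sum_{\ell\in\Z}\Big\|\Big(\sum_{j\in\N_0} 2^{js\theta}\Big[2^j\int_{-2^{-j}}^{2^{-j}} |\Delta_h^m(\psi f_{j+\ell},\cdot)|\,dh\Big]^\theta\Big)^{1/\theta}\Big\|_p,
\]
and splits the outer sum into $\ell<0$ (low-frequency, where differences see a smooth block) and $\ell\ge 0$ (high-frequency, where differences see an oscillatory block).

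For $\ell<0$ one proceeds exactly as in Step~2 of the proof of Lemma~\ref{univ-f}: using the B-spline convolution representation $|\Delta_h^m g|\lesssim |h|^m M(g^{(m)})$ with $g=\psi f_{j+\ell}$, Leibniz's expansion $(\psi f_{j+\ell})^{(m)}=\sum_{\alpha=0}^m\binom{m}{\alpha}\psi^{(\alpha)} f_{j+\ell}^{(m-\alpha)}$, the pointwise bound $|f_{j+\ell}^{(m-\alpha)}|\lesssim 2^{(j+\ell)(m-\alpha)} P_{j+\ell,a} f_{j+\ell}$ from Lemma~\ref{maxunglem}, and finally Theorem~\ref{peetremax}. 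The exponents collapse to a geometric factor $2^{\ell(m-s)}$ in front of $\|f\|_{F^s_{p,\theta}}$, which is summable over $\ell<0$ since $m>s$. Note that in this regime the boundedness of $\psi$ and its derivatives suffices; one does not need the delicate $L_\infty$ estimates of Lemma~\ref{lem:Linfty} that were central to Lemma~\ref{univ-f}.

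The case $\ell\ge 0$ is where the quasi-Banach assumption $s>\sigma_{p,\theta}$ becomes delicate and constitutes the main technical obstacle. A naive bound via translates, $|\Delta_h^m(\psi f_{j+\ell})|\lesssim\|\psi\|_\infty\sum_i|f_{j+\ell}(\cdot+ih)|$, followed by a Peetre-type estimate, forces a condition of the form $\max\{1/p,1/\theta\}<a<s$, which is strictly stronger than $s>\sigma_{p,\theta}$ in the quasi-Banach regime $\min\{p,\theta\}<1$. The remedy is the discrete Leibniz rule
\[
\Delta_h^m(\psi f)(x) \,=\, \sum_{k=0}^m \binom{m}{k}\Delta_h^k(\psi)(x)\cdot\Delta_h^{m-k}(f)(x+kh),
\]
combined with $|\Delta_h^k(\psi)(x)|\le|h|^k\|\psi^{(k)}\|_\infty\le 2^{-jk}\|\psi\|_{C^k}$ for $|h|\le 2^{-j}$ (valid since $\psi\in C^k_0$ with $k\ge m$). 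The $k=0$ summand contributes $\|\psi\|_\infty|\Delta_h^m f(x)|$, which---thanks to the compact support of $\psi$---is controlled by $\|f\|^{(m)}_{F^s_{p,\theta}(\mathbb{T})}\asymp\|f\|_{F^s_{p,\theta}(\mathbb{T})}$ via the torus analogue of Theorem~\ref{fdiff}, with no maximal inequalities required. For each $k\ge 1$ the prefactor $2^{-jk}$ effectively reduces the smoothness index from $s$ to $s-k<s$, and the residual difference $|\Delta_h^{m-k}f(x+kh)|$ is handled by Peetre maximal estimates applied Littlewood--Paley-blockwise on $f$; the geometric gain $2^{-jk}$ now provides enough room to choose the Peetre parameter $a$ satisfying the convergence constraint in the full range $s>\sigma_{p,\theta}$. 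A careful bookkeeping of these error terms, parallel to Step~3 of Lemma~\ref{univ-f}, then completes the argument.
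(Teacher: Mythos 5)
Your overall template (rectangular means, Littlewood--Paley decomposition $f=\sum_\ell f_{j+\ell}$, split into $\ell<0$ and $\ell\ge 0$) matches the paper, but both halves contain genuine gaps in the quasi-Banach regime.

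For $\ell<0$ you insert the Hardy--Littlewood maximal function via the B-spline representation $|\Delta_h^m g|\lesssim |h|^m M(g^{(m)})$ and then appeal to Theorem~\ref{feffstein} to remove it. But that theorem requires $1<p<\infty$ and $1<\theta\le\infty$, which is not available here. The paper instead proves and uses a pointwise lemma (Lemma~\ref{1dim}) that gives $|\Delta_h^m(\psi f_{j+\ell},t)|\lesssim\max\{1,|bh|^a\}\min\{1,|bh|^m\}\,P_{b,a}f_{j+\ell}(t)$ with $b=2^{j+\ell}$ \emph{without} any $M$, purely from the mean value theorem and Leibniz' rule, so that the Peetre maximal inequality alone (valid for all $0<p,\theta$) closes the low-frequency case with the factor $2^{\ell m}$. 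Your version can be repaired by carrying the Peetre bound through the B-spline convolution instead of wrapping it in $M$, but as written it only works for $p,\theta>1$.

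The more serious gap is in $\ell\ge 0$, where you correctly identify that the naive translate bound would force $a<s$, $a>\max\{1/p,1/\theta\}$, but your proposed remedy (the discrete Leibniz rule with $|\Delta_h^k\psi|\lesssim 2^{-jk}$) does not buy what you think. The gain $2^{-jk}$ is a gain in the variable $j$, not in $\ell$: reindexing $n=j+\ell$ turns $2^{j(s-k)}$ into $2^{-\ell(s-k)}2^{n(s-k)}$, and for $k\ge1$ the resulting $\ell$-prefactor $2^{\ell(a-s+k)}$ has a \emph{worse} exponent than the $k=0$ term, so the $\ell$-sum converges only if $a<s-k$, i.e.\ $s>\max\{1/p,1/\theta\}+k$ --- strictly stronger than $s>\sigma_{p,\theta}$. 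Also, the $k=0$ term applied blockwise to $f_{j+\ell}$ is not simply ``$\|f\|^{(m)}_{F^s_{p,\theta}(\mathbb{T})}$ with no maximal inequalities'' once you sum over $\ell\ge0$; it is again the naive translate estimate. The paper's key idea, which your proposal lacks, is an interpolation trick: fix $0<\lambda<\min\{p,\theta\}$ with $s>a(1-\lambda)$, write $|\Delta_h^m(\psi f_{j+\ell})|=|\cdots|^{1-\lambda}\cdot|\cdots|^{\lambda}$, bound the first factor by $2^{\ell a(1-\lambda)}(P_{2^{j+\ell},a}f_{j+\ell})^{1-\lambda}$ via Lemma~\ref{1dim} and integrate the second factor (using $\lambda<1$ to pull it inside) to get $M[|f_{j+\ell}|^{\lambda}]$, which lives in $L_{p/\lambda}(\ell_{\theta/\lambda})$ where $p/\lambda,\theta/\lambda>1$. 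H\"older with exponents $1/(1-\lambda)$ and $1/\lambda$ then applies the periodic Peetre and Fefferman--Stein inequalities separately. The constraint $s>a(1-\lambda)$ is exactly the flexible one that matches $s>\sigma_{p,\theta}$ when $\lambda\uparrow\min\{p,\theta\}$ and $a\downarrow\max\{1/p,1/\theta\}$. Without this device your argument only reaches the Banach range $s>\max\{1/p,1/\theta\}$.
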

\begin{rem}[Small values of $s$]\hspace{\fill}\label{rem1}\\
\indent {\em (i)} We believe that the conditions $s>\sigma_{p,\theta}$ and $s>\sigma_p$ in Lemma \ref{univ2} and
Theorems \ref{pointmult-B}, \ref{pointmult-F} are only technical and caused by our proof technique. On the one hand
$s>\sigma_{p,\theta}$ (and $s>\sigma_p$ in the $B$-case) is necessary for the characterization of Triebel-Lizorkin
spaces by means of differences, see Theorems \ref{diff}, \ref{fdiff} and \cite{ChSe06}, and on the other hand our proof
below requires this condition. One may relax (or even remove) this condition by using the characterization of
Triebel-Lizorkin spaces via
local means, see Proposition \ref{local} above, \cite[Thm.\ 4.2.2]{Tr92} and \cite[Thm.\ 1.25]{Vyb06}. Note that
removing the condition $s>\sigma_{p,\theta}$ will extend the range of parameters for which \eqref{per_circle}
holds. In fact, if $1/\theta-1> 1/p$ then the case $1/p\leq s<1/\theta-1$ for the ${\mathbf F}$-scale in
\eqref{per_circle} is not covered by Theorem \ref{pointmult-F}, although it is reasonable since ${\mathbf
F}^s_{p,\theta} \hookrightarrow C(\R^d)$. In other words, we would obtain

$$
   \mbox{Int}_n(\mathring{\mathbf A}_{p,\theta}^s) \asymp
   \mbox{Int}_n({\mathbf A}_{p,\theta}^s(\mathbb{T}^d))\,,\quad n\in \N\,,
$$
holds true {\it whenever} ${\mathbf A}_{p,\theta}^s$ is embedded into $C(\R^d)$ which represents a certain minimal
condition such that \eqref{f01} makes sense. 

{\em (ii)} The condition $s>\sigma_p$ ensures that the spaces ${\mathbf A}_{p,\theta}^s$ consist of regular
distributions represented by a locally integrable function, see Lemma \ref{emb},(i) above. Therefore, the pointwise
multiplication is well-defined. However, inspecting the proofs below we only need to give sense to the pointwise
multiplication for smooth building blocks, see \eqref{bblocks} above, such that we may define
\begin{equation}\label{para1}
    {\wt T}^{\psi}_d f := \sum\limits_{j\in \N_0^d} \psi(x) f_j(x)\,,
\end{equation}
where convergence is considered in $\mathcal{S}'(\R^d)$\,. From that point of view one may even drop the condition
$s>\sigma_p$ at the price of $k=k(p,s)$ in both, $B$ and $F$-spaces. We leave the details to the interested reader.
\end{rem}

\begin{rem}[Pointwise multiplier spaces and optimality] \label{rem2} \quad\\
\indent {\em (i)} We ask for a sharp description of the space of pointwise multipliers for spaces
$M({\mathbf A}^s_{p,\theta}(\R^d))$. This space might be much larger than $C_0^k(\Omega)$, which we considered in
Theorem \ref{pointmult-F}. Questions of this type have some history, see \cite[Chapt.\ 4]{RuSi96} and the
references therein. In
that sense, the result in Lemma \ref{univ2} is far from being optimal. In fact, combining the technique from \cite[Thm.\
4.9.1]{RuSi96} with \cite[Thms.\ 1.26, 1.29]{Tr08} one can prove the following rather sharp result: If
$A^s_{p,\theta}(\R^d)$ (which is the isotropic version of \eqref{ringspace}) embeds into $C(\R^d)$ then $\psi$ is a
pointwise multiplier in the sense above if $\psi \in
{\mathring A}^s_{p,\theta}(\Omega)$ . What concerns the case
$s<n/p$ we refer to \cite{Si99}.\\
\indent {\em (ii)} The technique used in \cite[Chapt.\ 4]{RuSi96} is not yet developed for spaces with dominating mixed
smoothness ${\mathbf A}^s_{p,\theta}$. This seems to be a difficult task and is not straight-forward. In fact, it is
for instance not clear whether all $\psi\in \mathring{\mathbf A}_{p,\theta}^s(\Omega)$ are pointwise multipliers in
${\mathbf A}^s_{p,\theta} \hookrightarrow C(\R^d)$.

\end{rem}

\subsection{Proof of Lemma \ref{univ2}}
For the proof we need the following variant of \cite[Lemma~3.3.1]{Ul06}.
\begin{lem}\label{1dim}Let $a>0$, $b\geq 1$, $m\in \N$, $h\in \R\setminus\{0\}$,
$\psi \in C_0^{k}(\R)$ with $k\geq m$ and $f\in \S'(\R)$ such that $\supp(\F f) \subset
[-b,b]$\,. Then it holds for any $t\in \R$
\begin{equation}\label{help}
    |\Delta^m_h(\psi f,t)| \leq
C_{m,a,\psi}\max\{1,|bh|^a\}\min\{1,|bh|^m\}P_{b,a}f(t)\,.
\end{equation}
\end{lem}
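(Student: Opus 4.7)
The plan is to split according to the size of $|bh|$, which is natural because $\max\{1,|bh|^a\}\min\{1,|bh|^m\}$ equals $|bh|^m$ when $|bh|\le 1$ and $|bh|^a$ when $|bh|>1$. Throughout, note that $f$ is actually represented by a smooth (even entire) band-limited function on $\R$, since $\F f$ has compact support, so all pointwise estimates and differentiations below are legitimate. A key ingredient that will be used in both cases is the ``almost translation invariance'' of the Peetre maximal function,
\[
P_{b,a}f(t+z) \,\le\, (1+|bz|)^a\,P_{b,a}f(t), \qquad z\in\R,
\]
which follows directly from the definition.

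\emph{Case} $|bh|\ge 1$: I would expand the difference directly,
\[
\Delta_h^m(\psi f,t) \,=\, \sum_{j=0}^m (-1)^{m-j}\binom{m}{j}\psi(t+jh)\,f(t+jh),
\]
bound $|\psi(t+jh)|\le \|\psi\|_\infty$, and estimate $|f(t+jh)|\le (1+|bjh|)^a P_{b,a}f(t) \le (1+m|bh|)^a P_{b,a}f(t) \lesssim_{m,a}|bh|^a P_{b,a}f(t)$, where the last step uses $|bh|\ge 1$. Summing in $j$ gives $|\Delta_h^m(\psi f,t)|\lesssim_{m,a,\psi}|bh|^a P_{b,a}f(t)$, which matches $\max\{1,|bh|^a\}\min\{1,|bh|^m\}$ in this regime.

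\emph{Case} $|bh|<1$: I would use the iterated integral representation
\[
\Delta_h^m(\psi f,t) \,=\, h^m \int_{[0,1]^m} (\psi f)^{(m)}\bigl(t+h(u_1+\cdots+u_m)\bigr)\,du
\]
combined with Leibniz' rule $(\psi f)^{(m)}=\sum_{\ell=0}^m\binom{m}{\ell}\psi^{(\ell)}f^{(m-\ell)}$. Since $\psi^{(\ell)}$ is bounded for $\ell\le k$ and $k\ge m$, the task reduces to estimating derivatives of the band-limited $f$. Applying Lemma~\ref{maxunglem} to the rescaled function $g(x):=f(x/b)$ (whose Fourier transform is supported in $[-1,1]$) and undoing the dilation yields the one-dimensional Bernstein/Peetre bound
\[
|f^{(m-\ell)}(x)| \,\lesssim_{m,a}\, b^{m-\ell}\, P_{b,a}f(x), \qquad \ell=0,\dots,m.
\]
Using $b^{m-\ell}\le b^m$ (since $b\ge 1$), together with the translation estimate above applied with $|bh(u_1+\cdots+u_m)|\le m|bh|<m$, one obtains $|\Delta_h^m(\psi f,t)|\lesssim_{m,a,\psi}|h|^m b^m P_{b,a}f(t) = |bh|^m P_{b,a}f(t)$, as desired.

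The two potential obstacles are minor: one is to justify the dilation trick that transfers Lemma~\ref{maxunglem} from the normalized setting $b=(1,\dots,1)$ to arbitrary $b\ge 1$ and produces the correct $b^{m-\ell}$ factor; the other is to verify that the integral representation of $\Delta_h^m$ applies, which is fine because $\psi f$ is a compactly supported $C^m$ function (here it is important that $\psi$ has compact support, otherwise we would only know $\psi f\in C^m$ locally and would have to argue that this suffices).
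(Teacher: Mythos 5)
Your proof is correct and uses the same core tools as the paper: the paper first bounds $|\Delta_h^m(\psi f,t)|$ by $|bh|^m\max\{1,|bh|^a\}P_{b,a}f(t)$ (via mean value theorem/Taylor, Leibniz, and Lemma~\ref{maxunglem} for $b=1$, then dilation) and separately by $\max\{1,|bh|^a\}P_{b,a}f(t)$ (direct expansion), and takes the minimum, whereas you split cases on $|bh|\gtrless 1$ and prove the relevant single bound in each regime, using the iterated-integral representation of $\Delta_h^m$ in place of the mean value theorem and performing the dilation inside the Bernstein-type estimate rather than at the end. These are only presentational differences; the substance — Leibniz' rule, boundedness of $\psi^{(\ell)}$, control of derivatives of band-limited $f$ by $P_{b,a}f$ via Lemma~\ref{maxunglem} and a scaling argument, and the almost translation invariance $P_{b,a}f(t+z)\le(1+|bz|)^a P_{b,a}f(t)$ — is the same.
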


\bproof Assume first that $b=1$. By the mean value theorem from calculus we get 
\beqq
    |\Delta^m_h(\psi f,t)| &\leq& |h|^m \max\{1,|mh|^a\}\sup\limits_{|\xi|\leq
mh}\frac{|(\psi f)^{(m)}(t-\xi)|}{(1+|\xi|)^a}\\
    &\leq& c_{m,a} |h|^m \max\{1,|h|^a\} \sup\limits_{\xi \in \R}\frac{\big|\sum_{j=0}^m
\binom{m}{j}\psi^{(j)}(t-\xi)f^{(m-j)}(t-\xi)\big|}{(1+|\xi|)^a}\,,
\eeqq
where we used Leibniz' rule in the second estimate. Let us define
\begin{equation}\label{cmp}
    c_{m,\psi}:= 2^m\max\limits_{j=0,...,m}\|\psi^{(j)}\|_{\infty}
\end{equation}
and continue with triangle inequality and \eqref{g1} to obtain
\begin{equation}\label{9.38}
 |\Delta^m_h(\psi f)|\leq c_{m,a}\cdot c_{m,\psi}\cdot |h|^m \max\{1,|h|^a\}\cdot \sup\limits_{\xi \in
\R}\frac{|f(t-\xi)|}{(1+|\xi|)^a}\quad,\quad x\in \R.
\end{equation}
What follows is a simple homogeneity argument to deal with $b>1$ and $\supp(\F f) \subset [-b,b]$. Putting
$\psi_{b}:=\psi(\cdot/b)$ and $f_{b}:=f(\cdot /b)$ we get 
$\Delta^m_h(\psi f) = \Delta^m_{bh}(\psi_{b}f_{b})(bt)$. Further, we have
$c_{m,\psi_{b}} \lesssim c_{m,\psi}$ in \eqref{cmp}. Due to $\supp(\F f_{b}) \subset [-1,1]$ we can apply \eqref{9.38}
to obtain for $t\in \R$
\begin{equation}\begin{split}\label{9.39} 
  |\Delta^m_h(\psi f,t)| = |\Delta^m_{bh}(\psi_{b}f_{b},bt)| &\leq 
  c_{m,a}\cdot c_{m,\psi}\cdot |bh|^m\max\{1,|bh|^a\}\cdot \sup\limits_{\xi \in
\R}\frac{|f_{b}(bt-\xi)|}{(1+|\xi|)^a} \\
  &=  c_{m,a}\cdot c_{m,\psi}\cdot |bh|^m\max\{1,|bh|^a\}\cdot \sup\limits_{\xi \in
\R}\frac{|f(t-\xi)|}{(1+|b\xi|)^a}\\
  &= C_{m,a,\psi} |bh|^m\max\{1,|bh|^a\}P_{b,a}f(t)\,.
\end{split}\end{equation}
On the other hand, by \eqref{expdiff}, we observe for $t\in \R$
\begin{equation}\begin{split}\label{9.40}
 |\Delta_h^{m}(\psi f,t)| &\leq  \sum\limits_{j=0}^m\Big|\binom{m}{j}\psi(t+jh)\Big|\cdot |f(t + jh)|\\
      &\leq c_{m,\psi}c_{m,a}\max\{1,|bh|^a\}\sup\limits_{|\xi|\leq mh}\frac{|f(t-\xi)|}{(1+|b\xi|)^a}\\
      &\leq  C_{m,a,\psi}\max\{1,|bh|^a\}P_{b,a}f(t)\,.
\end{split}\end{equation}
Now, \eqref{9.39} together with \eqref{9.40} imply \eqref{help}.\\
\eproof

\noindent
{\bf Proof of Lemma \ref{univ2}.} In the case $\min\{p,\theta\}\leq
1$ we choose $0<\lambda<\min\{p,\theta\}$ and $a>1/\min\{p,\theta\}$ such that $s-(1-\lambda)a>0$. It is
possible since $s>\sigma_{p,\theta}$. If $\min\{p,\theta\}>1$ we put $\lambda=1$. 
Let $\Omega'=\Omega+[-1,1]$ and note that 
$\supp({\wt T}^{\psi}_1f)\subset\supp(\psi)\subset\Omega$ by assumption.
Then, clearly with $m\geq \lfloor s \rfloor+1$ we have
\begin{equation}\begin{split}\label{9.41a}
  \|{\wt T}^{\psi}_1f\|_{F^s_{p,\theta}}^{(m)} &\lesssim \Big\|\Big(\sum\limits_{j\in \N_0}
2^{js\theta}\Big[2^j\int_{-2^{-j}}^{2^{-j}}|\Delta^m_h(\wt
T^{\psi}_1f,\cdot)|\,dh\Big]^{\theta}\Big)^{1/\theta}\Big\|_{L_p(\Omega')}\\
  &\leq   \Big\|\Big(\sum\limits_{j\in \N_0}2^{js\theta}\Big[\sum\limits_{\ell\in
\Z}2^j\int_{-2^{-j}}^{2^{-j}}|\Delta^m_h(\wt
T^{\psi}_1f_{j+\ell},\cdot)|\,dh\Big]^{\theta}\Big)^{1/\theta}\Big\|_{L_p(\Omega')} \\
    &\leq   \Big\|\Big(\sum\limits_{\ell<0}\Big(\sum\limits_{j\in
\N_0}2^{js\theta}\Big[2^j\int_{-2^{-j}}^{2^{-j}}|\Delta^m_h(\wt
T^{\psi}_1f_{j+\ell},\cdot)|\,dh\Big]^{\theta}\Big)^{u/\theta}\Big)^{1/u}\Big\|_{L_p(\Omega')} \\
    &+     \Big\|\Big(\sum\limits_{\ell\geq 0}\Big(\sum\limits_{j\in
\N_0}2^{js\theta}\Big[2^j\int_{-2^{-j}}^{2^{-j}}|\Delta^m_h(\wt
T^{\psi}_1f_{j+\ell},\cdot)|\,dh\Big]^{\theta}\Big)^{u/\theta}\Big)^{1/u}\Big\|_{L_p(\Omega')} 
\end{split}\end{equation}
where $u=\min\{1,\theta\}$. By \eqref{help} we obtain for $\ell<0$ that 
\beqq
  2^j\int_{-2^{-j}}^{2^{-j}}|\Delta^m_h(\wt T^{\psi}_1f_{j+\ell},\cdot)|\,dh  \lesssim 2^{\ell
m}P_{2^{\ell+j},a}f_{j+\ell}(t)
\eeqq
and hence
\beqq
  &&\Big\|\Big(\sum\limits_{\ell<0}\Big(\sum\limits_{j\in
\N_0}2^{js\theta}\Big[2^j\int_{-2^{-j}}^{2^{-j}}|\Delta^m_h(\wt
T^{\psi}_1f_{j+\ell},\cdot)|\,dh\Big]^{\theta}\Big)^{u/\theta}\Big)^{1/u}\Big\|_{L_p(\Omega')}   \\
       &&\qquad\qquad\qquad\leq~ \Big\|\Big( \sum\limits_{\ell < 0}2^{\ell(m-s)} \Big(\sum\limits_{j\in
\N_0}2^{(j+\ell)s\theta}|P_{2^{j+\ell},a}f_{j+\ell}|^{\theta}\Big)^{u/\theta}\Big)^{1/u}\Big\|_{L_p(\Omega')}
\\
&&\qquad\qquad\qquad\lesssim~  \Big\| \Big(\sum\limits_{j\in
\N_0}2^{js\theta}|P_{2^{j},a}f_j|^{\theta}\Big)^{1/\theta}\Big\|_{L_p(\mathbb{T})}\,,       
\eeqq
since the function inside the $L_p$-(quasi-)norm is $1$-periodic. Applying the
periodic version of Theorem \ref{peetremax} with $a>\max\{1/p,1/\theta\}$, see \cite[Thm.\ 3.4.1]{ST}, we obtain
\be
   \Big\|\Big(\sum\limits_{\ell<0}\Big(\sum\limits_{j\in \N_0}2^{js\theta}\Big[2^j\int_{-2^{-j}}^{2^{-j}}|\Delta^m_h(\wt
T^{\psi}_1f_{j+\ell},\cdot)|\,dh\Big]^{\theta}\Big)^{u/\theta}\Big)^{1/u}\Big\|_{L_p(\Omega')} \ \lesssim\
\|f\|_{F^s_{p,\theta}(\mathbb{T})}\,.\label{1d}
\ee
Let us deal with $\ell \geq 0$. We write
$$
    |\Delta_h^m(\psi f_{j+\ell},t)| = |\Delta_h^m(\psi
f_{j+\ell},t)|^{1-\lambda}\cdot |\Delta_h^m(\psi f_{j+\ell},t)|^{\lambda}.  
$$
By \eqref{help} we obtain
$$
    |\Delta_h^m(\psi f_{j+\ell},t)|^{1-\lambda} \lesssim 2^{\ell
a(1-\lambda)}|P_{2^{j+\ell},a}f_{j+\ell}(t)|^{1-\lambda}\,.
$$
This gives 
$$
  2^j\int_{-2^{-j}}^{2^{-j}}|\Delta^m_h(\psi f_{j+\ell},t)|^{\lambda}\,dh \leq
M[|f_{j+\ell}|^{\lambda}](t)\,
$$
and therefore
\beqq
  &&  \sum\limits_{\ell\geq 0}\Big(\sum\limits_{j\in \N_0}2^{js\theta}\Big[2^j\int_{-2^{-j}}^{2^{-j}}|\Delta^m_h(\wt
T^{\psi}_1f_{j+\ell},\cdot)|\,dh\Big]^{\theta}\Big)^{u/\theta} \\
 &&\qquad\qquad\lesssim~ \sum\limits_{\ell\geq0}\Big(\sum\limits_{j\in \N_0}2^{js\theta}2^{\ell
  a(1-\lambda)\theta}\Big[|P_{2^{j+\ell},a}f_{j+\ell}(t)|^{1-\lambda}M[|f_{j+\ell}|^{\lambda}
  ](t)\Big]^{\theta}\Big)^{u/\theta}  \\ 
   &&\qquad\qquad \lesssim~ \sum\limits_{\ell\geq 0}2^{[
       a(1-\lambda)-s]\ell u}\Big(\sum\limits_{j\in
\N_0}2^{(j+\ell)s\theta}\Big[|P_{2^{j+\ell},a}f_{j+\ell}(t)|^{1-\lambda}M[|f_{j+\ell}|^{\lambda}
    ](t)\Big]^{\theta}\Big)^{u/\theta}  \\
    &&\qquad\qquad\lesssim ~\Big(\sum\limits_{j\in
\N_0}2^{js\theta}\Big[|P_{2^{j},a}f_{j}(t)|^{1-\lambda}M[|f_{j}|^{\lambda}
        ](t)\Big]^{\theta}\Big)^{u/\theta} \,.
\eeqq
Plugging this in \eqref{9.41a} and using H\"older's inequality twice with $1/\lambda$ and $1/(1-\lambda)$ gives
\beqq
    &&\Big\|\Big(\sum\limits_{\ell\geq 0}\Big(\sum\limits_{j\in
\N_0}2^{js\theta}\Big[2^j\int_{-2^{-j}}^{2^{-j}}|\Delta^m_h(\wt
T^{\psi}_1f_{j+\ell},\cdot)|\,dh\Big]^{\theta}\Big)^{u/\theta}\Big)^{1/u}\Big\|_{L_p(\Omega')} \\
    &&\qquad\quad\lesssim~\Big\|\Big(\sum\limits_{j\in
    \N_0} 2^{js\theta } \big|P_{2^{j},a}f_j\big|^ {\theta}\Big)^{1/\theta}\Big\|_{L_p(\mathbb{T})}^{1-\lambda}
\cdot \Big\| \Big(\sum\limits_{j\in
    \N_0} \big[2^{js\lambda }   M \big(\big|
P_{2^{j},a}f_j\big|^{\lambda}\big)\big]^{\frac{\theta}{\lambda}}\Big)^{\frac{\lambda}{\theta}}\Big\|_{L_\frac{p}{
\lambda}(\mathbb{T})}.
    \eeqq
Applying the periodic versions of
Theorems \ref{peetremax} and \ref{feffstein} (note that
$\lambda<\min\{p,\theta\}$), see \cite[Prop.\ 3.2.4, Thm.\ 3.4.1]{ST}, we get 
\be
\Big\|\Big(\sum\limits_{\ell\geq 0}\Big(\sum\limits_{j\in
\N_0}2^{js\theta}\Big[2^j\int_{-2^{-j}}^{2^{-j}}|\Delta^m_h(\wt
T^{\psi}_1f_{j+\ell},\cdot)|\,dh\Big]^{\theta}\Big)^{u/\theta}\Big)^{1/u}\Big\|_{L_p(\Omega')} \ \lesssim\
\|f\|_{F^s_{p,\theta}(\mathbb{T})}.\label{2d}
\ee
From \eqref{1d} and \eqref{2d} we obtain the desired estimate. The proof is complete. \\
\qed

\subsection{Proof of Theorems \ref{pointmult-B} and \ref{pointmult-F}}

\noindent In order to prove Theorems \ref{pointmult-B} and \ref{pointmult-F} we need a multivariate
version of Lemma \ref{1dim}.
\begin{lem}\label{ddim}\rm Let $b = (b_1,...,b_d)>0$, $a>0$, $e\subset [d]$, $m \in \N$, $\psi \in C^k_0(\R^d)$ with
$k\geq m$ and $h =
(h_1,...,h_d) \in \R^d$. Let further $f\in
\mathcal{S}'(\R^d)$ with $\supp(\mathcal{F} f) \subset Q_{b}$, where
$$
  Q_{b}:=[-b_1,b_1]\times...\times [-b_d,b_d]\,.
$$
Then there exists a constant $C>0$ (independent of $f$, $b$
and $h$) such that
\begin{equation*}
|\Delta^{m,e}_h (\psi f,x)|\\
\leq C_{m,a,\psi}\prod\limits_{i\in e}\max\{1,|b_ih_i|^a\}\min\{1,|b_ih_i|^m\}
\cdot P_{b,a}^ef(x)
\end{equation*}
holds for all $x\in \R^d$. Here
\beqq
            P_{b,a}^ef(x) := \sup_{y_i \in \R,\, i\in e}
             \frac{|f(y)|}{\prod_{i\in e}(1+|b_i(x_i-y_i)|)^{a}}
\eeqq
denotes the counterpart of \eqref{petfefste} for $e \subset [d]$\,.
\end{lem}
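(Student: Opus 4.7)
The plan is to mimic the proof of Lemma~\ref{1dim} coordinate by coordinate. First I would reduce to the case $b = (1,\ldots,1)$ by the coordinate-wise dilation $x \mapsto (b_1 x_1,\ldots,b_d x_d)$: setting $\psi_b(x) := \psi(x_1/b_1,\ldots,x_d/b_d)$ and $f_b$ analogously, one has $\Delta^{m,e}_h(\psi f,x) = \Delta^{m,e}_{bh}(\psi_b f_b,\,bx)$, $\F f_b$ is supported in $[-1,1]^d$, and the $\psi$-dependent constants remain controlled by $\|\psi\|_{C^k}$. This is the multivariate version of the final homogeneity step in the proof of Lemma~\ref{1dim}.

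Once $b = (1,\ldots,1)$, I would split $e = e_S \cup e_L$ with $e_S := \{i \in e : |h_i| \leq 1\}$ and $e_L := e \setminus e_S$, and factor $\Delta^{m,e}_h = \Delta^{m,e_L}_h \circ \Delta^{m,e_S}_h$ (the operators in different coordinates commute). For the small-$h$ directions $i \in e_S$ I would iterate the univariate B-spline representation of the $m$-th difference used around \eqref{bm}; this writes $\Delta^{m,e_S}_h(\psi f,x)$ as an integral of the mixed derivative $\partial^{m_{e_S}}(\psi f)$ at translates $x+\xi$ with $|\xi_i| \leq m|h_i|$, pulling out the prefactor $\prod_{i \in e_S}|h_i|^m = \prod_{i \in e_S}\min\{1,|h_i|^m\}$. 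For the large-$h$ directions $i \in e_L$ I would use the direct expansion $\Delta^m_{h_i,i}(g,x) = \sum_{k=0}^m (-1)^{m-k}\binom{m}{k}g(x+k h_i e_i)$, producing $O(1)$ many translates with shifts in direction $i$ bounded by $m|h_i|$.

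After swapping the $e_L$-sums with the $e_S$-integrals (by Fubini), I would expand $\partial^{m_{e_S}}(\psi f)$ via Leibniz into a sum over multi-indices $\alpha$ supported on $e_S$ with $|\alpha|_\infty \leq m \leq k$ of products $D^{\alpha}\psi \cdot D^{m_{e_S}-\alpha}f$. The bound $|D^{\alpha}\psi| \leq \|\psi\|_{C^k}$ is pointwise, and Lemma~\ref{maxunglem}, applied only in the $e_S$-coordinates (where the Fourier transform of $f$ has compact support in the corresponding slice), yields $|D^{m_{e_S}-\alpha}f(z)| \lesssim P^{e_S}_{(1,\ldots,1),a}f(z)$ for any fixed $a>0$.

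The remaining and, in my view, main obstacle is the passage from $P^{e_S}_{(1,\ldots,1),a}f$ at the shifted point $z = x+\eta$ back to $P^e_{(1,\ldots,1),a}f(x)$ at the base point. Rewriting any candidate $y$ in the supremum defining $P^{e_S}$ at $x+\eta$ as $y'+\eta'$ and applying $(1+|u+v|)^a \leq (1+|u|)^a(1+|v|)^a$ to the denominator weights delivers
\[
P^{e_S}_{(1,\ldots,1),a}f(x+\eta) \,\lesssim\, \prod_{i \in e_L}(1+|h_i|)^a \cdot P^e_{(1,\ldots,1),a}f(x),
\]
since on $e_S$ one has $|\eta_i| \leq m|h_i| \leq m$ (uniformly bounded) and on $e_L$ the shift is at most $m|h_i|$. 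Using $\max\{1,|h_i|^a\} = 1$ and $\min\{1,|h_i|^m\} = |h_i|^m$ on $e_S$, while $\max\{1,|h_i|^a\} \asymp (1+|h_i|)^a$ and $\min\{1,|h_i|^m\} = 1$ on $e_L$, the collected factors multiply to $\prod_{i \in e}\max\{1,|h_i|^a\}\min\{1,|h_i|^m\}$. Undoing the initial dilation restores the dependence on $b$ and completes the proof.
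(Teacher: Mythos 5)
The proposal is correct and takes essentially the same approach as the paper: the paper's own "proof" of Lemma~\ref{ddim} just says it goes along the lines of Lemma~\ref{1dim} (referring to \cite[Lem.~3.3.2]{Ul06}), and your argument is precisely that tensorization — dilate to reduce to $b=(1,\ldots,1)$, split $e$ into small- and large-$|h_i|$ directions (which is the coordinate-wise analogue of taking the minimum of the two bounds \eqref{9.39} and \eqref{9.40}), use the B-spline/Taylor representation plus Leibniz and a partial-coordinate version of Lemma~\ref{maxunglem} for the small directions, and expand the difference directly for the large ones, collecting the $(1+|h_i|)^a$ factors when transporting the Peetre maximal function back to the base point $x$. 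One small remark: you should note, as in Lemma~\ref{1dim}, that the dilation step controlling $\|\psi_b\|_{C^k}$ by $\|\psi\|_{C^k}$ uses $b_i\ge 1$, which is the relevant range in the applications even though the lemma is stated with $b>0$.
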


\bproof The proof goes along the same lines of the proof of Lemma \ref{1dim},
see also \cite[Lem.\ 3.3.2]{Ul06}\,.
\eproof

\noindent Now we are in position to prove Theorems \ref{pointmult-B} and \ref{pointmult-F}.\\
\vskip 0cm
\noindent
{\bf Proof of Theorem \ref{pointmult-F}.} We will show that with the assumption in Theorem \ref{pointmult-F} we have
\begin{equation}\label{nonpervsper}
    \Big\|\Big(\sum\limits_{j\in \N_{0}^d}
2^{s|j|_1\theta}\mathcal{R}^{e(j)}_m(\wt T^{\psi}_d f,2^{-j},\cdot)^{\theta}\Big)^{
1/\theta } \Big\|_{L_p(\R^d)} \lesssim \|f\|_{\Fspt(\tor)}\quad,\quad f\in
\Fspt(\tor)\,,
\end{equation}
where $\mathcal{R}^{e(j)}_m(\psi f,2^{-j},\cdot)$ is defined in
\eqref{rectm}. 
Let us first prove the case
$\min\{p,\theta\}\leq 1$. We choose $0<\lambda<\min\{p,\theta\}$ and $a>1/\min\{p,\theta\}$ as in the proof of
Lemma \ref{univ2}. Using the decomposition of unity, 
$\supp(\psi) \subset \Omega$ and $\Omega'=\Omega+[-1,1]^d$ we get
\beqq 
    &&\Big\|\Big(\sum\limits_{j\in \N_0^d}2^{|j|_1s\theta}\mathcal{R}^{e(j)}_m( \wt T^{\psi}_d
f,2^{-j},\cdot)^{\theta}\Big)^{1/\theta}\Big\|_p\nonumber\\
    &&\qquad\qquad\qquad\leq~  \Big\|\sum\limits_{\ell\in \Z^d}\Big(\sum\limits_{j\in
\N_0^d}2^{|j|_1s\theta}\mathcal{R}^{e(j)}_m( \psi
f_{j+\ell},2^{-j},\cdot)^{\theta}\Big)^{1/\theta}\Big\|_{L_p(\Omega')}\,,\label{start3}
\eeqq
see \eqref{start2}. For simplicity we assume that $\theta\geq 1$. Note, that $p<1$ is still possible here. The
modifications in case $\theta<1$ are straight-forward, see the proof of Lemma \ref{univ2}. From
\eqref{rectm} we have for $x\in \R^d$
\beqq
2^{|j|_1s}\mathcal{R}^{e(j)}_m( \psi f_{j+\ell},2^{-j},x) 
 &\lesssim &2^{|j|_1s}\int_{\Gamma_j^{e(j)}} 
     \big|\Delta_{h}^{m,e(j)} (\psi f_{j+\ell},x)\big|\prod_{i\in e(j)}2^{j_i}dh_i\,,
\eeqq
where $\Gamma_j^{e(j)}$, $e_1(j,\ell)$ and $e_2(j,\ell)$ are defined above in \eqref{Gammaj} and \eqref{ejl}\,.
Lemma \ref{ddim} yields
\beq
\Delta_{h}^{m,e(j)} (\psi f_{j+\ell},x)& = &\Delta_{h}^{m,e_1(j,\ell)}  \circ \Delta_{h}^{m,e_2(j,\ell)} (\psi
f_{j+\ell},x)\nonumber\\
&\lesssim &\Big( \prod_{i\in e_2(j,\ell)} |2^{j_i+\ell_i}h_i|^m\Big) \Delta_{h}^{m,e_1(j,\ell)}
(P_{2^{j+\ell},a}^{e_2(j,\ell)}f_{j+\ell},x)\,.\label{d1}
\eeq
Applying Lemma \ref{ddim} again, see also \cite[Lemma 1.12]{TDiss}, but this
time with the components in $e_1(j,\ell)$, we get
\beqq
&&\big|\Delta_{h}^{m,e_1(j,\ell)} (P_{2^{j+\ell},a}^{e_2(j,\ell)}f_{j+\ell},x)\big|\\
& &\qquad\qquad= ~\big|\Delta_{h}^{m,e_1(j,\ell)} (P_{2^{j+\ell},a}^{e_2(j,\ell)}f_{j+\ell},x)\big|^{1-\lambda}\cdot
\big| \Delta_{h}^{m,e_1(j,\ell)} (P_{2^{j+\ell},a}^{e_2(j,\ell)}f_{j+\ell},x)\big|^{\lambda}\\
&&\qquad\qquad\lesssim  ~  \Big[\Big(\prod_{i\in e_1(j,\ell)} |2^{j_i+\ell_i}h_i|^{a}\Big)
P_{2^{j+\ell},a}^{e(j)}f_{j+\ell}(x)\Big]^{1-\lambda}\cdot \big| \Delta_{h}^{m,e_1(j,\ell)}
(P_{2^{j+\ell},a}^{e_2(j,\ell)}f_{j+\ell},x)\big|^{\lambda}\,.
\eeqq
Inserting this into \eqref{d1} and performing the integration yields
\beqq
&& 2^{|j|_1s}\mathcal{R}^{e(j)}_m( \psi f_{j+\ell},2^{-j},x)\\
&&\lesssim\,2^{|j|_1s}\Big(\prod_{i\in e_2(j,\ell)} 2^{\ell_im}\Big)\Big(\prod_{i\in e_1(j,\ell)} 2^{\ell_i
a(1-\lambda)} \Big) \big[P_{2^{j+\ell},a}^{e(j)}f_{j+\ell}(x)\big]^{1-\lambda}\cdot   M_{e_1(j,\ell)}\big(\big|
P_{2^{j+\ell},a}^{e_2(j,\ell)}f_{j+\ell}(x)\big|^{\lambda}\big)\\
&&\lesssim\, 2^{|j|_1s}\Big(\prod_{i\in e_2(j,\ell)} 2^{\ell_im}\Big)\Big(\prod_{i\in e_1(j,\ell)} 2^{\ell_i
a(1-\lambda)} \Big) \big[P_{2^{j+\ell},a}f_{j+\ell}(x)\big]^{1-\lambda}\cdot   M_{[d]} \big(\big|
P_{2^{j+\ell},a}f_{j+\ell}(x)\big|^{\lambda}\big)\,.
\eeqq
Observe that
\beqq
&& 2^{|j|_1s}\Big(\prod_{i\in e_2(j,\ell)} 2^{\ell_im}\Big)\Big(\prod_{i\in e_1(j,\ell)} 2^{\ell_i a(1-\lambda)} \Big)\\
&&\qquad\qquad\qquad=~ 2^{|j+\ell|_1s}\Big(\prod_{i\in e_2(j,\ell)} 2^{\ell_i(m-s)}\Big)\Big(\prod_{i\in e_1(j,\ell)}
2^{\ell_i [a(1-\lambda)-s]} \Big)\Big(\prod_{i\in e_0(j)}2^{-\ell_is}\Big)\\
&&\qquad\qquad\qquad\leq~2^{|j+\ell|_1s}\Big(\prod_{i\in e_2(j,\ell)} 2^{\ell_i(m-s)}\Big)\Big(\prod_{i\in e_1(j,\ell)\cup 
 e_0(j)}2^{-\ell_i\delta}\Big)\\
&&\qquad\qquad\qquad=~2^{|j+\ell|_1s}\Big(\prod_{i:\,\ell_i<0} 2^{\ell_i(m-s)}\Big)\Big(\prod_{i:\,\ell_i\geq
0}2^{-\ell_i\delta}\Big)\,,
\eeqq
where $\delta =\min\{s-a(1-\lambda),s\}>0$. This allows us to estimate
\beqq
&&\sum\limits_{\ell\in \Z^d}\Big(\sum\limits_{j\in
\N_0^d}2^{|j|_1s\theta}\mathcal{R}^{e(j)}_m( \psi f_{j+\ell},2^{-j},\cdot)^{\theta}\Big)^{1/\theta}\\
&&\qquad\qquad\qquad\qquad \lesssim~ \Big(\sum\limits_{j\in
\N_0^d} 2^{|j|_1s\theta } \Big[\big|P_{2^{j},a}f_j(x)\big|^{1-\lambda}\cdot   M_{[d]} \big(\big|
P_{2^{j},a}f_j(x)\big|^{\lambda}\big)\Big]^{\theta}\Big)^{1/\theta}\,.
\eeqq
Applying H\"older's inequality twice with the pair $(1/(1-\lambda),1/\lambda)$ and use afterwards the periodic
version of Theorem \ref{feffstein}, see \cite[Thm.\ 4.1.2]{Ul06} and \cite[Prop.\ 1.3]{TDiss}, we arrive at 
\beqq
 &&\Big\|\sum\limits_{\ell\in \Z^d}\Big(\sum\limits_{j\in
 \N_0^d}2^{|j|_1s\theta}\mathcal{R}^{e(j)}_m( \psi
f_{j+\ell},2^{-j},\cdot)^{\theta}\Big)^{1/\theta}\Big\|_{L_p(\Omega')}\\
&&~\lesssim~\Big\|\Big(\sum\limits_{j\in
\N_0^d} 2^{|j|_1s\theta } \big|P_{2^{j},a}f_j\big|^ {\theta}\Big)^{1/\theta}\Big\|_{L_p(\tor)}^{1-\lambda} \cdot
\Big\| \Big(\sum\limits_{j\in
\N_0^d} \big[2^{|j|_1s\lambda }   M_{[d]} \big(\big|
P_{2^{j},a}f_j\big|^{\lambda}\big)\big]^{\frac{\theta}{\lambda}}\Big)^{\frac{\lambda}{\theta}}\Big\|_{L_\frac{p}{
\lambda}(\tor)}\\
& &~\lesssim~\Big\|\Big(\sum\limits_{j\in
\N_0^d} 2^{|j|_1s\theta } \big|P_{2^{j},a}f_j\big|^ {\theta}\Big)^{1/\theta}\Big\|_{L_p(\tor)}\,,
\eeqq
since the function inside the $L_p$-(quasi-)norm is $1$-periodic. The periodic version of Theorem \ref{peetremax}, see
\cite[Thm.\ 4.1.3]{Ul06} and \cite[Prop.\ 1.5]{TDiss}, implies the inequality
\eqref{nonpervsper}. The case $\min\{p,\theta\}>1$ is less technical. We choose $\lambda=1$. Next we take the
integral in \eqref{d1} and then estimate above by the Hardy-Littlewood maximal function according to the directions in
$e_1(j,\ell)$. The rest is carried out similar to the case $\min\{p,\theta\}\leq 1$. The proof is complete.\\
\qed
\vskip 0.5cm
\begin{rem} The proof of Theorem \ref{pointmult-B} (${\mathbf B}$-case) is similar but
technically less involved. As in the ${\mathbf F}$-case we choose $0<\lambda<p$ and $a>1/p$ such that
$s-a(1-\lambda)>0$ if $p\leq 1$ otherwise we put $\lambda=1$. Again, one can do so because of $s>\sigma_p$. Note, that
the weaker condition $s>\sigma_p$ comes
from the fact that the Peetre maximal function \eqref{petfefste} is bounded in
$L_p(\tor)$, $0<p<\infty$, if $a>1/p$, which is the scalar version of Theorem \ref{peetremax}\,.
\end{rem}

\subsection*{Acknowledgments}
\begin{sloppypar}
The authors acknowledge the fruitful discussions with D. Bazarkhanov, W. Sickel, V.N. Temlyakov, H. Triebel, and Dinh
D\~ung on this topic, especially at the ICERM Semester Programme ``High-Dimensional Approximation'' in Providence, 2014,
where this project has been initiated, and at the conference ``Approximation Methods and Function Spaces'' in
Hasenwinkel, 2015. The authors would particularly thank D. Bazarkhanov for pointing out reference \cite{Du2} to us.
Moreover, Tino Ullrich gratefully acknowledges support by the German Research Foundation (DFG) and the Emmy-Noether
programme, Ul-403/1-1.
\end{sloppypar}


\end{document}